\theoremstyle{definition}
\newtheorem {definition} {Definition} [section]
\newtheorem {defn} [definition] {Definition}
\newtheorem {lemma} [definition] {Lemma}
\newtheorem {theorem} [definition] {Theorem}
\newtheorem* {theorem*} {Theorem}
\newtheorem {proposition} [definition] {Proposition}
\newtheorem {corollary} [definition] {Corollary}
\newtheorem {fact} [definition] {Fact}
\theoremstyle{remark} 
\newtheorem*{remark} {Remark}
\DeclareMathOperator {\acl} {acl}
\DeclareMathOperator {\dcl} {dcl}
\newcommand {\etp} {\operatorname {tp}_{\exists}}
\newcommand{\ACF}{\ensuremath{\mathrm{ACF}}}
\DeclareMathOperator {\I} {I}
\DeclareMathOperator {\eacl} {\acl_{\exists}}
\DeclareMathOperator {\edcl} {\dcl_{\exists}}
\DeclareMathOperator {\Diag} {Diag}
\newcommand {\liff} {\leftrightarrow}
\DeclareMathOperator {\etheory} {Th_\exists}
\newcommand {\EA} {\mathrm {EA}}
\newcommand {\sopone} {\mathrm {SOP_1}}
\newcommand {\nsopone} {\mathrm {NSOP_1}}
\newcommand {\tptwo} {\mathrm {TP_2}}
\DeclareMathOperator {\Aut} {Aut}
\newcommand {\G} {\mathbb {G}}
\newcommand {\Q} {\mathbb {Q}}
\newcommand {\N} {\mathbb {N}}
\newcommand{\ga}{\ensuremath{\mathbb G}_{\mathrm{a}}}
\newcommand{\gm}{\ensuremath{\mathbb G}_{\mathrm{m}}}
\newcommand {\forksym} {\raise0.2ex\hbox{\ooalign{\hidewidth$\vert$\hidewidth\cr\raise-0.9ex\hbox{$\smile$}}}}
\def\@forksim@#1#2#3#4{\mathrel{\mathop{\forksym}\displaylimits_{#2}^{#4}}}
\def\@forksym@#1#2{\@ifnextchar^{\@forksim@#1#2}{\mathrel{\mathop{\forksym}\displaylimits_{#2}}}}
\def\nforks{\@ifnextchar_{\@forksym@}{\forksym}}
\newcommand {\forks} {\not \nforks}
\DeclareMathOperator{\td}{td}
\DeclareMathOperator{\Emb}{Emb}
\newcommand{\ECcat}{\ensuremath{\mathcal{EC}}}
\newcommand{\Lering}{\ensuremath{L_{\mathrm{E\text{-}ring}}}}
\newcommand{\Tefield}{\ensuremath{T_{\mathrm{E\text{-}field}}}}
\newcommand{\tuple}[1]{\ensuremath{\langle #1 \rangle}}
\renewcommand{\le}{\leqslant}
\newcommand{\subs}{\subseteq}
\newcommand{\M}{\mathfrak M}
\newcommand{\entails}{\vdash}
\newcommand{\alg}{\ensuremath{\mathrm{alg}}}
\newcommand{\EAgen}[1]{\ensuremath{\langle #1 \rangle^{\EA}}}
\newcommand{\minus}{\smallsetminus}
\newcommand{\abar}{{\ensuremath{\bar{a}}}}
\newcommand{\bbar}{{\ensuremath{\bar{b}}}}
\newcommand{\dbar}{{\ensuremath{\bar{d}}}}
\newcommand{\xbar}{{\ensuremath{\bar{x}}}}
\newcommand{\ybar}{{\ensuremath{\bar{y}}}}
\newcommand{\JEPrefinement}{JEP-refinement}
\newcommand{\comp}[1]{\widehat {#1}}
\def\@hangfrom#1{\setbox\@tempboxa\hbox{{#1}}%
      \hangindent 1.5em
      \noindent\box\@tempboxa}
\title{Existentially Closed Exponential Fields}
\author{Levon Haykazyan and Jonathan Kirby}
\begin{document}
\maketitle

\begin{abstract}

 We characterise the existentially closed models of the theory of exponential fields. They do not form an elementary class, but can be studied using positive logic. We find the amalgamation bases and characterise the types over them.
We define a notion of independence and show that independent systems of higher
dimension can also be amalgamated. We extend some notions from classification theory to positive logic and position the category of existentially closed exponential fields in the stability hierarchy as $\nsopone$ but $\tptwo$.
\end{abstract}

\section {Introduction}

An {\em exponential field} is a field $F$ with a homomorphism $E$ from its additive
group $\G_a(F)$ to its multiplicative group $\G_m(F)$.
 Exponential fields are axiomatised by an inductive theory
$\Tefield$.


The classical approach to the model theory of algebraic structures begins by
identifying extensions and finding the model companion. Such an approach for
exponential algebra was initiated by \cite{van-den-dries-erings}. However there
is no model companion which traditionally meant a dead end for model theory. So
the model theory of exponentiation developed in other directions, notably
\cite{wilkie-expansions} proved that the real exponential field is model complete and
\cite{zilber-pseudo-exp} studied exponential fields which are \emph{exponentially closed}, which is similar to existential closedness but only takes account of so-called \emph{strong} extensions.

However, it is possible to study the model theory and stability theory of the
existentially closed models of an inductive theory even when there is no model
companion. \cite{Shelah_lazy} called this setting model theory of \emph{Kind~3},
where Kind~1 is the usual setting of a complete first-order theory, and Kind~2,
also known as \emph{Robinson theories}, is like Kind~3 but where there is
amalgamation over every subset of every model. (There is also Kind~4, which is
now known as homogeneous abstract elementary classes.) More recently there has
been interest in positive model theory, which is only slightly more general
than Kind~3. 

This is the approach taken in this paper, and it means that we look at embeddings and existentially definable sets instead of elementary embeddings and all first-order definable sets. Following \cite{pillay-ec-forking} we call it the \emph{Category of existentially closed models} rather than Kind~3.

\subsection*{Overview of the paper}

In Section~2 we give the necessary background on the category $\ECcat(T)$ of existentially closed models of an inductive theory $T$.

In Section~3 we characterise the existentially closed models of $\Tefield$ in Theorem
\ref{acfe-axioms}. They are the exponential fields $F$ where each additively free
subvariety $V$ of $\G_a(F)^n \times \G_m(F)^n$ has an exponential point, that is,
there is a tuple $\bar a \in F$ such that $(\bar a, E(\bar a)) \in V$. Additive
freeness of a variety, however, is not a first-order property and we show that
the class of existentially closed exponential fields is not axiomatisable by a first-order theory.

The theory $\Tefield$ of exponential fields has neither the joint embedding
property nor the amalgamation property. In Section~4 we show that the amalgamation holds if and only if
the base is algebraically closed (Theorem \ref{2-amalgamation}). The absence of
the joint embedding property means that the category of exponential fields and
embeddings splits into a disjoint union of classes where any two structures in
the same class can be jointly embedded into a third one. We show that there are
$2^{\aleph_0}$ classes and characterise the theories of each class (Corollary
\ref{completion-corollary}).

In Section~5 we define a notion of independence: if $F$ is an algebraically closed
exponential field, $A, B, C \subseteq F$, then we say that $A$ is independent of
$B$ over $C$ and write $A \nforks_C B$ if $\EAgen{AC}$ and $\EAgen{BC}$ are
field-theoretically independent over $\EAgen{C}$. Here $\EAgen{X}$ is the
smallest algebraically closed exponential subfield containing $X$. We then show that
independent systems of higher dimension can be amalgamated (Theorem
\ref{n-amalgamation}). In particular, we have 3-amalgamation for independent systems, which is sometimes known as \emph{the independence theorem}.

In Section~6 we study two tree properties of formulas in $\Tefield$: the
tree property of the second kind, $\tptwo$, and the $1$-strong order property,
$\sopone$.  Both properties are well-known and extensively studied for complete first-order theories.
Our definitions are variants that specialise to the
appropriate definitions for model-complete theories. We show that the category $\ECcat(\Tefield)$ has
$\tptwo$ (Proposition \ref{have-tptwo}) by exhibiting a $\tptwo$-formula. We then show that the independence notion we defined is well-enough behaved to apply the analogue of a theorem of Chernikov and Ramsey, and deduce that $\ECcat(\Tefield)$ is $\nsopone$ (Theorem
\ref{no-sopone}). 

In an appendix, we give the necessary technical details of this generalised stability theory for the category of existentially closed models of an inductive theory. In particular, we  adapt the work of \cite{chernikov-ramsey-tree-properties} on $\nsopone$-theories.

\subsection* {Acknowledgement}

The first named author would like to thank R\'emi Jaoui and Rahim Moosa for many
discussions around the topics of this paper.


\section{Model-theoretic background}

We give some background on the model theory of the category of existentially closed models of a (usually incomplete) inductive first-order theory. 
More details can be found in \cite{hodges-big}, \cite{pillay-ec-forking} and
\cite{ben-yaacov-poizat-positive}. The only novelty in this section is the
notion of a \JEPrefinement\  of an
inductive theory $T$ (Definition \ref{jep-refinement-defn}), which is a useful syntactic counterpart to the choice of a
monster model.

\begin{remark}
The readers familiar with positive logic will notice that the axioms for
exponential fields are not only inductive, but {\em h-inductive} in the sense of
\cite{ben-yaacov-poizat-positive}. So we can treat exponential fields in
positive logic, which amounts to working with homomorphisms rather than
embeddings. In our case this makes no difference, since homomorphisms of fields
are embeddings. This is the case whenever all atomic formulas have positive
negations modulo the theory. So the setting here is equivalent to positive model
theory, with the extra assumption that all atomic formulas have positive
negations modulo the theory. This extra assumption, however, is not essential,
and everything in this section applies to a general h-inductive theory (although
the treatment of algebraic and definable closures would require more care).
\end{remark}

\subsection{Existentially closed models of an inductive theory}

Recall that a first-order theory $T$ is \emph{inductive} if the union of a chain
of models is a model. Equivalently, $T$ is axiomatised by
$\forall\exists$-sentences. We work in the category $\Emb(T)$ whose objects are
models of $T$ and whose arrows are embeddings. 

\begin{defn}
A model $M \models T$ is \emph{existentially closed} if for all quantifier-free formulas $\phi(\xbar,\ybar)$ and all $\abar$ in $M$, if there is an extension $M \subs B$ such that $B \models T$ and $B \models \exists\xbar \phi(\xbar,\abar)$ then $M \models \exists\xbar \phi(\xbar,\abar)$.
\end{defn}

If $T$ is inductive then for any $A \models T$ it is straightforward to build an extension $A \subseteq M$ by a transfinite induction process such that $M$ is an existentially closed model of $T$.

Embeddings between structures preserve $\exists$-formulas. That is if $f : A \to
B$ is an embedding, then for any $\exists$-formula $\phi(\bar x)$ and $\bar a
\in A$ we have 
\[A \models \phi(\bar a) \implies B \models \phi(f(\bar a)).\]
If the converse of the above implication holds too, then the embedding is called
an {\em immersion}.

Given $A \models T$ and $\abar$ a tuple from $A$, the existential type $\etp^A(\abar)$ is the set of existential formulas $\phi(\xbar)$ such that $A \models \phi(\abar)$. If $A \subs B$ then $\etp^A(\abar) \subs \etp^B(\abar)$, and if the inclusion is an immersion we have equality.

The following well-known equivalent characterisations of existentially closed models are useful.
\begin{fact}\label{ec models fact}
Let $T$ be an inductive theory and let $M \models T$. Then the following conditions are equivalent.
\begin{enumerate}[(i)]
\item $M$ is an existentially closed model of $T$.
\item For any model $B \models T$ any embedding $f : M \to B$ is an immersion.
\item For every $\bar a \in M$, the $\exists$-type $\etp^M(\bar a)$ is maximal, that is, if $B \models T$ and $\bbar \in B$ are such that $\etp^M(\abar) \subs \etp^B(\bbar)$ then $\etp^B(\bbar) = \etp^M(\abar)$.
\item For every $\bar a \in M$ and $\exists$-formula $\phi(\bar x)$ such that $M
\models \lnot \phi(\bar a)$, there is an $\exists$-formula $\psi(\bar x)$ such
that $M \models \psi(\bar a)$ and $T \models \lnot \exists \bar x (\phi(\bar x)
\land \psi(\bar x))$.
\end{enumerate}
\end{fact}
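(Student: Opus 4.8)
The plan is to run the cycle of implications $(i) \Rightarrow (ii) \Rightarrow (iii) \Rightarrow (iv) \Rightarrow (i)$; only the step $(ii) \Rightarrow (iii)$ requires a genuine construction, the other three being short unwindings of the definitions together with compactness.

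For $(i) \Rightarrow (ii)$: if $B \models T$ and $f \colon M \to B$ is an embedding, identify $M$ with its image. An $\exists$-formula satisfied by a tuple $\abar$ from $M$ in $B$ has the shape $\exists\xbar\,\theta(\xbar,\abar)$ with $\theta$ quantifier-free, and existential closedness of $M$, applied to $\theta$ with parameters $\abar$, transfers it down to $M$; hence $f$ is an immersion.

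For $(ii) \Rightarrow (iii)$: suppose $\etp^M(\abar) \subs \etp^B(\bbar)$ with $B \models T$. I would first produce a model $C \models T$ with embeddings $g \colon M \to C$ and $h \colon B \to C$ satisfying $g(\abar) = h(\bbar)$. Work in the language obtained by adding a new constant for every element of $M$ and of $B$, but with the constants naming $\bbar$ identified with those naming $\abar$, and verify by compactness that $T \cup \Diag(M) \cup \Diag(B)$ is consistent: a finite fragment amounts to a quantifier-free description over $\abar$ of finitely many elements of $M$ --- which packages as an $\exists$-formula lying in $\etp^M(\abar) \subs \etp^B(\bbar)$ --- together with a quantifier-free fact over $\bbar$ about finitely many elements of $B$, and the first can be witnessed inside $B$, so the fragment holds in a suitable expansion of $B$. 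Any model $C$ of this theory supplies the maps $g, h$. Now $g$ is an immersion by $(ii)$ while $h$, being an embedding, preserves $\exists$-formulas, so every $\exists$-formula in $\etp^B(\bbar)$ holds of $g(\abar) = h(\bbar)$ in $C$, hence of $\abar$ in $M$; thus $\etp^B(\bbar) \subs \etp^M(\abar)$, giving equality. This amalgamation construction is the one step I expect to be the main obstacle; the rest is bookkeeping.

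For $(iii) \Rightarrow (iv)$: argue contrapositively. If no $\psi$ as in $(iv)$ exists, then $T \cup \{\exists\xbar(\phi(\xbar)\land\psi(\xbar))\}$ is satisfiable for every $\exists$-formula $\psi$ with $M \models \psi(\abar)$; since $\exists$-formulas are closed under conjunction, compactness yields $B \models T$ and $\bbar \in B$ with $B \models \phi(\bbar)$ and $\etp^M(\abar) \subs \etp^B(\bbar)$, and then $(iii)$ forces $M \models \phi(\abar)$, contradicting $M \models \lnot\phi(\abar)$. For $(iv) \Rightarrow (i)$: if $M \subs B \models T$ and $B \models \exists\xbar\,\theta(\xbar,\abar)$ with $\theta$ quantifier-free while $M \not\models \exists\xbar\,\theta(\xbar,\abar)$, apply $(iv)$ to $\phi(\ybar) := \exists\xbar\,\theta(\xbar,\ybar)$ and $\abar$ to obtain an $\exists$-formula $\psi(\ybar)$ with $M \models \psi(\abar)$ and $T \models \lnot\exists\ybar(\phi(\ybar)\land\psi(\ybar))$; but $\psi(\abar)$ lifts to $B$ and $\phi(\abar)$ already holds there, contradicting $B \models T$. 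Hence $M$ is existentially closed, closing the cycle.
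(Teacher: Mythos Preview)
Your proof is correct. The paper does not argue this fact itself but defers to Hodges for $(i)\Leftrightarrow(ii)$ and $(i)\Leftrightarrow(iv)$ and calls $(ii)\Leftrightarrow(iii)$ ``immediate''; the arguments you give are the standard ones those citations unwind to. The one point worth remarking on is the mismatch in perceived difficulty: you flag $(ii)\Rightarrow(iii)$ as the only step needing a genuine construction, while the paper calls it immediate. Both views are defensible --- your diagram-amalgamation argument (showing $T\cup\Diag(M)\cup\Diag(B)$ is finitely satisfiable in $B$ once the constants for $\abar$ and $\bbar$ are identified, using $\etp^M(\abar)\subseteq\etp^B(\bbar)$ to absorb the $M$-part into $B$) is exactly the standard move, and to anyone fluent in the method of diagrams it is a one-liner; but you are right that it is the only place where something beyond pure definition-chasing and compactness happens.
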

\begin{proof}
The equivalence of (i) and (ii) is \cite[Theorem~8.5.6]{hodges-big}, and the equivalence of (i) and (iv) is \cite[Corollary~3.2.4]{hodges-bmg}. The equivalence of (ii) and (iii) is immediate.
\end{proof}

We write $\ECcat(T)$ for the full subcategory of $\Emb(T)$ consisting of the existentially closed models of $T$ and all embeddings between them (which by the above fact are immersions).

The model theory and stability theory of existentially closed models of an inductive $T$ is developed analogously to that of a complete first-order theory $T'$. Existentially-closed models of $T$ correspond to models of $T'$; immersions correspond to elementary embeddings, and models of $T$ correspond to substructures of models of $T'$.

Note however that the category $\ECcat(T)$ does not determine the theory $T$
completely. We recall the following well-known equivalence.

\begin{fact}
\label{companion-fact}
For two inductive theories $T_1$ and $T_2$ the following are equivalent.
\begin{enumerate}[(i)]
\item The theories $T_1$ and $T_2$ have the same universal consequences.
\item Every model of $T_1$ embeds in a model of $T_2$ and vice-versa.
\item The existentially closed models of $T_1$ and $T_2$ are the same.
\end{enumerate}
\end{fact}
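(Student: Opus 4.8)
The three conditions are classical, and the plan is to prove the cycle (i)$\Rightarrow$(ii)$\Rightarrow$(iii)$\Rightarrow$(i), or equivalently to establish (i)$\Leftrightarrow$(ii) and (ii)$\Leftrightarrow$(iii) separately. The key background fact I would isolate first is the standard one: a structure $A$ embeds into a model of an arbitrary theory $T$ if and only if $A \models T_\forall$, where $T_\forall$ is the set of universal consequences of $T$. This is proved by the usual diagram-plus-compactness argument ($T \cup \Diag(A)$ is consistent iff no finite conjunction from $\Diag(A)$ refutes $T$, and a finite conjunction of literals refutes $T$ precisely when its universal closure, negated, is a consequence of $T$). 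Together with the observation that universal sentences are inherited by substructures, this gives (i)$\Leftrightarrow$(ii) immediately: if $(T_1)_\forall = (T_2)_\forall$ then any model of $T_1$ satisfies $(T_2)_\forall$ and hence embeds in a model of $T_2$, and conversely a universal consequence of $T_2$ holds in every model of $T_1$ because each such model sits inside a model of $T_2$.

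For (iii)$\Rightarrow$(ii), which is the easy half of the second equivalence, I would take any $A \models T_1$ and, using inductivity of $T_1$, embed $A$ into an existentially closed model $M$ of $T_1$; by hypothesis $M$ is then an existentially closed model of $T_2$, in particular $M \models T_2$, so $A$ embeds into a model of $T_2$. The symmetric argument finishes it.

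The substantive step is (ii)$\Rightarrow$(iii). Given an existentially closed model $M$ of $T_1$, I would use (ii) repeatedly to build an alternating chain $M = A_0 \subseteq B_1 \subseteq A_1 \subseteq B_2 \subseteq \cdots$ with each $A_i \models T_1$ and each $B_i \models T_2$, and let $N$ be the union. Since both theories are inductive, $N \models T_1$ and $N \models T_2$. Because $M$ is existentially closed and $M \subseteq N \models T_1$, the inclusion $M \hookrightarrow N$ is an immersion by Fact~\ref{ec models fact}. Now $M \models T_2$: for each axiom $\forall\ybar\,\exists\xbar\,\theta(\xbar,\ybar)$ of $T_2$ and each $\bbar$ in $M$, the sentence $\exists\xbar\,\theta(\xbar,\bbar)$ holds in $N$, hence holds in $M$ by immersiveness. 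Finally $M$ is existentially closed for $T_2$: if $M \subseteq B \models T_2$ and $B \models \exists\xbar\,\phi(\xbar,\abar)$ with $\phi$ quantifier-free and $\abar$ in $M$, apply (ii) to embed $B$ into some $B' \models T_1$; then $B' \models \exists\xbar\,\phi(\xbar,\abar)$ since existential formulas persist upward, and since $M \subseteq B' \models T_1$ with $M$ existentially closed for $T_1$ we get $M \models \exists\xbar\,\phi(\xbar,\abar)$. By symmetry the existentially closed models of $T_1$ and $T_2$ coincide.

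I expect the main obstacle to be exactly this last direction, and within it the point that requires care is upgrading ``$M$ embeds into a model of $T_2$'' to ``$M$ is itself a model of $T_2$'': a single application of (ii) does not suffice, which is why the alternating chain (exploiting inductivity on both sides) is needed, after which the existential-closedness of $M$ does the rest via immersiveness.
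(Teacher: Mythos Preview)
Your proof is correct. The paper itself does not give an argument here: it simply records (i)$\Leftrightarrow$(ii) as well known and cites Hodges for (i)$\Leftrightarrow$(iii). What you have written is precisely the standard material behind those citations --- the diagram/compactness characterisation of $T_\forall$ for (i)$\Leftrightarrow$(ii), and the alternating-chain argument for (ii)$\Rightarrow$(iii) --- so there is no real divergence in approach, only a difference in level of detail.
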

\begin{proof}
The equivalence of (i) and (ii) is well known. For the equivalence of (i) and
(iii) see \cite[Theorem 3.2.3]{hodges-bmg}.
\end{proof}

Theories satisfying these equivalent conditions are called {\em companions}.
Thus $\ECcat(T)$ (as a subcategory of all $L$-structures and embeddings)
determines $T$ only modulo companions.

\subsection{Amalgamation bases}

\begin{defn}
An \emph{amalgamation base} for $\Emb(T)$ is a model $A \models T$ such that given any two models $B_1, B_2 \models T$ and embeddings
$f_1 : A \to B_1$ and $f_2 : A \to B_2$ there is a model $C \models T$ and
embeddings $g_1 : B_1 \to C$ and $g_2 : B_2 \to C$ such that $g_1f_1 = g_2f_2$.

$A$ is a \emph{disjoint amalgamation base} if furthermore we can pick the embeddings $g_1$ and $g_2$ in such a way that  $g_1(B_1) \cap g_2(B_2) = g_1f_1(A)$. (Some authors including Hodges call this a \emph{strong amalgamation base}.)
\end{defn}
\begin{fact}[{\cite[Corollary~8.6.2]{hodges-big}}]
Every existentially closed model of an inductive theory $T$ is a disjoint amalgamation base for $\Emb(T)$.
\end{fact}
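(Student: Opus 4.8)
The plan is to run the standard Robinson-style consistency argument that underlies the cited result, the only genuinely model-theoretic input being the immersion property of existentially closed models. First I would replace $B_1$ and $B_2$ by isomorphic copies so that $f_1$ and $f_2$ become inclusions $M \subs B_1$, $M \subs B_2$, and arrange that $(B_1 \setminus M) \cap (B_2 \setminus M) = \emptyset$. Introducing a new constant for each element of $B_1 \cup B_2$, with those naming $M$ shared between the two diagrams, I would form
\[ \Sigma \;=\; T \;\cup\; \Diag(B_1) \;\cup\; \Diag(B_2) \;\cup\; \{\, b \ne c \;:\; b \in B_1 \setminus M,\ c \in B_2 \setminus M \,\}. \]
Any model $C \models \Sigma$ gives, via the interpretations of these constants, embeddings $B_1 \to C$ and $B_2 \to C$ that agree on $M$ and whose images meet exactly in the image of $M$; composing with $f_1$ and $f_2$ then yields the desired disjoint amalgam. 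So the task reduces to showing that $\Sigma$ is consistent.

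By compactness it suffices to show that every finite subset of $\Sigma$ is consistent, and (after collecting constants and enlarging conjunctions) each such subset is implied by a set of the form $T \cup \{\, \delta_1(\abar,\bbar),\ \delta_2(\abar,\cbar),\ \bigwedge_{i,j} b_i \ne c_j \,\}$, where $\abar$ is a finite tuple from $M$, $\bbar$ from $B_1 \setminus M$, $\cbar$ from $B_2 \setminus M$, $\delta_1$ is a quantifier-free formula satisfied by $\abar\bbar$ in $B_1$, and $\delta_2$ is one satisfied by $\abar\cbar$ in $B_2$. Here is the key step. Since $B_2 \models \exists \zbar\, \delta_2(\abar,\zbar)$, this existential formula lies in $\etp^{B_2}(\abar)$; because $M$ is existentially closed the inclusion $M \subs B_2$ is an immersion (Fact~\ref{ec models fact}), so $\etp^{B_2}(\abar) = \etp^M(\abar)$ and there is a tuple $\cbar_0$ from $M$ with $M \models \delta_2(\abar,\cbar_0)$. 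As $\delta_2$ is quantifier-free and $M \subs B_1$ we get $B_1 \models \delta_2(\abar,\cbar_0)$, and since $\cbar_0$ lies in $M$ while the entries of $\bbar$ lie outside $M$, all the required inequations hold. Thus $B_1$ itself --- interpreting $\abar$ and $\bbar$ by themselves and $\cbar$ by $\cbar_0$ --- is a model of $T \cup \{\delta_1(\abar,\bbar),\ \delta_2(\abar,\cbar),\ \bigwedge_{i,j} b_i \ne c_j\}$, which establishes the consistency.

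The conceptual crux, and the only point where existential closedness is used, is this immersion step: it guarantees that the ``algebraic shape'' of $\cbar$ over $\abar$ is already realised inside $M$, hence can be placed so as to avoid the finitely many parameters of $B_1 \setminus M$. Omitting the inequations from $\Sigma$ and this avoidance argument gives, verbatim, the weaker statement that every existentially closed model is an amalgamation base; disjointness genuinely needs more, as the inclusion $\Q \subs \Q(\sqrt{2})$ already shows at the level of fields. Beyond that I expect no real obstacle: the remaining work is the routine bookkeeping of which constants are shared and of the quantifier-free versus existential distinctions in passing to and from the diagrams.
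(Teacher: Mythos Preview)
Your argument is correct and is essentially the standard diagram-and-compactness proof that appears in Hodges; the paper itself does not give a proof of this fact but simply cites \cite[Corollary~8.6.2]{hodges-big}. There is nothing to compare beyond noting that your sketch reproduces that cited argument faithfully, including the key use of the immersion property (Fact~\ref{ec models fact}(ii)) to pull the witnesses $\cbar_0$ down into $M$ and thereby separate them from $\bbar$.
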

However there can be amalgamation bases which are not existentially closed models. 

There is also a connection with algebraically closed sets.
\begin{defn}
Let $M$ be an existentially closed model of an inductive theory $T$ and let $A
\subseteq M$. Then $\eacl(A)$ is the union of all finite $\exists$-definable
subsets of $M$ using parameters from $A$. 
If $A = \eacl(A)$ we say that $A$ is \emph{$\exists$-algebraically closed} in $M$.
\end{defn}
 It is not immediately obvious that
$\eacl(\eacl(A)) = \eacl(A)$ and that $\eacl(A)$ is the same if calculated in an
existentially closed extension of $M$. This follows from the fact that if an
existential formula $\phi(x, \bar a)$ defines a finite set of size $n$ in $M$,
then the formula
$$\exists x_1, \dots , x_{n+1} \left[\bigwedge_{i = 1}^{n+1} \phi(x_i, \bar a)
\land \bigwedge_{i \neq j} x_i \neq x_j\right]$$
is false in $M$. It follows by Fact~\ref{ec models fact}(iv) that there must be an
existential formula $\chi(\bar y)$ that implies this and holds of $\bar a$.

\begin{fact}[{\cite[Corollary~8.6.8]{hodges-big}}]\label{dab = eacl}
Let $T$ be an inductive theory and let $A$ be an amalgamation base for $\Emb(T)$. Then $A$ is a disjoint amalgamation base if and only if $A$ is $\exists$-algebraically closed (in an existentially closed extension $M$ of $A$).
\end{fact}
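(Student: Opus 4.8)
The plan is to prove the two implications separately; only the ``$\Leftarrow$'' direction uses that $A$ is an amalgamation base.

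Suppose first that $A$ is not $\exists$-algebraically closed in an existentially closed extension $M$: say $b\in\eacl^M(A)\minus A$, witnessed by an existential formula $\phi(x,\bar a)$ with $\bar a\in A$, $b\in\phi(M,\bar a)$, and $|\phi(M,\bar a)|=n$. As noted just before the statement, the $\exists$-sentence asserting that $\phi(\cdot,\bar a)$ has $n+1$ distinct solutions is false in $M$, so by Fact~\ref{ec models fact}(iv) there is an existential formula $\chi(\bar y)$ with $M\models\chi(\bar a)$ and
\[
T\models\lnot\exists\bar y\,x_1\cdots x_{n+1}\Bigl(\chi(\bar y)\land\bigwedge_{i}\phi(x_i,\bar y)\land\bigwedge_{i\neq j}x_i\neq x_j\Bigr).
\]
I claim $M$ and a fresh isomorphic copy $M'$ of it cannot be disjointly amalgamated over $A$, so $A$ is not a disjoint amalgamation base. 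Given any amalgam $C\models T$ with embeddings $g\colon M\to C$ and $g'\colon M'\to C$ agreeing on $A$ and with $g(M)\cap g'(M')=g(A)$: the element $g'(b)$ lies in $\phi(C,g(\bar a))$ (embeddings preserve the existential formula $\phi$, and $g,g'$ agree on $\bar a$), but $g'(b)\notin g(M)$, hence $g'(b)$ is distinct from all $n$ elements of $g(\phi(M,\bar a))$. So $\phi(C,g(\bar a))$ has at least $n+1$ elements, while $C\models\chi(g(\bar a))$ because $\chi$ is existential and $M\models\chi(\bar a)$ --- contradicting the choice of $\chi$.

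For the converse, assume $A$ is an amalgamation base and $\exists$-algebraically closed, and let $f_1\colon A\to B_1$, $f_2\colon A\to B_2$; identify $A$ with its images. Add fresh constants $\underline b$ for $b\in B_1\minus A$ and $\overline b$ for $b\in B_2\minus A$ (keeping one set of constants for $A$), and put
\[
\Sigma=T\cup\Diag(B_1)\cup\Diag(B_2)\cup\{\,\underline b\neq\overline{b'}:b\in B_1\minus A,\ b'\in B_2\minus A\,\}.
\]
A model of $\Sigma$ is exactly a disjoint amalgam of $B_1$ and $B_2$ over $A$, so it is enough to show $\Sigma$ is consistent. By compactness this reduces to: for finite $\bar b$ from $B_1\minus A$, $\bar b'$ from $B_2\minus A$ and $\bar a$ from $A$, there is $C\models T$ with tuples $\bar a$ (realising the quantifier-free type of the given $\bar a$ in $A$), $\bar\beta$, $\bar\gamma$ such that $\bar\beta\bar a$ realises $\qftp^{B_1}(\bar b\bar a)$, $\bar\gamma\bar a$ realises $\qftp^{B_2}(\bar b'\bar a)$, and $\bar\beta\cap\bar\gamma=\emptyset$.

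Since $A$ is an amalgamation base, we may amalgamate $B_1$ and $B_2$ over $A$ and pass to an existentially closed extension $M$; then (the images of) $\bar b,\bar b'$ lie in $M$, realise the required quantifier-free types over $A$, and have all coordinates outside $A$. As $A$ is $\exists$-algebraically closed, $\eacl^M(A)=A$, so no coordinate of $\bar b$ lies in any finite $\exists$-definable subset of $M$ over $A$; this is precisely what permits one to re-realise the existential type $\etp^M(\bar b/A)$, in a further existentially closed extension $M^\ast$ of $M$, by a tuple $\bar\beta$ disjoint from $\bar b'$. Taking $\bar\gamma:=\bar b'$ then gives the required $C:=M^\ast$, since $\etp^{M^\ast}(\bar\beta/A)=\etp^M(\bar b/A)$ preserves the quantifier-free type. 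This re-realisation is the technical heart of the argument --- without $\exists$-algebraic closedness a coordinate of $\bar b$ pinned into $\eacl(A)\minus A$ could not be moved off $\bar b'$, which is exactly the obstruction used in the forward direction --- and the details are carried out in \cite[Section~8.6]{hodges-big}.
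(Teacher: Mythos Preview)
The paper does not prove this statement at all: it is recorded as a \emph{Fact} with a bare citation to \cite[Corollary~8.6.8]{hodges-big}, so there is nothing in the paper to compare your argument against.

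Your forward direction (not $\exists$-algebraically closed $\Rightarrow$ not a disjoint amalgamation base) is correct and cleanly written; the use of Fact~\ref{ec models fact}(iv) to certify the bound on $|\phi(\cdot,\bar a)|$ uniformly in extensions is exactly the right move. One small notational point: you write $g'(b)$ for $b\in M$, but $b$ lives in $M$, not $M'$; you are implicitly composing with the isomorphism $M\cong M'$. This is harmless but worth making explicit.

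Your backward direction is set up correctly via compactness, but the crucial step --- that when no coordinate of $\bar b$ lies in $\eacl^M(A)$ one can re-realise $\etp^M(\bar b/A)$ in an extension by a tuple avoiding the finitely many points of $\bar b'$ --- is asserted rather than proved, and you yourself defer to \cite[Section~8.6]{hodges-big} for it. That is honest, but it means your write-up is a sketch rather than a self-contained proof: the re-realisation claim is not entirely trivial (one must argue that adding finitely many inequalities $x_i\neq c$ to a maximal existential type over $A$ whose $i$th coordinate is non-algebraic yields something still realisable, and this uses that $A$ is an amalgamation base so that types over $A$ behave well). Since the paper itself is content to cite Hodges, your level of detail already exceeds what the paper provides.
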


Amalgamation bases are also very useful for understanding the existential types which are realised in existentially closed models of $T$. Indeed, finding the amalgamation bases plays much the same role as proving a quantifier-elimination theorem for a complete first-order theory.
\begin{proposition}
Let  $M$, $N$ be existentially closed models of an inductive theory $T$, and suppose $\abar \in M$ and $\bbar \in N$ are tuples. 
Suppose also that there are amalgamation bases $A \subs M$ and $B \subs N$ with $\abar \in A$ and $\bbar \in B$, and an isomorphism $\theta: A \to B$ such that $\theta(\abar) = \bbar$. Then $\etp^M(\abar) = \etp^N(\bbar)$.
\end{proposition}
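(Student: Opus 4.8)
The plan is to exploit the defining property of amalgamation bases together with the characterisation of existentially closed models via maximality of existential types (Fact~\ref{ec models fact}(iii)). First I would observe that it suffices to show $\etp^M(\abar) \subs \etp^N(\bbar)$; the reverse inclusion then follows by symmetry, or alternatively one notes that once we have one inclusion, maximality of $\etp^N(\bbar)$ (which holds because $N$ is existentially closed) forces equality. So fix an $\exists$-formula $\phi(\xbar)$ with $M \models \phi(\abar)$; the goal is $N \models \phi(\bbar)$.

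The key step is to amalgamate $M$ and $N$ over the isomorphic amalgamation bases $A$ and $B$. Since $A$ is an amalgamation base for $\Emb(T)$, and we have embeddings $A \hookrightarrow M$ and $A \xrightarrow{\theta} B \hookrightarrow N$ (both models of $T$), there is a model $C \models T$ together with embeddings $g_1 : M \to C$ and $g_2 : N \to C$ agreeing on $A$, i.e.\ $g_1 \restriction A = g_2 \circ \theta$. In particular $g_1(\abar) = g_2(\theta(\abar)) = g_2(\bbar)$. Now embeddings preserve $\exists$-formulas, so from $M \models \phi(\abar)$ we get $C \models \phi(g_1(\abar))$, that is $C \models \phi(g_2(\bbar))$. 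Finally, because $N$ is existentially closed, the embedding $g_2 : N \to C$ is an immersion (Fact~\ref{ec models fact}(ii)), so $C \models \phi(g_2(\bbar))$ implies $N \models \phi(\bbar)$, as required.

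The only subtlety — and it is minor — is bookkeeping about which structure plays the role of the "base" in the amalgamation: we use $A$ as the common base, with the embedding into $N$ being the composite of $\theta$ with the inclusion $B \subs N$. One should also be slightly careful that $M, N$ are genuinely models of $T$ (they are, being existentially closed models of the inductive theory $T$), so that the amalgamation base property applies. There is no real obstacle here; the proposition is essentially an unwinding of definitions, and the work has already been done in setting up Facts \ref{ec models fact} and the notion of amalgamation base. I do not anticipate needing disjointness of the amalgamation, nor any property of $\eacl$, for this particular statement.
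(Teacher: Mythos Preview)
Your proof is correct. It differs slightly from the paper's: the paper observes that $\etp^A(\abar) = \etp^B(\bbar)$ and then cites a result from Hodges (Theorem~8.6.6) saying that an existential type over an amalgamation base has a \emph{unique} maximal extension, whence by Fact~\ref{ec models fact}(iii) both $\etp^M(\abar)$ and $\etp^N(\bbar)$ must be that unique extension. You instead carry out the amalgamation explicitly and push the formula up to $C$ and back down via the immersion $g_2$. Your route is more self-contained (it is essentially the relevant part of the proof of Hodges's theorem, unpacked for this situation), while the paper's version packages the mechanism into a citable uniqueness statement; neither approach requires anything beyond what is already set up in Section~2.
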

\begin{proof}
We have $\etp^A(\abar) = \etp^B(\bbar)$. Then by \cite[Theorem~8.6.6]{hodges-big}, since $A$ and $B$ are amalgamation bases, there is a unique way to extend this type to a maximal $\exists$-type in $T$, which by Fact~\ref{ec models fact}(iii) must be $\etp^M(\abar)$ and $\etp^N(\bbar)$.
\end{proof}
So the existential type of a tuple $\abar$ in an existentially closed model is determined by how $\abar$ embeds into an amalgamation base.

\subsection{JEP}

\begin{defn}
The category $\Emb(T)$ has the {\em joint embedding property (JEP)} if for any
two models of $T$ there is a third in which they can both be embedded. We also
say that $T$ has the JEP in that case. 
\end{defn}

We have the following characterisation of JEP.

\begin{lemma}
\label{jep-characterisation-lemma}
The category $\Emb(T)$ has the JEP if and only if for every pair of universal
sentences $\phi$ and $\psi$, if $T \entails \phi \lor \psi$ then $T \entails
\phi$ or $T \entails \psi$.
\end{lemma}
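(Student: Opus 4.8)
The plan is to prove both directions by manipulating diagrams under compactness; essentially all the content sits in one quantifier-shuffling step.

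For the direction ``JEP $\Rightarrow$ syntactic condition'' I would argue contrapositively. Suppose $\phi$ and $\psi$ are universal with $T \not\entails \phi$ and $T \not\entails \psi$, and choose $M_1 \models T \cup \{\lnot\phi\}$ and $M_2 \models T \cup \{\lnot\psi\}$. Since $\lnot\phi$ and $\lnot\psi$ are equivalent to existential sentences, they are preserved under embeddings; so in any $C \models T$ jointly embedding $M_1$ and $M_2$ we would have $C \models \lnot\phi \land \lnot\psi$, hence $C \not\models \phi \lor \psi$ and $T \not\entails \phi \lor \psi$. Thus if JEP holds, the syntactic condition holds.

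For the converse I would use the atomic diagram. Fix $M_1, M_2 \models T$ and expand $L$ by disjoint tuples of new constants $\cbar$ naming the elements of $M_1$ and $\dbar$ naming those of $M_2$. A model of $T \cup \Diag(M_1) \cup \Diag(M_2)$ is precisely a model of $T$ into which both $M_1$ and $M_2$ embed, so JEP for the pair $(M_1,M_2)$ is equivalent to consistency of this theory. If it is inconsistent, compactness yields finite conjunctions of literals $\delta_1(\cbar_0)$ from $\Diag(M_1)$ and $\delta_2(\dbar_0)$ from $\Diag(M_2)$ with $T \cup \{\delta_1(\cbar_0), \delta_2(\dbar_0)\}$ inconsistent, where $\cbar_0, \dbar_0$ are disjoint tuples of new constants not occurring in $T$. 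Replacing these constants by universally quantified variables gives $T \entails \forall\xbar\,\forall\ybar\,(\lnot\delta_1(\xbar) \lor \lnot\delta_2(\ybar))$, and since $\xbar$ and $\ybar$ are disjoint this is logically equivalent to $T \entails (\forall\xbar\,\lnot\delta_1(\xbar)) \lor (\forall\ybar\,\lnot\delta_2(\ybar))$, a disjunction of two universal sentences. By hypothesis $T$ entails one of them, say $\forall\xbar\,\lnot\delta_1(\xbar)$; but $\delta_1$ is a conjunction of literals true of the tuple of $M_1$ named by $\cbar_0$, so $M_1 \models \exists\xbar\,\delta_1(\xbar)$, contradicting $M_1 \models T$. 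Hence the theory is consistent, $M_1$ and $M_2$ have a common extension, and JEP holds.

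I expect the only step needing care is the passage from $\forall\xbar\,\forall\ybar\,(\lnot\delta_1(\xbar) \lor \lnot\delta_2(\ybar))$ to $(\forall\xbar\,\lnot\delta_1(\xbar)) \lor (\forall\ybar\,\lnot\delta_2(\ybar))$, which is valid precisely because $\xbar$ and $\ybar$ share no variables; one should also check that the finite pieces extracted from the diagrams are genuinely quantifier-free, so that the resulting sentences are universal. Everything else is the routine diagram-and-compactness machinery.
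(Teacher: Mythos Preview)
Your argument is correct and is precisely the ``method of diagrams and compactness'' that the paper invokes (it gives no details beyond that phrase and a reference to Hodges). The step you flag as needing care, the equivalence of $\forall\xbar\forall\ybar(\lnot\delta_1(\xbar)\lor\lnot\delta_2(\ybar))$ with $(\forall\xbar\,\lnot\delta_1(\xbar))\lor(\forall\ybar\,\lnot\delta_2(\ybar))$ for disjoint $\xbar,\ybar$, is indeed valid and is the only nontrivial point.
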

\begin{proof}
An easy argument using the method of diagrams and compactness. See
\cite[Exercise 3.2.8]{hodges-bmg}.
\end{proof}

When looking at existentially closed models of $T$, extending $T$ to an inductive theory with JEP plays the role of choosing a completion of $T$. However, not any extension will do. For example, if $T$ is the theory of fields, the existentially closed models of $T$ are the algebraically closed fields. The completions are then given by fixing the characteristic. However we could also extend $T$ to the inductive theory $T'$ of orderable fields (fields in which $-1$ is not a sum of squares). Then the existentially closed models of $T'$ are real-closed fields.

We now make a new definition.
\begin{defn}
\label{jep-refinement-defn}
An inductive extension $T'$ of an inductive theory $T$ is called a
\emph{\JEPrefinement} of $T$ if $T'$ has the joint embedding property and every
existentially closed model of $T'$ is an existentially closed model of $T$.
\end{defn}

We can use amalgamation bases to find these \JEPrefinement s.
\begin{lemma}\label{amalg base JEP}
If $A$ is an amalgamation base for $\Emb(T)$ then $T \cup \etheory(A)$ is a
\JEPrefinement\ of $T$.
\end{lemma}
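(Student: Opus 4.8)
The plan is to verify, in turn, the three conditions packaged in Definition~\ref{jep-refinement-defn}: that $T' := T \cup \etheory(A)$ is an inductive extension of $T$, that $\Emb(T')$ has the JEP, and that every existentially closed model of $T'$ is an existentially closed model of $T$. The two facts doing all the work are that the sentences of $\etheory(A)$ are existential (hence preserved under embeddings, and so under unions of chains) and that $A \models \etheory(A)$. Inductivity is then immediate: $T' \supseteq T$, the theory $T$ is inductive, and if $B$ is the union of a chain of models of $T'$ then $B \models T$, and $B$ extends some member of the chain, which satisfies $\etheory(A)$, so $B \models \etheory(A)$ too.

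For the JEP I would take $B_1, B_2 \models T'$ and first produce, for $i = 1,2$, an elementary extension $B_i^{\ast} \succeq B_i$ into which $A$ embeds. This is the only step needing a genuine argument: since $B_i \models T'$ we have $\etheory(A) \subseteq \etheory(B_i)$, i.e.\ every existential sentence true in $A$ holds in $B_i$, and this is precisely what makes the union of the elementary diagram of $B_i$ with the diagram of $A$ (over fresh constants) consistent, by the usual method-of-diagrams-plus-compactness argument; any model of this theory restricts to the desired $B_i^{\ast}$. Now $B_i^{\ast} \equiv B_i \models T'$, so in particular $B_i^{\ast} \models T$, while $A \models T$ since an amalgamation base is by definition a model of $T$. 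Amalgamating $B_1^{\ast}$ and $B_2^{\ast}$ over $A$ in $\Emb(T)$ then produces $C \models T$ with embeddings of $B_1^{\ast}$ and $B_2^{\ast}$ agreeing on $A$. Since $C$ contains a copy of $A$ it satisfies $\etheory(A)$, so $C \models T'$, and $B_1 \hookrightarrow B_1^{\ast} \hookrightarrow C$, $B_2 \hookrightarrow B_2^{\ast} \hookrightarrow C$; thus $B_1$ and $B_2$ jointly embed into a model of $T'$.

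For the last condition, let $M$ be an existentially closed model of $T'$, so in particular $M \models T'$. The key observation is that every extension $B$ of $M$ with $B \models T$ is automatically a model of $T'$: indeed $\etheory(A) \subseteq \etheory(M) \subseteq \etheory(B)$, the first inclusion because $M \models \etheory(A)$ and the second because $M \subseteq B$. Hence the extensions of $M$ that are models of $T$ coincide with those that are models of $T'$, and so the defining condition for $M$ to be existentially closed with respect to $T'$ is literally the same as the one for $T$; therefore $M$ is an existentially closed model of $T$.

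The only real obstacle is the first step of the JEP argument, namely embedding $A$ into an elementary extension of $B_i$. This rests on the standard fact that a structure $A$ embeds into some elementary extension of a structure $B$ exactly when $\etheory(A) \subseteq \etheory(B)$, a routine consequence of the method of diagrams and compactness; everything else reduces to the preservation of existential sentences under embeddings and under unions of chains.
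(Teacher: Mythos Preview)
Your proof is correct. The paper takes a slightly different route for the JEP step: rather than embedding $A$ into elementary extensions of $B_1$ and $B_2$ and then amalgamating, it first observes that $T \cup \Diag(A)$ trivially has the JEP (every model already contains a copy of $A$, so amalgamate over $A$), then notes that $T \cup \Diag(A)$ and $T \cup \etheory(A)$ have the same consequences in the language of $T$, and finally invokes the syntactic characterisation of JEP (Lemma~\ref{jep-characterisation-lemma}) to transfer the property. The two arguments rest on the same compactness/diagram calculation --- your ``$\etheory(A) \subseteq \etheory(B_i)$ implies $A$ embeds in some $B_i^\ast \succeq B_i$'' is precisely what underlies the claim about $L$-consequences --- but the paper packages it via the syntactic lemma already on hand, while you unpack it semantically. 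Your version is more self-contained; the paper's is shorter given Lemma~\ref{jep-characterisation-lemma} is already proved. For inductivity and for the final condition on existentially closed models, your argument and the paper's are essentially identical.
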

\begin{proof}
It is clear that $T \cup \Diag(A)$ has the JEP, where $\Diag(A)$ is the
quantifier-free diagram of $A$. However $T \cup \Diag(A)$ and $T \cup
\etheory(A)$ have the same consequences in the language of $T$. It follows from
Lemma \ref{jep-characterisation-lemma} that $T \cup \etheory(A)$ has the JEP
too.

Then since $T \cup \etheory(A)$ is an extension of $T$ by existential sentences
only, it is easy to see that every existentially closed model of $T \cup
\etheory(A)$ is an existentially closed model of $T$.
\end{proof}

Note that each existentially closed model $M$ of $T$ is a model of a unique
\JEPrefinement\ of $T$ modulo companions. Indeed $M \models T \cup \etheory(M)$
which is a \JEPrefinement\ of $T$. Conversely if $T'$ is a \JEPrefinement\ of
$T$ and $M \models T'$, then for every universal sentence $\phi$ we have $T'
\entails \phi$ if and only if $M \models \phi$. Indeed if $M \models \phi$, then
there is an existential sentence $\psi$ such that $T \entails \lnot \psi \lor
\phi$ and $M \models \psi$ (by Fact \ref{ec models fact}). Then $T' \not
\entails \lnot \psi$ and therefore, by JEP, $T' \entails \phi$. This shows that
the universal consequences of $T'$ are completely determined by $M$.

\subsection{Monster models}

As for complete first-order theories, it is notationally convenient (though not essential) to
work inside a \emph{monster model} $\mathfrak M$ of an inductive theory $T$ with
the JEP, that is, a model of some large cardinality $\kappa$ which is both
$\kappa^+$-universal and strongly $\kappa$-homogeneous.
By \emph{$\kappa^+$-universal} we mean that  every $A \models T$ with $|A| < \kappa^+$ embeds in $\M$. By \emph{strongly $\kappa$-homogeneous} we means that if $A$ is an amalgamation base for $\Emb(T)$,
and $f_1$, $f_2$ are embeddings of $A$ into $\M$, then there is an automorphism
$\theta$ of $\M$ such that $\theta \circ f_1 = f_2$.
 Then all models
considered are submodels of $\mathfrak M$, and maximal (existential) types are
the same as orbits of $\Aut(\mathfrak M)$. In this setting, monster models are
often called universal domains. \cite{pillay-ec-forking} calls them
\emph{e-universal domains}. As both universality and homogeneity are important,
we prefer the terminology \emph{monster model}. 

As for complete first-order theories, the universality and homogeneity properties together are equivalent to a saturation property:
 any existential type $p(\bar x)$ using parameters from a
 set $A$ of cardinality less than $\kappa$ that has a realisation in an
extension of $\mathfrak M$ already has a realisation in $\mathfrak M$. It follows that monster models of $T$ are existentially closed.


As usual, the existence of monster models depends on stability-theoretic conditions on $T$ or set-theoretic conditions on $\kappa$, for example, that $\kappa$ is strongly inaccessible. However since everything could be done in the category $\Emb(T)$ or $\ECcat(T)$, albeit at some cost in notation, we will not worry about moving outside ZFC like this. The only place we really use monster models in this paper is where we introduce independence relations in the usual way, as certain relations on subsets of the monster model. The alternative of treating them as relations on commuting squares in $\Emb(T)$ seems not to be widely known.

If $T$ does not have the JEP, then by choosing a monster model $\mathfrak M
\models T$ we are in effect choosing a \JEPrefinement\ of $T$ (modulo
companions).

\section {Existentially closed exponential fields}

In this section we characterise the existentially closed exponential fields. For the basics on exponential rings and fields the reader can consult
\cite{van-den-dries-erings}, \cite{macintyre-ealgebra} or \cite{kirby-fpef}.

\begin{definition}
An {\em exponential field} (or {\em E-field} for short) is a field $F$ of
characteristic zero, together with a homomorphism $E$ from the additive group
$\G_a(F)$ to the multiplicative group $\G_m(F)$.

If the field is algebraically closed we call it an \emph{EA-field}.
\end{definition}

The reason for excluding characteristic $p > 0$ is that in that case for any
element $x$ we have
$$(E(x) - 1)^p = E(x)^p - 1 = E(px) - 1 = E(0) - 1 = 0,$$ 
and there are no non-trivial nilpotents, so $E(x) = 1$.

We work in the category of exponential fields and their embeddings. Model
theoretically this means that we use the language $\Lering = \tuple{+,-,\cdot,
0, 1, E}$ of $E$-rings, where $E$ is a unary function symbol, and we look at the
class of models of the theory $\Tefield$ axiomatised by
\begin{enumerate}[(i)]
\item the axioms of fields of characteristic $0$;
\item $\forall x, y [E(x + y) = E(x) \cdot E(y)]$;
\item $E(0) = 1$.
\end{enumerate}

Atomic $\Lering$-formulas are exponential polynomial equations, so an exponential field $F$ is {\em existentially closed} if every finite system of exponential polynomial equations and inequations (with coefficients from $F$) that has a solution in an extension of $F$, already has a solution in $F$.

We will repeatedly use the following well known result.

\begin{fact} [see e.g. \cite{fuchs-abelian}]\label{divisibility fact}
Divisible Abelian groups are injective in the category of Abelian groups. That is,
if $A, B, Q$ are Abelian groups, $A \le B$ and $Q$ is divisible, then any
homomorphism $f : A \to Q$ extends to $B$.
\end{fact}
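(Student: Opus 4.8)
The plan is to prove this by the classical Zorn's lemma argument, extending $f$ one element at a time. Consider the poset $\mathcal{P}$ whose elements are pairs $(C,g)$ where $A \le C \le B$ is an intermediate subgroup and $g : C \to Q$ is a homomorphism with $g|_A = f$, ordered by $(C,g) \le (C',g')$ iff $C \le C'$ and $g'|_C = g$. The pair $(A,f)$ lies in $\mathcal{P}$, so it is non-empty, and the union of any chain in $\mathcal{P}$ (taking the union of the domains and of the graphs of the maps) is again an element of $\mathcal{P}$ which is an upper bound for the chain. Hence Zorn's lemma produces a maximal element $(C,g)$, and it remains only to show that $C = B$.

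Suppose for contradiction that $C \ne B$ and pick $b \in B \minus C$. Let $I = \{ n \in \N : nb \in C\}$, or rather the subgroup $\{n \in \mathbb{Z} : nb \in C\}$ of $\mathbb{Z}$; write it as $m\mathbb{Z}$ for some integer $m \ge 0$. If $m = 0$ then $C \cap \mathbb{Z}b = 0$, so $C + \mathbb{Z}b = C \oplus \mathbb{Z}b$ and we may extend $g$ to this subgroup by sending $b$ to $0$ (or indeed to any element of $Q$). If $m > 0$ then $mb \in C$, so $g(mb) \in Q$ is defined; since $Q$ is divisible, choose $q \in Q$ with $mq = g(mb)$, and define $g'(c + nb) := g(c) + nq$ on $C + \mathbb{Z}b$. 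This is well defined: if $c + nb = c' + n'b$ with $c, c' \in C$, then $(n - n')b = c' - c \in C$, so $m \mid (n - n')$, say $n - n' = km$; then $c' - c = kmb$, hence $g(c') - g(c) = g(kmb) = k\,g(mb) = kmq = (n - n')q$, which rearranges to $g(c) + nq = g(c') + n'q$. One checks at once that $g'$ is a homomorphism extending $g$. In either case $(C + \mathbb{Z}b, g')$ strictly extends $(C,g)$, contradicting maximality. Therefore $C = B$, and $g$ is the required extension of $f$ to $B$.

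The argument is entirely routine; there is no genuine obstacle. The only point requiring a moment's attention is the well-definedness of $g'$ in the case $m > 0$, and this is precisely the step at which the divisibility of $Q$ is used (it is what guarantees the existence of a suitable value $q$ for the ``new'' generator $b$). Since the result is standard, in the paper it is simply cited to \cite{fuchs-abelian}.
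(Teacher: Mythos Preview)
Your argument is correct and is the standard Zorn's lemma proof of this fact. The paper does not give a proof at all; it simply states the result as a Fact and cites \cite{fuchs-abelian}, exactly as you anticipated in your final sentence.
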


From this we immediately derive the following result.

\begin{proposition}
Every exponential field extends to an algebraically closed exponential
field.
\end{proposition}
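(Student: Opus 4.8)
The plan is to build the algebraically closed exponential field in two stages: first close the field algebraically as a pure field, then extend the exponential map to the algebraic closure. Let $F$ be an exponential field with exponential homomorphism $E \colon \ga(F) \to \gm(F)$. Let $F^{\alg}$ be an algebraic closure of the underlying field $F$; since $F$ has characteristic zero, $F^{\alg}$ is a field of characteristic zero, so it qualifies as the underlying field of a potential $E$-field.

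The key step is to extend $E$ to a homomorphism $\ga(F^{\alg}) \to \gm(F^{\alg})$. For this I would apply Fact~\ref{divisibility fact}: the additive group $\ga(F)$ is a subgroup of $\ga(F^{\alg})$, and the multiplicative group $\gm(F^{\alg})$ of an algebraically closed field of characteristic zero is divisible (every element has $n$th roots for all $n$, since $X^n - a$ splits). Hence the homomorphism $E \colon \ga(F) \to \gm(F^{\alg})$ extends to a homomorphism $E^{\alg} \colon \ga(F^{\alg}) \to \gm(F^{\alg})$. This makes $(F^{\alg}, E^{\alg})$ an exponential field which is algebraically closed, and the inclusion $F \hookrightarrow F^{\alg}$ is an $\Lering$-embedding because $E^{\alg}$ restricts to $E$ on $F$.

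There is essentially no obstacle here; the only point requiring a word of care is the verification that $\gm(F^{\alg})$ is divisible, which is exactly the characteristic-zero algebraically-closed hypothesis, and the observation that $\ga(F^{\alg})$ is an abelian group extension of $\ga(F)$ so that Fact~\ref{divisibility fact} applies verbatim. One could alternatively phrase the whole argument as: the category of $E$-fields is inductive and every model embeds in one whose field reduct is algebraically closed, but the direct injectivity argument is cleaner. I would therefore present the proof as a two-sentence application of Fact~\ref{divisibility fact} with the divisibility of $\gm(F^{\alg})$ spelled out.
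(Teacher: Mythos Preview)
Your proof is correct and is essentially identical to the paper's: take the algebraic closure $F^{\alg}$, note that $\gm(F^{\alg})$ is divisible, and apply Fact~\ref{divisibility fact} to extend $E$ from $\ga(F)$ to $\ga(F^{\alg})$. The paper's argument is exactly the two-sentence application you describe.
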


\begin{proof}
Let $F$ be an exponential field and let $F'$ be the algebraic closure of $F$.
Then $\G_a(F)$ is a subgroup of $\G_a(F')$ and $\G_m(F)$ is a subgroup of
$\G_m(F')$. So we can view $E$ as a homomorphism from $\G_a(F)$ to $\G_m(F')$.
But since $F'$ is algebraically closed, its group of units is divisible. Hence,
by Fact~\ref{divisibility fact}, $E$ extends to a homomorphism from $\G_a(F')$
to $\G_m(F')$.
\end{proof}

It follows that existentially closed exponential fields are algebraically closed
fields. The converse of course is not true as being algebraically closed says
nothing about the solubility of exponential equations. We will characterise
existentially closed exponential rings geometrically, via \emph{Pillay-Pierce}-style axioms. 

\begin{defn}
Let $F$ be an exponential field. Let $V \subseteq \G_a(F)^n \times
\G_m(F)^n$ be a subvariety which is irreducible over $F$, and let $(\bar x, \bar y)$ be a point of $V$ in a field extension of $F$, which is generic in $V$ over $F$. We say that $V$ is {\em additively free} if $\bar x$ satisfies no
equation of the form $\sum_{i = 1}^n m_i x_i = a$ where $a \in F$ and $m_i \in
\mathbb Z$, not all zero.
\end{defn}

\begin{theorem}
\label{acfe-axioms}
Let $F$ be an exponential field. Then $F$ is existentially
closed if and only if for every additively free subvariety $V \subseteq
\G_a(F)^n \times \G_m(F)^n$ there is a point $\bar a \in F$ such that $(\bar a,
E(\bar a)) \in V$.
\end{theorem}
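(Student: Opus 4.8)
The plan is to prove both directions by relating exponential points on additively free varieties to solutions of exponential polynomial systems. For the forward direction, suppose $F$ is existentially closed and let $V \subseteq \ga(F)^n \times \gm(F)^n$ be additively free. I would take a generic point $(\bbar, \cbar)$ of $V$ over $F$ in a field extension $F'$ of $F$, and try to build an exponential field structure on (a suitable algebraic extension of) $F(\bbar, \cbar)$ in which $E(b_i) = c_i$ for each $i$, extending the exponential on $F$. The point of additive freeness is exactly that $\bbar$ is $\Q$-linearly independent over $F$ modulo the relation lattice needed to extend $E$: we want to define $E$ on the divisible hull of $\ga(F) + \sum_i \Z b_i$ so that it agrees with $E$ on $F$ and sends $b_i \mapsto c_i$. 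Additive freeness guarantees $\sum m_i b_i \notin \ga(F)$ unless all $m_i = 0$, so the assignment $\sum a + \sum m_i b_i \mapsto E(a)\prod c_i^{m_i}$ is well-defined as a homomorphism on $\ga(F) + \sum \Z b_i$; then I extend it over the whole of $\ga(F')$ using Fact~\ref{divisibility fact} (passing to the algebraic closure of $F'$ so that $\gm$ is divisible). This produces an exponential field extension of $F$ containing a solution $\bbar$ to the exponential-polynomial system "$(\xbar, E(\xbar)) \in V$", so by existential closedness $F$ already contains such a solution $\abar$.

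For the converse, suppose $F$ satisfies the geometric condition and let $F \subs F^*$ with $F^*$ an exponential field, and suppose $\bar\alpha \in F^*$ is a solution to some finite system of exponential polynomial equations and inequations over $F$. I need to find a solution in $F$. The idea is to encode the system geometrically. Closing $\bar\alpha$ under the needed operations, I may assume the relevant data lives in $\ga(F^*)^n \times \gm(F^*)^n$ via $\bar p = (\bar\alpha, E(\bar\alpha))$; let $W$ be the $F$-locus of $\bar p$. After reindexing, write $\bar\alpha = (\bar\alpha', \bar\alpha'')$ where $\bar\alpha'$ is chosen maximal so that it satisfies no nontrivial $\Z$-linear equation over $F$, and $\bar\alpha''$ together with $E(\bar\alpha)$ is "additively dependent" — meaning each $\alpha''_j$ is, modulo $\ga(F)$, a $\Q$-linear combination of the $\alpha'_i$, so that $E(\alpha''_j)$ is determined (up to roots of unity / torsion adjustments handled in $F$, since $F$ is algebraically closed) by the $E(\alpha'_i)$ and $E$ of elements of $F$. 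The locus $V$ of $(\bar\alpha', E(\bar\alpha'))$ over $F$ is then additively free, so by hypothesis there is $\bar a' \in F$ with $(\bar a', E(\bar a')) \in V$; I then recover the remaining coordinates $\bar a''$ from $\bar a'$ using the same additive dependencies (solvable inside $F$ because $F$ is algebraically closed and its groups are divisible), and check that $(\bar a, E(\bar a))$ lands in $W$ and respects the inequations — the inequations survive because we may absorb them into the genericity of the chosen point, or argue that the "bad" locus is a proper subvariety.

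I expect the main obstacle to be the bookkeeping in the converse: cleanly separating the coordinates into an additively free part and an additively dependent part, and verifying that a generic exponential point of the free subvariety $V$ can be lifted back to a genuine solution of the \emph{original} system (including the inequations and any field-theoretic constraints recorded in $W$ but not in $V$). This requires that the projection from "$\{(\xbar,E(\xbar)) \in W\}$" to "$\{(\xbar', E(\xbar')) \in V\}$" be, generically, invertible over $F$, which is where algebraic closedness of $F$ and divisibility of $\gm(F)$ and $\ga(F)$ are used to solve for the dependent coordinates and their $E$-values. The forward direction is comparatively routine once one sees that additive freeness is precisely the well-definedness condition for the partial exponential map. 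A secondary point to handle carefully is reducing an arbitrary finite system of exponential polynomial equations to the form "$(\xbar, E(\xbar)) \in V$" — one introduces new variables for each subterm $E(t)$ appearing and for the tuple $\xbar$ itself, which is standard but should be stated.
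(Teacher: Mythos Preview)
Your forward direction is correct and matches the paper's argument: additive freeness is exactly the well-definedness condition for extending $E$ by $E(b_i)=c_i$ on $\ga(F)+\sum \mathbb Z b_i$, and divisibility of $\gm$ of the algebraic closure does the rest.

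For the converse, your architecture is right --- reduce to $P(\bar x,E(\bar x))=0$, take the locus $W$ of a solution $(\bar\alpha,E(\bar\alpha))$, split off a maximal additively independent subtuple $\bar\alpha'$, apply the hypothesis, and reconstruct --- but the reconstruction step has a genuine gap. You take $V$ to be the locus of $(\bar\alpha',E(\bar\alpha'))$, find $(\bar a',E(\bar a'))\in V$, and set $\bar a''=A\bar a'+\bar b$ with $A$ the rational matrix expressing $\bar\alpha''$ over $\bar\alpha'$ and $F$. The problem is that $(\bar a,E(\bar a))$ need not lie in $W$: the implicit map $V\to W$ would send $(\bar x',\bar y')$ to a point whose $\bar y''$-coordinates are $(\bar y')^A\cdot E(\bar b)$, and since $A$ has non-integer entries this is not a morphism of varieties --- it requires extracting roots, and the root selected by the exponential on $F$ need not match the one recorded in $W$. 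Your parenthetical about ``roots of unity / torsion adjustments'' flags the issue but does not resolve it; nor does ``generically invertible'' help, since the hypothesis only hands you \emph{some} exponential point on $V$, not a generic one. The paper's fix is to take instead the locus $V'$ of $(\bar\alpha'/N,\,E(\bar\alpha'/N))$, where $N$ clears the denominators of $A$. Then $NA$ is integral and $(\bar x,\bar y)\mapsto (NA\cdot\bar x+\bar b,\ \bar y^{NA}\cdot E(\bar b))$ is an honest morphism carrying the generic point of $V'$ to that of $W$; hence any exponential point of $V'$ maps to an exponential point of $W$.

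Two smaller points. Your proposed handling of inequations (``absorb into genericity'' or ``bad locus is a proper subvariety'') fails for the same reason: the hypothesis gives an arbitrary exponential point, which may land in the bad locus. The standard move $x\ne 0 \leftrightarrow \exists y\,(xy=1)$ converts inequations to equations beforehand and eliminates the problem. Also, you invoke algebraic closedness of $F$ without deriving it; the paper gets it from the hypothesis by applying it to $V\subseteq\ga\times\gm$ cut out by a single polynomial $p(y)=0$ in the $\gm$-coordinate.
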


\begin{proof}
We first show the left to right implication. Suppose $V \subseteq \G_a(F)^n
\times \G_m(F)^n$ is additively free. Let $F'$ be an algebraically closed field
extension of $F$ containing a point $(\abar, \bbar) \in V$, generic over $F$,
where $\abar = (a_1,\dots,a_n)$ and $\bbar = (b_1,\dots,b_n)$. We extend the
exponential homomorphism $E : \G_a(F) \to \G_m(F)$ to $\G_a(F')$ as follows.
Define $E'(a_i) = b_i$ for $i=1,\dots,n$.  Since $V$ is additively free and
$(\abar,\bbar)$ is generic in $V$ over $F$, the $a_i$ are $\Q$-linearly
independent over $F$. So this assigment extends uniquely to a homomorphism $E'$
extending $E$, defined on the subgroup generated by $F$ and $\abar$. Now since
$\G_m(F')$ is divisible we can extend $E'$ to $\G_a(F')$. Thus we have an
exponential field extending $F$ where $V$ has an exponential point. By
existential closedness of $F$, there is an exponential point already in $F$. 

Conversely, assume that every additively free subvariety has an exponential
point in $F$. By taking $V \subs \G_a \times \G_m$ given by $p(y) = 0$, a single polynomial in the $\G_m$-coordinate, we see that $F$ is algebraically closed.

To show existential closedness we need to show that every system of
exponential equations and inequations that has a solution in an extension of $F$
already has a solution in $F$. Two easy observations help to simplify this.
Firstly, inequations can be reduced to equations as 
$$\Tefield \entails x \neq 0 \liff \exists y (xy = 1).$$ 
Secondly, iterated exponentials can be substituted by additional variables as, for example,
$$\Tefield \entails f(\bar x, E(\bar x), E^2(\bar x)) = 0 \liff \exists \bar y [f(\bar x, \bar y,
E(\bar y)) = 0 \land \bar y = E(\bar x)].$$
Thus to show that $F$ is existentially closed it is enough to show that every
finite system of polynomial equations $P(\bar x, \bar y) = 0$ that has an exponential solution
$P(\bar a, E(\bar a)) = 0$ in an extension $F' \supseteq F$,
already has an exponential solution in $F$. So let $P$, $F'$ and $\bar a =
(a_1, \dots , a_n)$ be as above. Let $V$ be the locus of $(\abar, E(\abar))$ over $F$, that is, the smallest subvariety of $\ga^n \times \gm^n$ containing $(\abar,E(\abar))$ which is defined over $F$.
If $V$ is additively free, we will get an exponential point in $F$ by our assumption.
However this may not be the case. Without loss of generality assume that $a_1,
\dots , a_k$ are $\Q$-linearly independent over $F$ and $a_{k+1}, ..,
a_n$ are in the $\Q$-linear span $\langle Fa_1, \dots , a_k
\rangle_{\mathbb Q}$.  Thus there is a $n$-tuple $\bar b \in F$ and an $n \times
k$ matrix $A$ of rational numbers such that
$$
\begin{bmatrix}
a_1 \\
\vdots \\
a_n
\end{bmatrix}
= A
\begin{bmatrix}
a_1 \\
\vdots \\
a_k
\end{bmatrix}
+ \bar b.
$$
Let $N$ be the least common multiple of the denominators of entries of $A$. Now
let $V'$ be the locus of $(\frac {a_1} N, \dots , \frac {a_k} N, E(\frac {a_1} N),
\dots , E(\frac {a_k} N))$ over $F$. 
Then $V'$ is additively free.
By the assumption, there is a point $\bar c \in F$ such that $(\bar c, E(\bar c))
\in V'$. Let 
$$
\bar d = A
\begin{bmatrix}
N c_1 \\
\vdots \\
N c_k \\
\end{bmatrix}
+ \bar b.
$$
Then $(\dbar,E(\dbar)) \in V(F)$, and in particular $P(\dbar,E(\dbar)) = 0$.
So $F$ is existentially closed.
\end{proof}

We can derive a number of consequences from this characterisation.

\begin{corollary}
If $F$ is an existentially closed exponential field, then the homomorphism $E :
\G_a(F) \to \G_m(F)$ is surjective.
\end{corollary}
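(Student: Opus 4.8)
The plan is to apply Theorem~\ref{acfe-axioms} directly to a suitably chosen one-dimensional variety. Fix an arbitrary $b \in \gm(F) = F^\times$; the goal is to produce $a \in F$ with $E(a) = b$. Take $n = 1$ and let $V \subseteq \ga(F) \times \gm(F)$ be the subvariety defined by the single equation $y = b$. Since $b \neq 0$, this is a well-defined subvariety of $\ga \times \gm$, and it is irreducible over $F$: it is a copy of the affine line in the $x$-coordinate, so a generic point of $V$ over $F$ has the form $(t, b)$ with $t$ transcendental over $F$.

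Next I would check that $V$ is additively free. By definition this requires that $t$ satisfies no relation $m t = c$ with $c \in F$ and $m \in \mathbb{Z} \setminus \{0\}$; but any such relation would give $t = c/m \in F$, contradicting transcendence of $t$ over $F$. Hence $V$ is additively free. Applying Theorem~\ref{acfe-axioms} to $V$ yields a point $a \in F$ with $(a, E(a)) \in V$, i.e.\ $E(a) = b$. As $b \in \gm(F)$ was arbitrary, $E \colon \ga(F) \to \gm(F)$ is surjective.

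There is essentially no obstacle here once Theorem~\ref{acfe-axioms} is in hand: the only thing to verify is additive freeness of the chosen $V$, which is immediate because the sole coordinate is generic (transcendental) over $F$. If one wanted to avoid even invoking genericity, one could instead note directly that the variety $y = b$ is additively free because no nonzero integer multiple of its $x$-coordinate is constrained by its defining ideal over $F$.
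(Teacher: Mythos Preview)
Your proof is correct and follows exactly the paper's approach: the paper's proof simply states that for any $a \in F^\times$ the variety $W(x,y)$ given by $y = a$ is additively free and therefore has an exponential point. You have supplied the routine verifications (irreducibility and additive freeness via transcendence of the generic $x$-coordinate) that the paper leaves implicit.
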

\begin{proof}
For any $a \in F^\times$ the variety $W(x,y)$ given by $y = a$ is additively free and
therefore has an exponential point.
\end{proof}

Next we show that $\mathbb Z$ is universally definable in any existentially
closed exponential field as the multiplicative stabiliser of the kernel.

\begin{corollary}
If $F$ is an existentially closed exponential field, then for every $a \in F$
$$a \in \mathbb Z \text { iff } F \models \forall x [E(x) = 1 \to E(ax) = 1].$$
\end{corollary}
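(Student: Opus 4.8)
The plan is to dispatch the left-to-right implication by a one-line computation and to prove the contrapositive of the converse: if $a\notin\mathbb Z$, produce an explicit witness $x\in F$ with $E(x)=1$ and $E(ax)\ne 1$, using the geometric criterion of Theorem~\ref{acfe-axioms}. For the easy direction, if $a=n\in\mathbb Z$ and $E(x)=1$ then $E(nx)=E(x)^n=1$ (using $E(0)=1$, which also gives $E(-y)=E(y)^{-1}$, to cover all integers $n$), so the displayed universal formula holds.

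For the converse I would assume $a\in F\setminus\mathbb Z$ and split into two cases. If $a\notin\mathbb Q$, pick any $z_0\in F^\times$ with $z_0\ne 1$ and consider the curve $V=\{(t,at,1,z_0)\}\subseteq\ga^2\times\gm^2$, which is irreducible and defined over $F$. Its generic point over $F$ has $\ga$-coordinates $(\tau,a\tau)$ with $\tau$ transcendental over $F$; any relation $m_1\tau+m_2(a\tau)=b$ with $m_i\in\mathbb Z$ and $b\in F$ forces $m_1+m_2a=0$ (since $\tau\notin F$) and hence $m_1=m_2=0$ (since $a\notin\mathbb Q$), so $V$ is additively free. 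Theorem~\ref{acfe-axioms} then yields $(c_1,c_2)\in F^2$ with $(c_1,c_2,E(c_1),E(c_2))\in V$, i.e.\ $c_2=ac_1$, $E(c_1)=1$ and $E(c_2)=z_0\ne 1$; taking $x:=c_1$ gives $E(x)=1$ and $E(ax)=E(c_2)\ne 1$.

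If instead $a=p/q$ in lowest terms with $q\ge 2$, the two-dimensional construction is not available, because the relation $qx_2=px_1$ is one of the forbidden additive relations. Instead I would fix a primitive $q$-th root of unity $\zeta\in F$ (available since $F$ is algebraically closed of characteristic $0$) and use the horizontal line $V=\{(t,\zeta)\}\subseteq\ga\times\gm$, which is additively free because its generic $\ga$-coordinate is transcendental over $F$. By Theorem~\ref{acfe-axioms} there is $c\in F$ with $E(c)=\zeta$, and then $x:=qc$ satisfies $E(x)=\zeta^q=1$ while $E(ax)=E(pc)=\zeta^p\ne 1$, because $\gcd(p,q)=1$ and $q\ge 2$.

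The main obstacle is choosing the right variety in each case and verifying additive freeness: the difficulty is precisely that the obvious uniform choice encoding ``$x_2=ax_1$'' is legitimate only when $a$ is irrational, so the rational non-integer case has to be routed through roots of unity, where the witness emerges as a multiple of the point produced by the axiom rather than the point itself. Everything else is a routine computation with the group law of $E$.
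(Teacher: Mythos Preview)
Your proof is correct and follows essentially the same approach as the paper: the easy direction is the same computation, the irrational case uses exactly the same two-variable variety $x_2=ax_1,\ y_1=1,\ y_2=d$, and in the rational non-integer case you produce a logarithm of a primitive $q$-th root of unity and take $x=qc$. The only cosmetic difference is that the paper cites the surjectivity corollary to get such a logarithm, whereas you reprove that instance inline via the one-variable line $V=\{(t,\zeta)\}$; the arguments are otherwise identical.
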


\begin{proof}
The left to right implication is clear. For the converse assume that $c \in F
\minus \mathbb Z$.

If $c \in \mathbb Q$, then write $c = \frac n m$ where $m > 1$ and $n,m$ are
coprime. By the previous corollary there is $b \in F$ such that $E(b)$ is a
primitive $m$-th root of unity. Then $E(mb) = 1$ but $E(cmb) = E(nb) = E(b)^n
\neq 1$.

If $c \not \in \mathbb Q$, then pick $d \in F$ that is distinct from $1$ and
consider the variety $W(x_1, x_2, y_1, y_2)$ defined by the equations
$$cx_1 = x_2, y_1 = 1, y_2 = d.$$
Since $c \not \in \mathbb Q$, the variety $W$ is additively free and therefore
has an exponential point $(a, b, E(a), E(b)) \in W$. But then $E(a) = 1$ and
$E(ca) = E(b) = d \neq 1$.
\end{proof}

Since a countable set cannot be definable in a saturated model of a first-order
theory we also have the following consequence.

\begin{corollary}
The class of existentially closed exponential fields is not first-order
axiomatisable. Equivalently, the theory $\Tefield$ does not have a
model-companion. \qed
\end{corollary}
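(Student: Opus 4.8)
The plan is to argue by contradiction, leaning on the preceding corollary, which shows that $\mathbb Z$ (the prime subring) is first-order definable in every existentially closed exponential field by the universal formula $\chi(a)$ given by $\forall x\,[E(x) = 1 \to E(ax) = 1]$. First I would record the standard fact that, for an inductive theory $T$, the class of existentially closed models of $T$ is elementary (i.e.\ first-order axiomatisable) precisely when $T$ has a model companion, and that when it does, the model companion axiomatises this class (see e.g.\ \cite[Ch.~8]{hodges-big}). This reduces both halves of the statement to showing that $\ECcat(\Tefield)$, regarded as a class of $\Lering$-structures, is not elementary.

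So suppose towards a contradiction that some first-order $\Lering$-theory $T^*$ has exactly the existentially closed exponential fields as its models. I would pick any $M \models T^*$ and pass to an $\aleph_1$-saturated elementary extension $M' \succeq M$, which exists by the classical construction. Since $T^*$ is a first-order theory and $M' \succeq M \models T^*$, we get $M' \models T^*$, so $M'$ is again an existentially closed exponential field. By the previous corollary the set $D := \chi(M')$ equals the prime subring of $M'$, which is isomorphic to $\mathbb Z$ because the characteristic is $0$; in particular $D$ is countably infinite.

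Finally I would derive the contradiction from saturation: the partial type $\{\chi(x)\} \cup \{x \neq d : d \in D\}$ uses only the countably many parameters in $D$, and it is finitely satisfiable in $M'$ because $D$ is infinite, so by $\aleph_1$-saturation it is realised by some $e \in M'$; but then $e \in \chi(M') = D$ while $e \neq d$ for all $d \in D$, which is absurd. Hence no such $T^*$ exists, and by the first paragraph $\Tefield$ has no model companion. I do not anticipate a genuine obstacle; the only points needing a moment's care are that the copy of $\mathbb Z$ really is infinite (ensured by characteristic $0$) and the citation of the ``no model companion $\iff$ not elementary'' equivalence. If one prefers to avoid saturation altogether, the same idea runs as a direct compactness argument: adjoin a new constant $c$ together with the sentence $\chi(c)$ and all sentences $c \neq \underline n$ for the numerals $\underline n$; every finite subset is satisfiable in an existentially closed exponential field because $\chi$ defines the infinite set $\mathbb Z$ there, yet a model of the whole set would contradict the previous corollary.
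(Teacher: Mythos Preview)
Your argument is correct and is exactly the paper's approach spelled out in detail: the paper simply remarks that a countable set cannot be definable in a saturated model of a first-order theory and leaves it at that, whereas you unpack this via an $\aleph_1$-saturated elementary extension (or equivalently compactness) and also make explicit the standard equivalence between existence of a model companion and elementarity of the class of existentially closed models.
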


\begin{remark}
Note that the class of existentially closed exponential fields is axiomatisable
by an $L_{\omega_1,\omega}$-sentence.

Several papers of Boris Zilber and of the second author are devoted to the study of \emph{exponentially closed fields}. These are not existentially closed in the sense of this paper, but they are existentially closed within the category of exponential fields and so-called \emph{strong} extensions. To get the JEP we also insist that the exponential fields have the Schanuel property, and usually that they have standard kernel. Then the strong extensions are roughly those which preserve the Schanuel property and the kernel. 

The axiom giving the exponential closedness is then quite similar to our axiom
giving existential closedness, specifying that certain varieties have
exponential points on them. In that case, the varieties are not just additively
free but also \emph{multiplicatively free} (to avoid extending the kernel) and
\emph{rotund} (to preserve the Schanuel property). These axioms are also
$L_{\omega_1,\omega}$-expressible.
\end{remark}

\section {Amalgamation bases}

In this section we characterise the amalgamation bases for exponential fields.
Here and later we will use the independence notion for algebraically closed fields.
\begin{definition}\label{ACF indep defn}
Let $K$ be an algebraically closed field, and $A,B,C$ subfields of $K$ with $C \subseteq A$ and $C \subseteq B$. We say that $A$ is independent from $B$ over $C$ (with respect to the theory $\ACF$ and as subfields of $K$) and write $A \nforks_C^{\ACF} B$ if for every finite tuple $\bar a$ from $A$, the transcendence degree satisfies $\td(\bar a/B) = \td(\bar a/C)$.
\end{definition}

The following is a very well-known special case of the uniqueness of non-forking extensions in stable theories.
\begin{fact}\label{ACF amalg}
Suppose $C$ is an algebraically closed field and $A$, $B$ are two field extensions of $C$. Then there is, up to isomorphism, a unique way to embed $A$ and $B$ into an extension field $K$ such that  $A \nforks_C^{\ACF} B$, and $K$ is generated as a field by (the image of) $A \cup B$.
\end{fact}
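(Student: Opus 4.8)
The plan is to realise the free amalgam concretely as $\operatorname{Frac}(A \otimes_C B)$, and then to show that any amalgam satisfying the two stated conditions is canonically isomorphic to it.

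\textbf{Existence.} Because $C$ is algebraically closed, $A \otimes_C B$ is an integral domain: reducing to $A$ finitely generated over $C$, this is the statement that an irreducible affine $C$-variety stays irreducible after base change to $B$ (equivalently, every extension of $C$ is a regular extension). Set $K := \operatorname{Frac}(A \otimes_C B)$, with the evident homomorphisms $A \to K$ and $B \to K$; these are injective since $A$ and $B$ are fields, and they agree on $C$. To check $A \nforks_C^{\ACF} B$ in $K$, take a finite tuple $\bar a$ from $A$ and reindex so that $a_1,\dots,a_r$ is a transcendence basis of $C(\bar a)/C$, whence $\td(\bar a/C) = r$. If some nonzero $P \in B[\bar X_1,\dots,\bar X_r]$ had $P(a_1,\dots,a_r) = 0$ in $K$ — hence already in $A \otimes_C B$ — then expanding $P$, and using that the distinct monomials in $a_1,\dots,a_r$ are $C$-linearly independent in $A$ together with the fact that a $C$-linearly independent family of $A$ stays $B$-linearly independent in $A \otimes_C B$, every coefficient of $P$ would vanish, a contradiction. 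Hence $\td(\bar a/B) \ge r$, and the reverse inequality is automatic, so $K$ is a free amalgam generated as a field by the images of $A$ and $B$.

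\textbf{Uniqueness.} Let $K'$ carry homomorphisms $j_A : A \to K'$ and $j_B : B \to K'$ agreeing on $C$, with $j_A(A) \nforks_C^{\ACF} j_B(B)$ and $K'$ generated as a field by $j_A(A) \cup j_B(B)$. Consider the $C$-algebra map $\mu : A \otimes_C B \to K'$, $a \otimes b \mapsto j_A(a) j_B(b)$; its image is the subring generated by $j_A(A) \cup j_B(B)$, whose fraction field is $K'$. It therefore suffices to show $\mu$ is injective, for then $\mu$ induces an isomorphism $\operatorname{Frac}(A \otimes_C B) \xrightarrow{\sim} K'$ compatible with the embeddings of $A$ and $B$, and composing with the analogous isomorphism for $K$ gives the desired identification of amalgams. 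Now $\ker\mu = 0$ is precisely the assertion that $j_A(A)$ and $j_B(B)$ are linearly disjoint over $C$, and this is what the independence hypothesis buys: choosing a transcendence basis $\bar X$ of $A/C$, the hypothesis makes $j_A(\bar X)$ algebraically independent over $j_B(B)$, so $C(j_A(\bar X))$ and $j_B(B)$ are linearly disjoint over $C$ (monomials, then localise to fraction fields); since $A/C(\bar X)$ is algebraic and $B/C$ — hence also its base change over $C(\bar X)$ — is regular because $C$ is algebraically closed, linear disjointness propagates up the tower $C \subseteq C(\bar X) \subseteq A$, yielding that $A$ and $B$ are linearly disjoint over $C$. (Alternatively, reduce $\mu$ to finitely generated subextensions $A_0 \subseteq A$, $B_0 \subseteq B$ and note $\ker\mu = 0$ there because $\dim(j_A(A_0)\cdot j_B(B_0)) = \td(A_0/C) + \td(B_0/C) = \dim(A_0 \otimes_C B_0)$, a nonzero prime being unable to preserve dimension in a finitely generated domain over a field.)

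\textbf{Main obstacle.} Everything except one step is formal: exactness of $\otimes_C$, injectivity of the structure maps, the monomial computation, and extending an isomorphism of fraction fields. The substantive point is the passage from the transcendence-degree form of independence to genuine linear disjointness over $C$, i.e.\ $\ker\mu = 0$, which really uses that $C$ is algebraically closed (equivalently, that all extensions of $C$ are regular). One may either invoke the classical equivalence of "free" and "linearly disjoint" over an algebraically closed base (see e.g.\ Lang's \emph{Algebra} or Fried--Jarden's \emph{Field Arithmetic}) or carry out the tower argument indicated above.
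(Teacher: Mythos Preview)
Your proof is correct. The paper does not actually prove this statement: it records it as a well-known fact, describing it as ``a very well-known special case of the uniqueness of non-forking extensions in stable theories.'' So the paper's route is to invoke the general machinery of stability theory (in a stable theory, non-forking extensions over an algebraically closed set exist and are unique up to conjugacy), specialised to $\ACF$.

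Your approach is genuinely different and more elementary: you build the amalgam explicitly as $\operatorname{Frac}(A\otimes_C B)$ and then reduce uniqueness to the classical equivalence, over an algebraically closed base, between freeness (equality of transcendence degrees) and linear disjointness. This has the advantage of being self-contained and of making the amalgam concrete, at the cost of having to verify the linear-disjointness step by hand; you correctly identify this step as the only substantive one and give two valid arguments for it (the tower argument via regularity of $B/C$, and the Krull-dimension count on finitely generated subextensions). The paper's route, by contrast, requires no algebraic work but presupposes familiarity with forking in stable theories. Either is perfectly adequate here; yours is arguably preferable for a reader who does not already have the stability-theoretic fact at hand.
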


\begin{theorem}
\label {2-amalgamation}\label{amalg base is EA-field}
The amalgamation bases for $\Emb(\Tefield)$ are precisely the algebraically closed exponential fields (EA-fields). Furthermore they are disjoint amalgamation bases.
\end{theorem}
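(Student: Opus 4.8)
There are two directions. For one direction, I would use the general model-theoretic facts already set up: every existentially closed model is a disjoint amalgamation base (quoted from Hodges), and a disjoint amalgamation base is precisely an $\exists$-algebraically closed subset (Fact~\ref{dab = eacl}). Since we showed (Corollary after Theorem~\ref{acfe-axioms}) that $\mathbb{Z}$ is universally definable and (in Theorem~\ref{acfe-axioms}) that existentially closed $E$-fields are algebraically closed as fields, one checks that an $\exists$-algebraically closed subset of an existentially closed $E$-field must be an algebraically closed subfield, hence an EA-field; conversely, any amalgamation base that happens to be an EA-field is automatically $\exists$-algebraically closed (its field-theoretic algebraic closure is itself, and there are no extra finite $\exists$-definable sets to worry about once the field is algebraically closed), so it will be a \emph{disjoint} amalgamation base. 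So the real content is: \textbf{every EA-field is an amalgamation base}, and moreover a disjoint one.

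\textbf{Proving EA-fields amalgamate.} Let $C$ be an EA-field and $f_i : C \to B_i$ two embeddings of $E$-fields, $i = 1,2$; I want to amalgamate $B_1, B_2$ over $C$ disjointly. First reduce to the case where $B_1, B_2$ are themselves EA-fields, by replacing each $B_i$ with an algebraically closed exponential field extension (using the Proposition that every exponential field extends to an algebraically closed one); if I can amalgamate the enlargements disjointly over $C$, I restrict. Now, as fields, use Fact~\ref{ACF amalg}: since $C$ is algebraically closed, there is a field $K_0$ generated by $B_1 \cup B_2$ with $B_1 \nforks^{\ACF}_C B_2$, and this amalgamation is disjoint in the field sense, i.e.\ $B_1 \cap B_2 = C$ inside $K_0$ (algebraic independence over an algebraically closed field gives disjointness of the algebraic closures, hence of $B_1, B_2$ themselves). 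Let $K$ be the algebraic closure of $K_0$. The task is to put a compatible $E$ on $K$.

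\textbf{The main obstacle: defining $E$ on the amalgam.} The exponential maps $E_1$ on $\G_a(B_1)$ and $E_2$ on $\G_a(B_2)$ agree on $\G_a(C)$, so together they define a homomorphism on the subgroup $\G_a(B_1) + \G_a(B_2)$ of $\G_a(K)$ \emph{provided} $\G_a(B_1) \cap \G_a(B_2) = \G_a(C)$ as additive groups — and this is exactly additive disjointness, which holds because $B_1 \cap B_2 = C$ as fields. So we get a well-defined homomorphism $E_0 : \G_a(B_1) + \G_a(B_2) \to \G_m(K)$ extending $E_1$ and $E_2$. Then, since $K$ is algebraically closed, $\G_m(K)$ is divisible, so by Fact~\ref{divisibility fact} $E_0$ extends to a homomorphism $E : \G_a(K) \to \G_m(K)$. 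This makes $K$ an $E$-field into which $B_1$ and $B_2$ embed over $C$ compatibly, and the embeddings are disjoint since already $B_1 \cap B_2 = C$. I expect the one subtle point to be verifying that the two partial maps really do glue to a homomorphism — i.e.\ that no $\mathbb{Z}$-linear (equivalently $\mathbb{Q}$-linear, since the groups are torsion-free and $C$ is algebraically closed hence the relevant spans behave well) relation between elements of $B_1$ and $B_2$ over $C$ forces an inconsistency; this reduces to the additive disjointness $\G_a(B_1) \cap \G_a(B_2) = \G_a(C)$, which is immediate from field-theoretic disjointness. Finally, to conclude these are \emph{disjoint} amalgamation bases, note we have exhibited disjoint amalgams directly, or alternatively invoke Fact~\ref{dab = eacl} together with the fact that an EA-field is $\exists$-algebraically closed in any existentially closed extension.
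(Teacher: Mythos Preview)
Your argument for the forward direction (EA-fields are disjoint amalgamation bases) is correct and matches the paper's proof almost exactly: amalgamate as fields independently over the algebraically closed base using Fact~\ref{ACF amalg}, glue the two exponentials on the subgroup $B_1 + B_2$ using the disjointness $B_1 \cap B_2 = C$, then extend by divisibility of $\gm(K)$.

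However, there is a genuine gap in the reverse direction. You invoke Fact~\ref{dab = eacl}, but that fact says only that \emph{among amalgamation bases}, the disjoint ones are exactly the $\exists$-algebraically closed ones. It does \emph{not} say that every amalgamation base is $\exists$-algebraically closed. So your argument yields only ``disjoint amalgamation base $\Rightarrow$ $\exists$-algebraically closed $\Rightarrow$ EA-field'', leaving open the possibility of non-disjoint amalgamation bases that are not EA-fields. (Compare with pure fields: $\mathbb{Q}$ is an amalgamation base but not a disjoint one, and not $\exists$-algebraically closed.) The reference to $\mathbb{Z}$ being universally definable does not help here.

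The paper closes this gap by a direct construction: given an $E$-field $F$ that is not algebraically closed, pick $a \in F^{\alg} \setminus F$, and build two extensions that cannot be amalgamated. In $F_1 = F^{\alg}$ (with any extension of $E$), the element $E_1(a)$ is algebraic over $F$; in $F_2 = F(a,t)$ with $t$ transcendental, set $E_2(a) = t$. Any amalgam would have to make $E(a)$ simultaneously algebraic and transcendental over $F$. You need an argument of this kind; the general model-theoretic facts you cite are not enough.

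A minor secondary point: your claim that an EA-field is ``automatically $\exists$-algebraically closed'' because ``there are no extra finite $\exists$-definable sets to worry about'' is not justified and is in fact the content of Corollary~\ref{eacl characterization}, which the paper derives \emph{from} this theorem. Fortunately you do not actually need it, since you exhibit disjoint amalgams directly.
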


\begin{proof}
Let $F$ be an EA-field, and let $f_1 : F \to F_1$ and $f_2 : F \to F_2$ be two
$E$-field extensions of $F$. Using Fact~\ref{ACF amalg} there is an
algebraically closed field $K$ and field embeddings $g_1 : F_1 \to K$ band $g_2
: F_2 \to K$ such that $g_1f_1 = g_2f_2$ and $F_1 \nforks_F^{\ACF} F_2$. By
identifying $F, F_1, F_2$ with their images in $K$ we can assume that all
embeddings $f_i, g_i$ are inclusions.

We would like to extend $E_1 \cup E_2$ to a homomorphism from $\G_a(K)$ to
$\G_m(K)$. Note that $E_1 \cup E_2$ extends to a homomorphism from the group
$F_1 + F_2$ generated by $F_1$ and $F_2$ to $\G_m(K)$.  Indeed, it is enough to
see that it is well defined. Assume that $a_i, a_i' \in F_i$ and $a_1 + a_2 =
a_1' + a_2'$. But then $a_1 - a_1' = a_2' - a_2 \in F_1 \cap F_2$. By
$\ACF$-independence, $F_1 \cap F_2 = F$. It follows that $E_1(a_1 - a_1') =
E_2(a_2' - a_2)$ and therefore 
$$\frac {E_1(a_1)} {E_1(a_1')} = \frac {E_2(a_2')} {E_2(a_2)}.$$
But this implies that $E_1(a_1)E_2(a_2) = E_1(a_1')E_2(a_2')$.

The final step is to extend this homomorphism from $F_1 + F_2$ to $\G_m(K)$ by
the divisibility of the latter, using Fact~\ref{divisibility fact}. So EA-fields
are disjoint amalgamation bases.

For the converse, suppose that $F$ is an E-field which is not an EA-field. Let
$F_1 = F^\alg$. We can extend the exponential map on $F$ to some exponential map
$E_1$ on $F_1$, and in particular for every $a$ which is algebraic over $F$ we
have that $E_1(a)$ is algebraic over $F$. Now let $a \in F^\alg \minus F$ and
let $F_2 = F(a,t)$, with $t$ transcendental over $F$. Then there is an
exponential map $E_2$ on $F_2$ extending that on $F$ such that $E_2(a) = t$.
Then $F_1$ and $F_2$ cannot be amalgamated over $F$.
\end{proof}

The fact that algebraically closed exponential fields are disjoint amalgamation bases allows us
to characterise $\eacl$ in existentially closed models.

\begin{defn}
Suppose that $F$ is an EA-field, and $A \subs F$. Write $\EAgen{A}_F$  (or just $\EAgen{A}$ if $F$ is clear) for the smallest EA-subfield of $F$ containing $A$, that is, the intersection of all the EA-subfields of $F$ containing $A$.
\end{defn}

\begin{corollary}\label{eacl characterization}
Let $F$ be an existentially closed exponential field and $A \subseteq F$. Then
$\eacl(A) = \EAgen{A}_F$.
\end{corollary}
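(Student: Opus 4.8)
We want to show that for an existentially closed exponential field $F$ and $A \subseteq F$, we have $\eacl(A) = \EAgen{A}_F$. The strategy is to prove the two inclusions separately, using Fact~\ref{dab = eacl} together with Theorem~\ref{amalg base is EA-field} to bridge between $\eacl$ (a model-theoretic notion) and $\EAgen{\cdot}$ (an algebraic one).

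For the inclusion $\EAgen{A}_F \subseteq \eacl(A)$, the key point is that $\eacl(A)$ is itself an EA-subfield of $F$: it is certainly a subfield (field operations are $\exists$-definable over any set, and an element algebraic over $\eacl(A)$ in the field sense lies in a finite $\exists$-definable set, hence in $\eacl(\eacl(A)) = \eacl(A)$ by the idempotency remarked after the definition of $\eacl$), and it is closed under $E$ (since $E$ is a function symbol, $\{y : y = E(a)\}$ is a singleton $\exists$-definable set for each $a \in \eacl(A)$, so $E(a) \in \eacl(A)$). Since $\eacl(A)$ is an EA-subfield containing $A$, minimality of $\EAgen{A}_F$ gives $\EAgen{A}_F \subseteq \eacl(A)$.

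For the reverse inclusion $\eacl(A) \subseteq \EAgen{A}_F$, I would argue by contradiction: suppose $b \in \eacl(A) \setminus \EAgen{A}_F$. The idea is that $\EAgen{A}_F$, being an EA-field, is by Theorem~\ref{amalg base is EA-field} a \emph{disjoint} amalgamation base, so by Fact~\ref{dab = eacl} it is $\exists$-algebraically closed in an existentially closed extension $M$ of $F$ (note $F$ itself is existentially closed, so we may take $M = F$, or pass to an ec extension — the value of $\eacl$ does not depend on this choice). Hence $\eacl(\EAgen{A}_F) = \EAgen{A}_F$. Since $A \subseteq \EAgen{A}_F$ we get $\eacl(A) \subseteq \eacl(\EAgen{A}_F) = \EAgen{A}_F$, contradicting $b \notin \EAgen{A}_F$.

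**Main obstacle.** The technical care needed is in verifying that $\eacl(A)$ really is closed under the field algebraic closure operation and under $E$, i.e.\ that it is genuinely an EA-subfield — this relies on the idempotency of $\eacl$ and its independence from the ambient ec model, both of which are discussed (with the finite-$\exists$-definable-set trick via Fact~\ref{ec models fact}(iv)) earlier in the excerpt, so I would simply invoke that discussion. The only other subtlety is making sure the monotonicity $\eacl(A) \subseteq \eacl(\EAgen{A}_F)$ and the identification of $\eacl$ over the disjoint amalgamation base $\EAgen{A}_F$ via Fact~\ref{dab = eacl} are applied in a common ec model containing $F$; since $F$ is ec this is automatic.
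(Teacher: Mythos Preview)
Your proof is correct and follows essentially the same route as the paper: show that $\eacl(A)$ is an EA-subfield (giving $\EAgen{A}_F \subseteq \eacl(A)$), and then use Theorem~\ref{amalg base is EA-field} together with Fact~\ref{dab = eacl} to see that every EA-subfield is $\exists$-algebraically closed, so $\eacl(A) \subseteq \eacl(\EAgen{A}_F) = \EAgen{A}_F$. The paper compresses this into two sentences; your version just spells out the details (and the contradiction framing in the second inclusion is superfluous, since the monotonicity argument is already direct).
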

\begin{proof}
It is clear that $\eacl$-closed sets are algebraically closed $E$-fields. The
converse follows immediately from Theorem \ref{amalg base is EA-field} and
Fact \ref{dab = eacl}.
\end{proof}





This allows us to characterise the \JEPrefinement s of $\Tefield$.

\begin{corollary}
\label{completion-corollary}
\begin{enumerate}[(i)]

\item If $F$ is an EA-field, then $\Tefield \cup \etheory(F)$ is a
\JEPrefinement\ of $\Tefield$. 

\item There are $2^{\aleph_0}$ many \JEPrefinement s of $\Tefield$ (modulo
companions), corresponding to the minimal EA-fields (those EA-fields $F$ such
that $F = \EAgen{0}_F$).
\end{enumerate}
\end{corollary}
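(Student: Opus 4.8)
Part~(i) is immediate: by Theorem~\ref{amalg base is EA-field} every EA-field is an amalgamation base for $\Emb(\Tefield)$, and then Lemma~\ref{amalg base JEP} says that $\Tefield\cup\etheory(F)$ is a \JEPrefinement\ of $\Tefield$. So all the content is in part~(ii), where the plan is to exhibit a bijection between \JEPrefinement s of $\Tefield$ modulo companions and minimal EA-fields up to isomorphism, and then count the latter.

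The two maps are the evident ones. A minimal EA-field $F$ goes to the companion class of $\Tefield\cup\etheory(F)$, which is a \JEPrefinement\ by part~(i). A \JEPrefinement\ $T'$ goes to the isomorphism type of $\EAgen{0}_M$, where $M$ is any existentially closed model of $T'$ (note $M$ is then an existentially closed model of $\Tefield$, so by Corollary~\ref{eacl characterization} this is $\eacl(\emptyset)$, and it is itself a minimal EA-field). The first thing I would prove is that the second map is well defined, i.e.\ $\EAgen{0}_M\cong\EAgen{0}_{M'}$ for any two existentially closed models $M,M'$ of $T'$. Using the JEP of $T'$, embed $M$ and $M'$ into a common model of $T'$ and enlarge it to an existentially closed model $N$ of $T'$; then the routine ``smallest EA-subfield'' argument gives $\EAgen{0}_M=\EAgen{0}_N=\EAgen{0}_{M'}$ as subfields of $N$ (each of $\EAgen{0}_M,\EAgen{0}_{M'}$ is an EA-subfield of $N$, hence contains $\EAgen{0}_N$; conversely $\EAgen{0}_N$ is algebraically closed and closed under $E$, hence an EA-subfield of each of $M$ and $M'$, hence contained in both). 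The crux here is that $\EAgen{0}$ is absolute --- computed the same way in every existentially closed extension --- which is precisely the phenomenon behind Corollary~\ref{eacl characterization} and ultimately behind the disjoint amalgamation of EA-fields in Theorem~\ref{amalg base is EA-field}.

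Next I would verify that the two maps are mutually inverse. Starting from a minimal EA-field $F$: since trivially $F\models\etheory(F)$, also $F\models\Tefield\cup\etheory(F)=:T'$, so $F$ embeds into a monster model $\M$ of $T'$; being algebraically closed, $F$ is an EA-subfield of $\M$, so $\EAgen{0}_\M\subseteq F$, and as $\EAgen{0}_\M$ is then an EA-subfield of $F$ containing the prime field, $F=\EAgen{0}_F\subseteq\EAgen{0}_\M$; hence $\EAgen{0}_\M=F$. Starting from a \JEPrefinement\ $T'$ with existentially closed model $M$ and $F:=\EAgen{0}_M$: one checks that $M$ is also an existentially closed model of $\Tefield\cup\etheory(F)$ (it models $\Tefield$; it models $\etheory(F)$ since $F\subseteq M$; and any existentially closed model of $\Tefield$ satisfying extra existential axioms is an existentially closed model of the extended theory). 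Then $T'$ and $\Tefield\cup\etheory(F)$ are both \JEPrefinement s of $\Tefield$ having $M$ as an existentially closed model, so by the remark following Lemma~\ref{amalg base JEP} their universal consequences both equal the universal theory of $M$, and hence they are companions.

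Finally I would count minimal EA-fields up to isomorphism. Each is countably generated, hence countable, so there are at most $2^{\aleph_0}$. For the lower bound, to each set $S$ of primes attach an exponential structure $E_S$ on $\overline{\mathbb Q}$ with $E_S(1/p)=1$ for $p\in S$ and $E_S(1/p)$ a primitive $p$-th root of unity for $p\notin S$: define $E_S$ on $\G_a(\mathbb Q)$ as the composite $\mathbb Q\twoheadrightarrow\mathbb Q/\mathbb Z[1/p:p\in S]\hookrightarrow\overline{\mathbb Q}^{\times}$ of the quotient map with an embedding of this torsion group into the roots of unity, and extend it to $\G_a(\overline{\mathbb Q})$ using Fact~\ref{divisibility fact}. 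Since $\overline{\mathbb Q}$ has no proper algebraically closed subfield containing the prime field, $(\overline{\mathbb Q},E_S)$ equals its own $\EAgen{0}$ and so is a minimal EA-field; and any isomorphism $(\overline{\mathbb Q},E_S)\to(\overline{\mathbb Q},E_{S'})$ fixes $\mathbb Q$ pointwise and carries $E_S(1/p)$ to $E_{S'}(1/p)$, forcing $S=S'$. So there are exactly $2^{\aleph_0}$ minimal EA-fields up to isomorphism, and the bijection finishes the proof. The main obstacle is the bijection --- and within it the absoluteness of $\EAgen{0}$ on which all three verifications lean; the counting is elementary.
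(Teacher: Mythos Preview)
Your proof is correct and follows the same overall strategy as the paper: identify \JEPrefinement s (modulo companions) with isomorphism classes of minimal EA-fields, and then count the latter. You develop the bijection much more carefully than the paper does --- the paper essentially just notes that every existentially closed $K$ models $\Tefield\cup\etheory(\EAgen{0}_K)$ and that non-isomorphic minimal EA-fields cannot be jointly embedded, leaving the rest to the reader --- whereas you verify well-definedness and both inverse compositions explicitly, leaning on the remark after Lemma~\ref{amalg base JEP}.

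The one genuinely different ingredient is the lower-bound construction. The paper builds, for each sequence $(q_n)_{n\ge 2}$ of nonzero rationals, an E-field with $E(1)$ transcendental and $E(E(1)^n)=q_n$; the sequence is then recoverable from the minimal EA-subfield because any E-field isomorphism fixes $\Q$ and sends $E(1)$ to $E(1)$. Your construction instead stays inside $\overline{\Q}$ and varies the kernel of $E$ on $\Q$ according to a set $S$ of primes, so that $S$ is read off as $\{p:E(1/p)=1\}$. Both are elementary; your version has the minor advantage that the minimal EA-field is $(\overline{\Q},E_S)$ on the nose, with no passage through a larger E-field and its EA-closure, while the paper's version exhibits minimal EA-fields of positive transcendence degree (foreshadowing the remark that follows the corollary).
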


\begin{proof}
Part (i) follows from Lemma~\ref{amalg base JEP}.

For part (ii), every existentially closed exponential field $K$ is a model of
$\Tefield \cup \etheory(F)$ where $F = \EAgen{0}_K = \EAgen{0}_F$. On the other
hand if $F_1$ and $F_2$ are non-isomorphic minimal EA-fields, then they cannot
be jointly embedded in an E-field. It follows that $\Tefield \cup \etheory(F_1)$
and $\Tefield \cup \etheory(F_2)$ are not companions.

Any minimal EA-field must be countable, so there are at most $2^{\aleph_0}$ of
them. To construct $2^{\aleph_0}$ non-isomorphic ones, note that for each
infinite sequence $(q_n)_{n \in \N, n \ge 2}$ of nonzero rational numbers, there
is an E-field where $E(1)$ is transcendental and $E(E(1)^n) = q_n$ for $n \ge
2$.
\end{proof}

\begin{remark}
There is a similarity with the completions of the theory ACFA of existentially closed fields with an automorphism. There the completions are given by specifying the action of the automorphism on the algebraic closure of the empty set. The main difference here is that the underlying field of a minimal EA-field can have any countable transcendence degree, not just 0 as in ACFA.
\end{remark}

Finally in this section we can use amalgamation bases to give an easy proof of instability. Of course we prove stronger results later.
\begin{corollary}
Let $T$ be any \JEPrefinement\ of \Tefield. Let $A$ be any subset of any model
of $T$. Then there are $2^{\aleph_0 + |A|}$ maximal existential types over $A$
which are realised in $\ECcat(T)$. In particular $\ECcat(T)$ is not stable (in
the sense of type-counting).
\end{corollary}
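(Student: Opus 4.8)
The plan is to exhibit $2^{\aleph_0 + |A|}$ many distinct maximal existential types over $A$ by controlling the exponential structure on a single transcendental element together with its iterated exponentials. First I would reduce to the case where $A$ is an EA-field: replacing $A$ by $\EAgen{A}$ inside a monster model of $T$ only changes the type count by a bounded amount (types over $\EAgen{A}$ restrict to types over $A$, and $|\EAgen{A}| = \aleph_0 + |A|$), and by Corollary~\ref{eacl characterization} this is the $\eacl$ of $A$. Since $A$ embeds in a model of $T$, which embeds in an existentially closed one, we may assume $A$ sits inside the monster model $\M$.

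Next I would fix an element $u$ transcendental over $A$ (available since $\M$ is existentially closed, hence algebraically closed and of large transcendence degree over $A$), and for each parameter I want to prescribe the values $E(u), E(E(u)), E(E(E(u))), \dots$ along the tower of iterated exponentials. Concretely, just as in the proof of Corollary~\ref{completion-corollary}(ii), for each infinite sequence $\bar q = (q_n)_{n \ge 2}$ of nonzero rationals one builds an $E$-field extension $F_{\bar q}$ of $A$ in which $u$ is transcendental, $E(u)$ is transcendental over $A(u)$, and $E(E(u)^n) = q_n$ for $n \ge 2$; this is possible because the relevant subvarieties are additively free over $A$, so the exponential map can be defined on the generated subgroup and then extended to all of $\G_a$ by divisibility (Fact~\ref{divisibility fact}). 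Each $F_{\bar q}$ is an $E$-field extension of $A$, hence embeds (over $A$) into $\M$, and the tuple $(u, E(u))$ there has a well-defined maximal existential type $p_{\bar q}$ over $A$.

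The key step is then to check that distinct sequences give distinct types. If $\bar q \ne \bar q'$, say $q_n \ne q'_n$, then the existential formula $\exists v\, \exists w\, [\, E(x) = y \wedge w = y^n \wedge E(w) = q_n \,]$ — more carefully, a formula asserting that $E$ of (the $n$th multiplicative power of the second coordinate) equals $q_n$ — is in $p_{\bar q}$ but not in $p_{\bar q'}$, since in any $E$-field the value $E(E(u)^n)$ is determined once $E(u)$ and the field structure are fixed, and it equals $q_n$ rather than $q'_n$. Hence the map $\bar q \mapsto p_{\bar q}$ from the $2^{\aleph_0}$ rational sequences to maximal existential types over $A$ realised in $\ECcat(T)$ is injective, giving $2^{\aleph_0}$ types; combined with the trivial lower bound of $|A|$ (e.g.\ the types of the $\aleph_0 + |A|$ elements of $A$ itself, or elements algebraic over various finite subsets), and noting the total number of existential types over $A$ is at most $2^{\aleph_0 + |A|}$, we get exactly $2^{\aleph_0 + |A|}$. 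Type-instability in the sense that $|S(A)| > |A| + |T|$ for some $A$ follows at once.

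The main obstacle I anticipate is the bookkeeping in the construction of $F_{\bar q}$: one must verify that the successive conditions $E(u) = $ (a new transcendental), $E(E(u)^n) = q_n$ are jointly realisable, i.e.\ that at each stage the subgroup of $\G_a$ generated so far maps consistently into $\G_m$ and the target values are compatible with multiplicativity of $E$ — this is exactly the additive-freeness computation, and it is the only place where anything beyond formal nonsense happens; everything else (reduction to $A$ an EA-field, embedding $F_{\bar q}$ into $\M$, distinguishing the types by an explicit existential formula) is routine given the earlier results.
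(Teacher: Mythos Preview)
Your argument has a cardinal-arithmetic gap. The construction you describe---prescribing $E(E(u)^n) = q_n$ for sequences of nonzero rationals---produces only $2^{\aleph_0}$ distinct types over $A$, regardless of $|A|$. You then write that combining this with ``the trivial lower bound of $|A|$'' yields $2^{\aleph_0+|A|}$, but that is false: $\max(2^{\aleph_0},|A|)$ can be much smaller than $2^{\aleph_0+|A|}$ (for instance when $|A|=2^{\aleph_0}$, your bound is $2^{\aleph_0}$ while the target is $2^{2^{\aleph_0}}$). So the full statement of the corollary is not established for large $A$; only the instability clause survives, since over a countable $A$ you do get $2^{\aleph_0}>\aleph_0$ types.

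The paper's proof differs in exactly the place needed to repair this, and is also simpler. It takes a single $x$ transcendental over $A$ and prescribes the values $E(x^n)$, for $n\in\N^+$, independently in $(A\cup\Q)\setminus\{0\}$---that is, the target values are allowed to range over $A$, not just over $\Q$. This yields $(|A|+\aleph_0)^{\aleph_0}$ choices directly, so the dependence on $|A|$ is built into the construction rather than patched on afterwards. Your two-level construction via $E(E(u)^n)$, borrowed from the count of \JEPrefinement s in Corollary~\ref{completion-corollary}, is more complicated than necessary and, because you kept the target values rational, loses precisely the factor that matters. The fix is simply to let your prescribed values $q_n$ run over $(A\cup\Q)\setminus\{0\}$ instead of over $\Q\setminus\{0\}$; once you do that, there is no need for the intermediate transcendental $E(u)$ either.
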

\begin{proof}
Given $A$ and a variable $x$ we can specify that $x$ is transcendental over $A$
and then we can specify the values of $E(x^n)$ in $(A \cup \Q) \minus \{0\}$
independently for each $n \in \N^+$. Each one of these extends to a distinct
maximal existential type. This gives the maximum number of types.
\end{proof}

\section{Independence and higher amalgamation}

We now define a notion of independence and proceed to establish higher
amalgamation of independent systems. 
\begin{definition}
\label{independence-definition}
Let $F$ be an EA-field. Let $A, B, C \subseteq F$
be subsets.
Recall that $\langle A \rangle^{\EA}$ is the smallest EA-subfield of $F$ containing $A$, and $\EAgen{AC}$ means $\EAgen{A \cup C}$.

We say that $A$ and $B$ are {\em independent} over $C$ in $F$ and write $A
\nforks_C B$ if $\EAgen{AC}$ and $\EAgen{BC}$ are \ACF-independent over $\EAgen{C}$.

\end{definition}

\begin{remark}
This notion of independence is quite weak. For example, it does not look at the behaviour of \emph{logarithms} of elements of $A$, $B$, or $C$, that is, points whose exponentials lie in those subsets. Nor does it look at how $A$ and $B$ might otherwise be related in $\EAgen{ABC}$. However it is appropriate for this setting. Stronger independence notions, suitable for exponentially closed fields, were explored in the thesis of \cite{HendersonThesis}.
\end{remark}

We now introduce the relevant definitions for higher amalgamation. More details can
be found for example in \cite{gkk-amalgamation-functors} or \citet[Chapter
XII]{shelah}.

Let $n \ge 3$ be an integer, which we view as a set, that is, $n = \{0, \dots ,
n-1\}$. We view $\mathcal P(n)$ and $\mathcal P^-(n) = \mathcal P(n)
\minus n$ as categories where there is a unique morphism from $a$ to $b$ if $a
\subseteq b$. We consider functors from these categories to $\Emb(\Tefield)$.
Such functors are called $\mathcal P(n)$ and $\mathcal P^-(n)$-systems
respectively. Given a system $F$ and $a \in \mathcal P(n)$, we denote by $F_a$
the exponential field associated to $a$. For $a \subseteq b$ there is an
embedding $F_a \to F_b$. We normally think of this embedding as being an
inclusion, that is, that $F_a \subseteq F_b$. We say that $F$ is {\em
independent} if for every $a \subseteq b$ we have $$F_a \nforks_{\bigcup_{c
\subsetneq a} F_c} \bigcup_{a \not \subseteq d \subseteq b} F_d$$ as subsets of
the exponential field $F_b$. As an example, in an independent $\mathcal
P(3)$-system we have in particular $F_{\{0,1\}} \nforks_{F_{\{0\}}F_{\{1\}}}
F_{\{0, 2\}} F_{\{1, 2\}}$ as subsets of $F_{\{0, 1, 2\}}$.

If we forget the exponential structure of the given independent system, we end
up with an independent system of algebraically closed fields.  Such systems --
more generally independent systems in stable and simple theories -- have been
extensively studied. We mention two facts concerning such systems. Both are
special cases for \ACF\ of theorems about independent amalgamations in stable
theories.

\begin{fact} [Folklore, see e.g. \cite{dpkm-group-configuration}]
\label{acf-n-amalgamation}
Any independent $\mathcal P^-(n)$-system of algebraically closed fields can be
completed to an independent $\mathcal P(n)$-system.
\end{fact}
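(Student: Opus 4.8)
The plan is to show that, up to isomorphism, every independent $\mathcal P^-(n)$-system of algebraically closed fields is the ``free'' system built from transcendence bases, and that the free system extends in an obvious way to an independent $\mathcal P(n)$-system. Throughout I work inside a sufficiently large algebraically closed field $\Omega$ of characteristic $0$ into which all fields in sight embed, and for a face $a$ of a system $(F_a)$ I abbreviate $F_a^- := \acl\bigl(\bigcup_{c\subsetneq a} F_c\bigr)$ and let $T_a\subseteq F_a$ be a transcendence basis of $F_a$ over $F_a^-$, so that $F_a = \acl(F_a^-\cup T_a)$ with $T_a$ algebraically independent over $F_a^-$. The key point is the following lemma: in an independent $\mathcal P(m)$-system $(F_b)_{b\subseteq m}$, the set $\bigcup_{b\subseteq m}T_b$ is algebraically independent over $F_\emptyset$.

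To prove the lemma I would enumerate $\mathcal P(m)$ as $\emptyset = a_0, a_1, \dots, a_{2^m-1} = m$ along a linear extension of $\subseteq$ and argue by induction on $j$ that $\acl\bigl(F_\emptyset\cup\bigcup_{i\le j}T_{a_i}\bigr) = \acl\bigl(\bigcup_{i\le j}F_{a_i}\bigr)$ and that $\bigcup_{i\le j}T_{a_i}$ is algebraically independent over $F_\emptyset$. The inductive step reduces to showing that $T_{a_{j+1}}$ is algebraically independent over $\acl\bigl(\bigcup_{i\le j}F_{a_i}\bigr)$; since every $c\subsetneq a_{j+1}$ occurs earlier in the enumeration, $F_{a_{j+1}}^-\subseteq\acl\bigl(\bigcup_{i\le j}F_{a_i}\bigr)$, and so this is equivalent to $F_{a_{j+1}}\nforks_{F_{a_{j+1}}^-}^{\ACF}\bigcup_{i\le j}F_{a_i}$. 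But independence of the system, applied to the pair $a_{j+1}\subseteq m$, gives exactly $F_{a_{j+1}}\nforks_{F_{a_{j+1}}^-}^{\ACF}\bigcup_{a_{j+1}\not\subseteq d\subseteq m}F_d$, and in a linear extension $a_i\prec a_{j+1}$ forces $a_{j+1}\not\subseteq a_i$, so $\bigcup_{i\le j}F_{a_i}$ lies inside the latter union; monotonicity of nonforking in $\ACF$ (that is, of transcendence degree) then closes the step.

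Granting the lemma, let $(F_a)_{a\subsetneq n}$ be an arbitrary independent $\mathcal P^-(n)$-system. I would construct an isomorphic copy inside $\Omega$ ``in free position'', one face at a time along a linear extension of $\subseteq$. At face $a$, having already produced fields $F'_b\subseteq\Omega$ and isomorphisms $\eta_b\colon F_b\to F'_b$ (for $b\prec a$) compatible with the system maps, I glue the $F'_b$ with $b\subsetneq a$ into a single copy $L'_a\subseteq\Omega$ of the compositum $L_a := \langle F_b : b\subsetneq a\rangle\subseteq F_a$. This is possible and essentially unique because, by the lemma applied to the sub-$\mathcal P(|a|)$-system $(F_b)_{b\subseteq a}$ (which is again independent, being a restriction of the given system), the fields $F_b$ ($b\subsetneq a$) sit inside $F_a$ as a nonforking amalgam of the $F_b$; such amalgams in $\ACF$ are unique up to isomorphism (Fact~\ref{ACF amalg}, iterated) and satisfy $F_{b_1}\cap F_{b_2} = F_{b_1\cap b_2}$ (using the standard fact that $A\nforks_C^{\ACF}B$ implies $\acl(AC)\cap\acl(BC) = \acl(C)$). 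I then extend $L_a\to L'_a$ to an isomorphism $\eta_a\colon F_a\to F'_a$, where I put $F'_a := \acl(L'_a\cup T'_a)$ and $T'_a\subseteq\Omega$ is a set of $|T_a|$ elements chosen algebraically independent over the algebraic closure of everything placed so far; this is legitimate since in $\ACF$ the isomorphism type of $\acl(\text{base}\cup(\text{transcendence basis}))$ over the base depends only on the cardinality of the basis. Once all faces are processed, $\bigcup_{a\subsetneq n}T'_a$ is algebraically independent over $F'_\emptyset$ by construction, and putting $F'_n := \acl\bigl(\bigcup_{a\subsetneq n}F'_a\bigr)$ yields a $\mathcal P(n)$-system whose restriction to $\mathcal P^-(n)$ is isomorphic to the given one via $(\eta_a)$; its independence is a routine transcendence-degree computation, since for each $a\subsetneq n$ both $\td\bigl(F'_a/\acl(\bigcup_{c\subsetneq a}F'_c)\bigr)$ and $\td\bigl(F'_a/\acl(\bigcup_{a\not\subseteq d}F'_d)\bigr)$ equal $|T'_a|$, while the conditions for $b\subsetneq n$ are inherited from $(F_a)$.

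The step I expect to be the main obstacle is the combinatorial bookkeeping in this re-layout. One has to use that intersections like $F_{b_1}\cap F_{b_2}$ are \emph{not} part of the abstract data of a $\mathcal P^-(n)$-system --- so that one really is free to re-present it with its faces in general position --- and one has to verify that the gluing isomorphisms $\eta_a$, built face by face, stay mutually compatible; this is precisely where the lemma, via uniqueness of nonforking amalgams, does its work, and where the argument is specific to $\ACF$, since it is the transcendence-degree pregeometry that makes the ``free'' extension type over $\bigcup_{b\subsetneq a}F_b$ carry no obstruction to compatibility --- in a general stable theory $n$-amalgamation can fail. A shorter route, and the one indicated by the reference, is to invoke the general theory of amalgamation of independent systems in stable theories: stability gives stationarity over algebraically closed bases, and $\ACF$ has no higher uniqueness obstruction (no nontrivial definable groupoid), so $n$-amalgamation holds for every $n$; see \cite{dpkm-group-configuration}.
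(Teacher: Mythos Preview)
The paper does not supply a proof of this statement: it is recorded as a Fact with a reference to \cite{dpkm-group-configuration} and then used as a black box in Theorem~\ref{n-amalgamation}. So there is no argument in the paper to compare yours against directly; the cited route goes through the general machinery for independent systems in stable theories rather than anything specific to $\ACF$.

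Your transcendence-basis approach is the natural hands-on argument for $\ACF$, and your lemma (that the $T_a$ are jointly algebraically independent over $F_\emptyset$) is correct, with a clean proof. Two remarks. First, the restriction to characteristic $0$ is neither in the statement nor needed. Second, the step you yourself flag as the main obstacle --- that the face-by-face isomorphisms $\eta_a$ glue --- is genuinely the crux, and it is not settled by iterating Fact~\ref{ACF amalg} alone. Concretely, at a $3$-element face, once $\eta_{\{0,1\}}$, $\eta_{\{0,2\}}$, $\eta_{\{1,2\}}$ have been fixed, any isomorphism $\acl(F_{\{0,1\}}F_{\{0,2\}})\to\acl(F'_{\{0,1\}}F'_{\{0,2\}})$ extending $\eta_{\{0,1\}}$ and $\eta_{\{0,2\}}$ need not restrict on $\acl(F_{\{1\}}F_{\{2\}})$ to $\eta_{\{1,2\}}|_{\acl(F_{\{1\}}F_{\{2\}})}$; they agree on $F_{\{1\}}\cup F_{\{2\}}$ but may differ by an element of $\Aut\bigl(\acl(F_{\{1\}}F_{\{2\}})/F_{\{1\}}F_{\{2\}}\bigr)$. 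Showing the choice can be made compatibly is exactly an instance of higher uniqueness in $\ACF$, not merely stationarity over a single algebraically closed base. It \emph{does} hold in $\ACF$ (one concrete reason: the relevant restriction map of absolute Galois groups surjects, since the smaller compositum is relatively algebraically closed in the larger; the abstract reason is the absence of nontrivial definable groupoids that you mention), but your sketch invokes this without establishing it. So your outline is correct, and different in flavour from the cited general-stable-theory proof, but the place you identify as the obstacle really does require an extra ingredient beyond iterated $2$-uniqueness.
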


\begin{fact} [Fact XII.2.5 of \citet{shelah}]
\label{generalised-heir}
Let $F$ be an independent $\mathcal P(n)$-system of algebraically closed fields
and $t \subseteq n$.  For $i = 1, \dots , m$ let $s(i) \in \mathcal P(n)$ and
let $\bar a_i \in F_{s(i)}$. Assume that for some formula $\phi$ we have $F_n
\models \phi(\bar a_1, \dots , \bar a_m)$. Then there are $\bar a_i' \in F_{s(i)
\cap t}$ such that $F_n \models \phi(\bar a_1', \dots , \bar a_m')$ and if $s(i)
\subseteq t$, then $\bar a_i' = \bar a_i$.
\end{fact}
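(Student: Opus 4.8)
The statement is the special case for $\ACF$ of a general fact about independent systems in stable theories (indeed it is Fact~XII.2.5 of \citet{shelah}), so the shortest route is simply to cite it; below I outline how one would argue it directly in this setting. The plan is to induct on $|n \setminus t|$. If $n = t$ there is nothing to prove ($\bar a_i' = \bar a_i$ works), so suppose $n \setminus t \neq \emptyset$ and fix $j \in n \setminus t$, writing $n' = n \setminus \{j\}$. The first step is to reduce to replacing each tuple $\bar a_i$ with $j \in s(i)$ by some $\bar a_i'' \in F_{s(i) \setminus \{j\}}$ (keeping $\bar a_i'' = \bar a_i$ when $j \notin s(i)$) so that $\phi$ still holds. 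This suffices, because the subfamily $(F_a)_{a \subseteq n'}$ is again an independent $\mathcal P(n')$-system — its defining conditions $F_a \nforks_{\bigcup_{c \subsetneq a} F_c} \bigcup_{a \not\subseteq d \subseteq b} F_d$ for $a \subseteq b \subseteq n'$ are a sub-collection of those of $F$ — and $\ACF$ has quantifier elimination, so $F_{n'} \subseteq F_n$ is elementary; once all arguments lie in $F_{n'}$ one has $F_{n'} \models \phi$ and applies the induction hypothesis inside the $\mathcal P(n')$-system to descend from $s(i) \setminus \{j\}$ to $(s(i) \setminus \{j\}) \cap t = s(i) \cap t$.

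This leaves the single step $t = n \setminus \{j\}$. Here $D := F_t = \bigcup_{u \subseteq t} F_u$ (since $F_t$ contains every such $F_u$), and $F_t \preceq F_n$ by quantifier elimination, so it is enough to produce the new tuples with $\phi$ holding \emph{in $F_t$}, the tuples that stay fixed already lying in $D$. Writing $\bar a$ for the concatenation of the $\bar a_i$ with $j \in s(i)$, $u(i) = s(i) \setminus \{j\}$ for those $i$, and $C = \acl\big(\bigcup_{j \in s(i)} F_{u(i)}\big) \subseteq D$, the technical core I would prove is: $\tp(\bar a / D)$ does not fork over $C$, and admits a realisation over $D$ whose $i$-th block lies in $F_{u(i)}$. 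Granting this, substituting the fixed tuples into $\phi$ gives a formula over $D$ satisfied by $\bar a$, hence by the realising tuple, whose blocks are the required $\bar a_i'$.

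For the core: each defining condition for an index $a \ni j$ unwinds, through Definition~\ref{ACF indep defn}, into transcendence-degree equalities; combining those for all $a$ with $j \in a$, using transitivity and the transcendence-degree form of base-monotonicity of $\ACF$-independence (if $A \nforks_C B$ and $C \subseteq C' \subseteq B$ then $A \nforks_{C'} B$), and bookkeeping which subfields lie inside $D$, should yield that $\tp(\bar a / D)$ does not fork over $C$; and since $C$ is an algebraically closed field — hence a model of $\ACF$ with $C \preceq D$ — non-forking over a model gives finite satisfiability in $C$. The remaining point, that the realisation can be taken with $i$-th block inside $F_{u(i)}$ rather than merely somewhere in $C$, I would get from the fact that the $F_{u(i)}$ form an independent sub-configuration of the ``lower half'' $(F_u)_{u \subseteq t}$, invoking Fact~\ref{acf-n-amalgamation} and the uniqueness of independent amalgamation. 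I expect this placement step together with the transcendence-degree bookkeeping to be the main obstacle — which is exactly why, in the paper, this is stated as a cited fact rather than reproved.
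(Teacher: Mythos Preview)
The paper does not prove this statement at all: it is stated as a Fact with a citation to \citet{shelah} and then used directly in the proof of Theorem~\ref{n-amalgamation}. Your proposal correctly identifies this and takes the same route (cite it), so there is nothing to compare; the outline you add is a reasonable sketch of the standard inductive reduction, and you are right that the ``placement'' of each block into the correct $F_{s(i)\cap t}$, rather than merely into the joint algebraic closure, is where the real work in Shelah's argument lies.
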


We can now extend amalgamation to independent $\mathcal P^-(n)$-systems of
EA-fields.

\begin{theorem}[$n$-amalgamation]
\label{n-amalgamation}
Any independent $\mathcal P^-(n)$-system of EA-fields can be completed to an independent $\mathcal P(n)$-system.
\end{theorem}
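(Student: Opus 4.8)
The strategy is to build the completing field $F_n$ in two stages: first amalgamate the underlying algebraically closed fields using Fact~\ref{acf-n-amalgamation}, and then define an exponential map on the result that restricts correctly to each $F_a$ for $a \in \mathcal P^-(n)$. So I would start by forgetting the exponential structure and applying Fact~\ref{acf-n-amalgamation} to the $\mathcal P^-(n)$-system of algebraically closed fields $(F_a)_{a \in \mathcal P^-(n)}$, obtaining an algebraically closed field $K$ together with embeddings $F_a \to K$ that make the $\mathcal P(n)$-square commute and such that the system is \ACF-independent. Concretely $K$ will be (the algebraic closure of) the compositum of all the $F_a$, and the top independence condition $F_{n-1}' \nforks^{\ACF}$ (the union of the other maximal faces) will hold inside $K$, where $F_{n-1}'$ denotes one of the corank-one faces, etc.; in fact the defining independence conditions hold for \emph{every} inclusion $a \subseteq b$ in $\mathcal P(n)$, not just the top one, by the standard theory.

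The main work is the exponential map. On the subgroup $G = \sum_{a \in \mathcal P^-(n)} \G_a(F_a) \subseteq \G_a(K)$ I would define $E := \bigcup_{a} E_a$ and check it is a well-defined homomorphism; then extend it to all of $\G_a(K)$ using divisibility of $\G_m(K)$ (Fact~\ref{divisibility fact}), exactly as in the proof of Theorem~\ref{2-amalgamation}. Well-definedness is the crux: if $\sum_{a} v_a = \sum_{a} w_a$ with $v_a, w_a \in \G_a(F_a)$, I must show $\prod_a E_a(v_a) = \prod_a E_a(w_a)$, equivalently that $\sum_a (v_a - w_a) = 0$ forces $\prod_a E_a(v_a - w_a) = 1$. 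Setting $u_a = v_a - w_a \in \G_a(F_a)$, I need: whenever $\sum_{a \in \mathcal P^-(n)} u_a = 0$ with $u_a \in F_a$, there is a decomposition witnessing that this relation is "generated by lower-dimensional relations", i.e. one can rewrite it so that each cancellation happens inside some $F_{a \cap b} = F_a \cap F_b$ where the two exponential maps already agree. This is precisely where \ACF-independence of the system is used, and the clean tool is Fact~\ref{generalised-heir}: the linear relation $\sum_a u_a = 0$ is a first-order fact about the tuples $(u_a)$ living in the faces $F_a$, so applying Fact~\ref{generalised-heir} repeatedly (peeling off one coordinate $i \in n$ at a time, taking $t = n \minus \{i\}$) lets me replace the $u_a$ for $a \ni i$ by elements of $F_{a \minus \{i\}}$ while preserving the relation, and pushes the whole relation down into the proper subsystem; by induction on $n$ the relation is a consequence of the pairwise ones, on which $E$ is unambiguous.

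The expected obstacle is making this descent argument fully rigorous: one has to be careful that the substitutions provided by Fact~\ref{generalised-heir} are compatible across the different faces (the system is a \emph{functor}, so the $u_a$ are not independent data — $u_a$ restricted along $a \subseteq b$ must match), and that after pushing down one coordinate the resulting relation is again of the form handled by the inductive hypothesis for a $\mathcal P^-(n-1)$ or $\mathcal P(n-1)$-subsystem, which is itself independent. An alternative, perhaps cleaner, route is to argue purely group-theoretically: show that \ACF-independence of the $\mathcal P(n)$-system of fields implies that the corresponding system of additive groups $(\G_a(F_a))$ is an independent system of $\Q$-vector spaces, meaning $\G_a(F_b) \cap \sum_{a \not\subseteq d \subseteq b}\G_a(F_d) = \sum_{c \subsetneq b}\G_a(F_c)$ inside each $\G_a(F_b)$ — this is a direct consequence of \ACF-independence since $\Q$-linear dependence is a transcendence-degree statement — and then a routine diagram chase on such a system of vector spaces gives well-definedness of $\bigcup_a E_a$ on $G$ directly. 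Either way, once $E$ is defined on $K$, it restricts to $E_a$ on each $F_a$ by construction, so $(K, E)$ with the given face-fields is the desired independent $\mathcal P(n)$-system of EA-fields, after replacing $K$ by $\EAgen{K}$ if one insists the top vertex be exactly generated — though $K$ is already an EA-field, so nothing more is needed. Finally, independence of the completed system at the top is exactly the \ACF-independence arranged in stage one, since $\EAgen{AC}$-independence for EA-fields is by Definition~\ref{independence-definition} just \ACF-independence of the generated EA-subfields, and here those are the $F_a$ themselves.
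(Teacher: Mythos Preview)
Your main approach is correct and matches the paper's: build $F_n$ as an algebraically closed field via Fact~\ref{acf-n-amalgamation}, check that $\bigcup_a E_a$ is well-defined on the additive span of the faces using Fact~\ref{generalised-heir}, then extend by divisibility of $\G_m(F_n)$. The paper organises the induction on the number $k$ of corank-one terms in the relation $a_{\comp 0} + \cdots + a_{\comp k} = 0$ rather than on $n$, but the mechanism --- apply Fact~\ref{generalised-heir} with $t = \comp{k+1}$ to replace $a_{\comp 0},\dots,a_{\comp k}$ by elements $b_{\comp{j,k+1}} \in F_{\comp{j,k+1}} \subseteq F_{\comp{k+1}}$, compute $\prod E(b_{\comp{j,k+1}})\cdot E(a_{\comp{k+1}}) = 1$ inside the single EA-field $F_{\comp{k+1}}$, and handle the differences $a_{\comp j} - b_{\comp{j,k+1}}$ by the inductive hypothesis --- is exactly your ``peel off one coordinate'' step.

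One caution on your proposed alternative route: the justification ``$\Q$-linear dependence is a transcendence-degree statement'' is not correct (elements can be algebraically dependent yet $\Q$-linearly independent, and vice versa), so the vector-space independence of the additive system is not a \emph{direct} consequence of \ACF-independence in the way you suggest. To prove it you would still need Fact~\ref{generalised-heir} applied to the formula expressing the linear relation, which is precisely the main argument; so the alternative is not really a shortcut.
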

\begin{proof}
Let $F$ be an independent $\mathcal P^-(n)$-system of EA-fields. For $a \subsetneq n$, let
$E_a$ denote the exponentiation on $F_a$. In this proof it is convenient to use
a notation for complements. So we denote $\comp {i} = n \minus \{i\}$ and
$\comp {i,j} = n \minus \{i, j\}$.

By Fact \ref{acf-n-amalgamation} there is an algebraically closed field $F_n$
and embeddings (which we think of as inclusions) completing $F$ to an
independent $\mathcal P(n)$-system of algebraically closed fields. We need to
extend $E_{\comp 0} \cup \dots \cup E_{\comp {n-1}}$ to a homomorphism
from $\G_a(F_n)$ to $\G_m(F_n)$. There is a unique way to extend it to the group
$F_{\comp 0} + \dots + F_{\comp {n-1}}$ generated by their domains. We
show that this is a well-defined homomorphism. Then by Fact~\ref{divisibility
fact} it can be extended to $\G_a(F_n)$.  To show that this map is well-defined
reduces to showing that if $a_{\comp 0} \in F_{\comp 0}, \dots, a_{\comp {n-1}}
\in F_{\comp {n-1}}$ and $a_{\comp 0} + \dots + a_{\comp {n-1}} = 0$, then it is
the case that $E_{\comp 0}(a_{\comp 0}) \cdots E_{\comp {n-1}}(a_{\comp {n-1}})
= 1$.

We prove by induction on $k$ that if $a_{\comp 0} \in F_{\comp 0}, \dots, a_{\comp
k} \in F_{\comp k}$ and $a_{\comp 0} + \dots + a_{\comp k} = 0$, then $E_{\comp
0}(a_{\comp 0}) \cdots E_{\comp k}(a_{\comp k}) = 1$. For $k = 0$ this is clear.
So assume it holds for $k$. 
Let $a_{\comp 0} \in F_{\comp 0}, \dots, a_{\comp k}
\in F_{\comp k}, a_{\comp {k+1}} \in F_{\comp {k+1}}$ be such that 
$$a_{\comp 0} + \dots + a_{\comp k} + a_{\comp {k+1}} = 0.$$
By Fact \ref{generalised-heir} there are $b_{\comp{0,k+1}} \in F_{\comp
{0,k+1}}, \dots, b_{\comp{k,k+1}} \in F_{\comp{k,k+1}}$ such that
$$b_{\comp{0,k+1}} + \dots + b_{\comp{k,k+1}} + a_{\comp{k+1}} = 0.$$
But since $b_{\comp{0,k+1}}, \dots, b_{\comp{k,k+1}} \in F_{\comp{k+1}}$ we have
\begin{eqnarray*}
\prod_{j=0}^k E_{\comp{j,k+1}}(b_{\comp{j,k+1}}) \cdot E_{\comp{k+1}}(a_{\comp{k+1}}) 
&=& \prod_{j=0}^k E_{\comp{k+1}}(b_{\comp{j,k+1}}) \cdot E_{\comp{k+1}}(a_{\comp{k+1}}) \\
&=&  E_{\comp{k+1}}\left(\sum_{j=0}^k b_{\comp{j,k+1}} + a_{\comp{k+1}}\right) \\ 
&=& E_{\comp{k+1}}(0) = 1.
\end{eqnarray*}

But also
$$a_{\comp 0} - b_{\comp{0,k+1}} + \dots + a_{\comp k} - b_{\comp{k,k+1}} = a_{\comp{k+1}} - a_{\comp{k+1}} = 0.$$
Therefore by the induction hypothesis we conclude that
$$E_{\comp 0}(a_{\comp 0}) \cdots E_{\comp k}(a_{\comp k}) =
E_{\comp{0,k+1}}(b_{\comp{0,k+1}}) \cdots E_{\comp{k,k+1}}(b_{\comp{k,k+1}}).$$
Putting these two together we conclude that
$$E_{\comp 0}(a_{\comp 0}) \cdots E_{\comp k}(a_{\comp k})
E_{\comp{k+1}}(a_{\comp{k+1}}) = 1.$$
This completes the induction and taking $k = n-1$ gives us precisely the
statement that we need.
\end{proof}

\begin{remark}
The conclusion of this theorem is sometimes called \emph{$n$-existence}. Note that the exponential map on $F_n$ is far from uniquely determined, and so the property called \emph{$n$-uniqueness} does not hold. 
This is in contrast to the situation for the category of exponentially closed fields (with the Schanuel property, standard kernel, and the countable closure property) as studied in the papers by \cite{zilber-pseudo-exp} and \cite{BaysKirby18}. In that case both $n$-existence and $n$-uniqueness hold for all $n \in \N$, and indeed this is the core of the proof of the existence and uniqueness of models of each uncountable cardinality.
\end{remark}

\section {Model theoretic tree properties}
\label{tree-properties-section}

In this section we introduce and study two properties of formulas: $\tptwo$ and
$\sopone$. Both properties have been extensively studied in the literature for
complete first-order theories. Our setting of existentially closed models of inductive theories is somewhat more general,
however the results generally transfer, with small changes to the definitions and proofs. 

\subsection{TP${}_2$}

\begin{definition}
Let $T$ be an inductive theory with the JEP.
An existential formula $\phi(\bar x, \bar y)$ has the {\em tree property of the
second kind} ($\tptwo$ for short) with respect to $\ECcat(T)$  if there is an amalgamation base $A \models T$, an existential
formula $\psi(\bar y_1, \bar y_2)$ and parameters $(\bar a_{i,j})_{i,j <
\omega}$ from $A$ such that the following hold:
\begin{enumerate}[(i)]
\item for all $\sigma \in \omega^\omega$ the set $\{\phi(\bar x, \bar
a_{i,\sigma(i)}) : i < \omega\}$ is consistent, that is, it is realised in some $B \models T$ such that $A \subs B$ (or equivalently it is realised in a monster model).

\item $\psi(\bar y_1, \bar y_2)$ implies that $\phi(\bar x, \bar y_1) \land
\phi(\bar x, \bar y_2)$ is inconsistent, that is,
$$T \entails \lnot \exists \xbar \ybar_1 \ybar_2 [\psi(\ybar_1,\ybar_2) \land
\phi(\xbar, \ybar_1) \land \phi(\xbar, \ybar_2)].$$

\item for every $i, j, k < \omega$, if $j \neq k$ then $A \models \psi(\bar
a_{i,j}, \bar a_{i,k})$.
\end{enumerate}
\end{definition}

In the first-order setting the property $\tptwo$ was introduced by
\cite{shelah-simple} and extensively studied by \cite{chernikov-kaplan-ntp2},
\cite{ben-yaacov-chernikov-ntp2} and \cite{chernikov-ntp2}.

\begin{remark}
In the setting of a complete first-order theory, we may always take $\psi(\bar
y_1, \bar y_2)$ to be the formula $\lnot \exists x [\phi(\bar x, \bar y_1) \land
\phi(\bar x, \bar y_2)]$. So the formula $\psi$ is not mentioned in the
definition, which instead simply insists that $\phi(\bar x, \bar a_{i,j}) \land
\phi(\bar x, \bar a_{i,k})$ is inconsistent.

In our setting $\phi(\bar x, \bar a_{i,j}) \land \phi(\bar x, \bar a_{i,k})$
being inconsistent means that there is an
existential formula $\psi(\bar y_1, \bar y_2)$ satisfied by $\bar a_{i,j}, \bar
a_{i,k}$ that implies that $\phi(\bar x, \bar y_1) \land \phi(\bar x, \bar y_2)$
is inconsistent. However this formula $\psi$ may be different for different
triples $(i,j,k)$. So the point of asking for $\psi$ explicitly is to have a
single one that works for all $i,j,k$. An alternative approach would be just to
stipulate that $\phi(\bar x, \bar a_{i,j}) \land \phi(\bar x, \bar a_{i,k})$ is
inconsistent, but to require the existence of parameters $(\bar a_{i,j})_{i,j < \kappa}$ for sufficiently large
$\kappa$.  Then we could use the Erd\"os-Rado theorem to find a sub-tree of the
parameters for which a single formula $\psi$ suffices.
\end{remark}

\begin{proposition}
\label{have-tptwo}
Let $T$ be a \JEPrefinement\ of $\Tefield$. Then the formula $\phi(x, yz) := E(y \cdot x) = z$ has $\tptwo$ with respect to $\ECcat(T)$.
\end{proposition}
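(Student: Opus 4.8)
The plan is to take the array entries to be pairs $\bar a_{i,j} = (b_i, c_{i,j})$ whose first coordinate depends only on the row $i$. With an array of this shape the separating formula can be taken to be the existential formula
\[ \psi(y_1 z_1,\, y_2 z_2) \;:=\; \exists w \bigl[\, y_1 = y_2 \;\wedge\; (z_1 - z_2) w = 1 \,\bigr], \]
which expresses that $y_1 = y_2$ and $z_1 \neq z_2$. Then conditions (ii) and (iii) are essentially immediate. For (ii): if $\psi(y_1 z_1, y_2 z_2)$ holds then $\phi(x, y_1 z_1) \wedge \phi(x, y_2 z_2)$, which is $E(y_1 x) = z_1 \wedge E(y_2 x) = z_2$, forces $z_1 = z_2$ and so is inconsistent; thus $\Tefield$, hence $T$, entails the required negation. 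For (iii) it suffices to choose the $c_{i,j}$ so that, for each fixed $i$, they are pairwise distinct.

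For the base I would take $A$ to be a monster model $\M$ of $T$: it is an existentially closed $E$-field, hence an amalgamation base by Theorem~\ref{amalg base is EA-field}, and $\M \models T$, so it is admissible. Since $\M$ is an infinite field extension of $\mathbb Q$ it contains $\mathbb Q$-linearly independent elements $b_i$, $i < \omega$; also pick $c_{i,j} \in \M^\times$, $i,j < \omega$, with $c_{i,j} \neq c_{i,k}$ for $j \neq k$. Set $\bar a_{i,j} = (b_i, c_{i,j})$.

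It remains to verify the row-consistency condition (i): for each $\sigma \in \omega^\omega$ the partial type $p_\sigma(x) = \{\, E(b_i x) = c_{i,\sigma(i)} : i < \omega \,\}$ is realised in $\M$. I would first realise it in an $E$-field extension of $\M$ and then transfer the realisation down by saturation. Let $x$ be transcendental over $\M$ and let $K$ be the algebraic closure of $\M(x)$. Because $x$ is transcendental over $\M$ and the $b_i$ are $\mathbb Q$-linearly independent, the $b_i x$ are $\mathbb Q$-linearly independent over $\M$, so the assignment $b_i x \mapsto c_{i,\sigma(i)}$ extends the exponential of $\M$ to a well-defined homomorphism from the subgroup of $\G_a(K)$ generated by $\G_a(\M)$ and the $b_i x$ to $\G_m(K)$; extending this to all of $\G_a(K)$ by divisibility of $\G_m(K)$ (Fact~\ref{divisibility fact}) produces an $E$-field $K \supseteq \M$ in which $x$ realises $p_\sigma$. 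As $p_\sigma$ uses only countably many parameters and is realised in an extension of $\M$, it is realised in $\M$. (Alternatively, $K$ embeds into an existentially closed $E$-field which, being jointly embeddable with $\M \models T$, is again a model of $T$, giving some $B \models T$ with $\M \subseteq B$ realising $p_\sigma$ directly.) This establishes (i), and completes the verification.

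I do not expect a serious obstacle here: the genuinely creative step is the shape of the array — making the first coordinate of $\bar a_{i,j}$ constant along each row — which is exactly what allows a single existential $\psi$ to witness the $2$-inconsistency of every row. After that, the only real work is the extension of the exponential map in (i), where one must check the $\mathbb Q$-linear independence that makes it well defined; everything else is formal.
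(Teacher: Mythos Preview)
Your proof is correct and follows essentially the same plan as the paper: the same array shape $\bar a_{i,j} = (b_i, c_{i,j})$ with first coordinate constant along rows, and the same witness $\psi$ expressing $y_1 = y_2 \wedge z_1 \neq z_2$. The only substantive difference is in how you verify condition~(i). The paper packages the consistency of $\{E(b_i x) = c_{\sigma(i)}\}$ as the statement that a certain variety $W \subseteq \ga^{n+1} \times \gm^{n+1}$ (with coordinates $x_0,\dots,x_n$ satisfying $x_i = b_i x_0$) is additively free, and then invokes Theorem~\ref{acfe-axioms}; this is why the paper needs $1, b_1, b_2, \dots$ to be $\Q$-linearly independent, not merely the $b_i$. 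You instead build the extension by hand, choosing a transcendental $x$ and extending $E$ via the $\Q$-linear independence of the $b_i x$ over $\M$ --- which is exactly the construction hidden inside the proof of Theorem~\ref{acfe-axioms}. Your route is slightly more self-contained and needs only the $b_i$ independent; the paper's route illustrates how the additive-freeness criterion is meant to be used in practice. Neither buys anything the other doesn't.
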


\begin{proof}
Take the formula $\psi(y_1z_1, y_2z_2)$ to be $y_1 = y_2 \land z_1 \neq z_2$. 
It is convenient to choose parameters $(\abar_{i,j})_{1 \le i,j < \omega}$, with indices starting at 1 rather than 0.

Let $F \models T$ be an amalgamation base. So it is algebraically closed, and in particular the $\Q$-linear dimension of $F$ is infinite.
Choose $b_i \in F$ for $1 \le i < \omega$ such that $1, b_1, b_2, \dots$ are $\Q$-linearly independent. For $1\le j < \omega$ choose $c_j \in F$ to be distinct and nonzero. We
let $\bar a_{i,j} = b_ic_j$. It is clear that if $j \neq k$, then $F \models
\psi(b_ic_j, b_ic_k)$. So (iii) holds. Also  (ii) holds because the exponential map is a function.

Let $\sigma \in (\omega\minus\{0\})^{\omega\minus\{0\}}$. By compactness, it remains to show that for any $n$ the
formula $\bigwedge_{i=1}^n E(b_i \cdot x) = c_{\sigma(i)}$ is consistent. But this
follows from Theorem \ref{acfe-axioms}. Indeed, consider the variety $W(x_0, \dots,
x_n, y_0, \dots, y_n)$ defined by the equations
\begin{align*}
x_1 &= b_1 \cdot x_0, \\
    & \vdots \\
x_n & = b_n \cdot x_0, \\
y_1 & = c_{\sigma(1)}, \\
    & \vdots \\
y_n & = c_{\sigma(n)}.
\end{align*}
We claim that $W$ is additively free. Indeed, assume that for some $m_i \in
\mathbb Z$ and $d \in F$ we have $\sum_{i=0}^n m_i x_i - d \in \I(W)$. Then
since $(0, \dots, 0, c_{\sigma(1)},\dots, c_{\sigma(n)}) \in W$ we conclude that $d = 0$. Also
$(1, b_1, \dots, b_n, 1, c_{\sigma(1)},\dots, c_{\sigma(n)}) \in W$ and hence we conclude that
$m_0 + \sum_{i=1}^n m_ib_i = 0$. By the choice of the $b_i$ we must have $m_0 = \dots
= m_n = 0$. This shows that $W$ is additively free. Thus it must have an exponential point in some E-field extension of $F$. The first coordinate of this point realises
$\bigwedge_{i = 1}^n E(b_i \cdot x) = c_{\sigma(i)}$, as required.
\end{proof}

As for a complete first-order theory, the property $\tptwo$ for $\Emb(T)$
implies that dividing (appropriately defined) does not have local character.
This means that all \JEPrefinement s of $\Tefield$ are not simple either in the
sense of \cite{pillay-ec-forking} or in the weaker sense of
\cite{ben-yaacov-simplicity}. We give the details in the appendix.

\subsection{NSOP${}_1$}

\begin{definition}
Let $T$ be an inductive theory with the JEP. 
An existential formula $\phi(\bar x, \bar y)$ has the {\em $1$-strong order
property} ($\sopone$ for short) with respect to $\ECcat(T)$ if there is an amalgamation base $A \models T$, an existential formula $\psi(\bar y_1, \bar y_2)$, and a binary tree of parameters $(\bar a_\eta : \eta
\in 2^{<\omega})$ from $A$ such that the following hold:
\begin{enumerate}[(i)]
\item For every branch $\sigma \in 2^\omega$, the set $\{\phi(\bar x, \bar
a_{\sigma|_n} ): n < \omega\}$ is consistent, that is, realised in some extension $A \subs B$ such that $B \models T$.

\item $\psi(\bar y_1, \bar y_2)$ implies that $\phi(\bar x, \bar y_1) \land
\phi(\bar x, \bar y_2)$ is inconsistent, that is,
$$T \entails \lnot \exists \xbar \ybar_1 \ybar_2 [\psi(\ybar_1,\ybar_2) \land
\phi(\xbar, \ybar_1) \land \phi(\xbar, \ybar_2)].$$

\item For every $\eta, \nu \in 2^{<\omega}$, if $\eta^\frown0\preceq \nu$, then
$A \models \psi(\bar a_{\eta^\frown1}, \bar a_\nu)$.
\end{enumerate}

If no existential formula has $\sopone$, we say that $\ECcat(T)$ is $\nsopone$.
\end{definition}

Here the notation $\eta^\frown i$ denotes the sequence $\eta$ with extra element
$i$ at the end and $\eta \preceq \nu$ means that $\eta$ is an initial segment of
$\nu$. As with $\tptwo$, there is no need for $\psi$ in the full first-order
setting where it can always be taken to be $\lnot \exists \bar x [\phi(\bar x,
\bar y_1) \land \phi(\bar x, \bar y_2)]$.

The property $\sopone$ was introduced by \cite{dzamonja-shelah-maximality}
in the full first-order setting. Its systematic study began in
\cite{chernikov-ramsey-tree-properties} and is presently a very active area. We
will use the following version of a theorem proved in \cite{chernikov-ramsey-tree-properties}
for complete first-order thories. 

\begin{theorem}
\label{chernikov-ramsey-theorem}
Let $T$ be an inductive theory with the JEP and let
$\mathfrak M$ be a monster model for $T$. Assume that there is an $\Aut(\mathfrak
M)$ invariant independence relation $\nforks$ on small subsets of $\mathfrak M$
which satisfies the following properties for any small existentially closed model $M$ and any small
tuples $\abar$, $\bbar$ from $\mathfrak M$:
\begin{enumerate}[(i)]
\item {\em Strong finite character:} if $\bar a \forks_M \bar b$, then there is
an existential formula $\phi(\bar x, \bar b, \bar m) \in \etp(\bar a/\bar bM)$
such that for any $\bar a'$ realising $\phi$, the relation $\bar a' \forks_M
\bar b$ holds;
\item {\em Existence over models:} $\bar a \nforks_M M$ for any tuple $\bar a
\in \mathfrak M$;
\item {\em Monotonicity:} $\bar a \bar a' \nforks_M \bar b \bar b'$ implies
$\bar a \nforks_M \bar b$;
\item {\em Symmetry:} $\bar a \nforks_M \bar b$ implies $\bar b \nforks_M \bar
a$;
\item {\em Independent 3-amalgamation:} 
If $\bar c_1 \nforks_M \bar c_2$, $\bar b_1
\nforks_M \bar c_1$, $\bar b_2 \nforks_M \bar c_2$ and $\bar b_1 \equiv_M \bar
b_2$ then there exists $\bar b$ with $\bar b \equiv_{\bar c_1 M} \bar
b_1$ and $\bar b \equiv_{\bar c_2 M} \bar b_2$.
\end{enumerate}
Then $\ECcat(T)$ is $\nsopone$.
\end{theorem}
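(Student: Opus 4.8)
The plan is to adapt to our setting the proof of the corresponding criterion for complete first-order theories in \cite{chernikov-ramsey-tree-properties}; the appendix carries out this adaptation, and I sketch the strategy here. The argument is by contradiction. Suppose some existential formula $\phi(\xbar,\ybar)$ has $\sopone$ with respect to $\ECcat(T)$, witnessed by an amalgamation base $A$, an existential formula $\psi(\ybar_1,\ybar_2)$ and a binary tree $(\bar a_\eta : \eta\in 2^{<\omega})$ of parameters from $A$. Since $\mathfrak M$ is an existentially closed model of $T$ and hence an amalgamation base, we may take $A=\mathfrak M$, so the witnessing data lie in $\mathfrak M$.

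First I would replace this witness by a well-behaved one. Fix a small existentially closed model $M\subseteq\mathfrak M$. By compactness the tree stretches to one indexed by an arbitrarily large tree with the same configuration: consistency of a finite initial segment of a branch of $\phi$-instances is an existential statement true of the corresponding finite tuple of parameters and realised in $\mathfrak M$ by saturation; each link $\psi(\bar a_{\eta^\frown1},\bar a_\nu)$ is existential; and the inconsistency in clause~(ii) is a consequence of $T$. Using the tree version of the modeling property in the formulation suitable for $\ECcat(T)$ --- with $\exists$-types over $M$ in place of complete types, which is the technical content to be developed in the appendix --- one extracts a strongly $M$-indiscernible tree locally based on it; since all the relevant existential formulas were put into the $\exists$-Ehrenfeucht--Mostowski type, this tree, which we continue to denote $(\bar a_\eta)$, still witnesses $\sopone$. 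Finally, using the remaining properties of $\nforks$ (existence over models, monotonicity, symmetry and strong finite character) together with a further application of the modeling property, one upgrades it to a \emph{Morley tree} over $M$: a strongly $M$-indiscernible, spread-out tree in which, for every node, the subtrees hanging below its distinct immediate successors are mutually $\nforks$-independent over $M$ and every branch is an $\nforks$-Morley sequence over $M$.

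With such a tree in hand, the contradiction comes from independent $3$-amalgamation. Let $\gamma_0=\bar a_{\langle 0\rangle}$ and $\gamma_1=\bar a_{\langle 1\rangle}$, and let $\beta_0,\beta_1$ be the branches extending them (regarded as tuples enumerating their nodes). Clause~(iii) with $\eta=\emptyset$ and $\nu=\langle 0\rangle$ gives $\mathfrak M\models\psi(\gamma_1,\gamma_0)$, so by clause~(ii) no tuple satisfies $\phi(\xbar,\gamma_1)\land\phi(\xbar,\gamma_0)$. On the other hand the branch type along $\beta_0$ is consistent, so by the key lemma of \cite{chernikov-ramsey-tree-properties} for $\nforks$-Morley sequences --- whose proof uses strong finite character --- it has a realisation $\bar c$ with $\bar c\nforks_M\beta_0$; in particular $\models\phi(\bar c,\gamma_0)$. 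Since $\beta_0$ and $\beta_1$ realise the same $\exists$-type over $M$ by strong indiscernibility, homogeneity of $\mathfrak M$ gives $\sigma\in\Aut(\mathfrak M/M)$ with $\sigma(\beta_0)=\beta_1$; setting $\bar c'=\sigma(\bar c)$ we get $\bar c'\equiv_M\bar c$, $\bar c'$ realises the branch type along $\beta_1$, and $\bar c'\nforks_M\beta_1$ by $\Aut(\mathfrak M)$-invariance, whence $\models\phi(\bar c',\gamma_1)$. Also $\beta_1\nforks_M\beta_0$ by monotonicity, since the two subtrees below the root are $\nforks$-independent over $M$ in a Morley tree. Now apply independent $3$-amalgamation over $M$ with $\bar c_1=\beta_1$, $\bar c_2=\beta_0$, $\bar b_1=\bar c'$ and $\bar b_2=\bar c$: it produces $\bar b$ with $\bar b\equiv_{\beta_1 M}\bar c'$ and $\bar b\equiv_{\beta_0 M}\bar c$, and since $\phi$ is existential with $\gamma_1\in\beta_1$ and $\gamma_0\in\beta_0$, this forces $\models\phi(\bar b,\gamma_1)$ and $\models\phi(\bar b,\gamma_0)$ --- a contradiction.

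I expect the main obstacle to be the first step: setting up, in the positive / existentially-closed-models framework, the theory of strongly indiscernible (and Morley) trees and the tree modeling property with $\exists$-types and immersions in place of complete types and elementary maps, and checking that passing from the original $\sopone$-witness through an indiscernible tree to a Morley tree preserves the entire $\sopone$-configuration, the auxiliary formula $\psi$ included. Once that machinery is available, the combinatorial core above should be a faithful transcription of the argument in \cite{chernikov-ramsey-tree-properties}, reading ``type over $M$'' as ``$\exists$-type over $M$'', ``$\equiv_M$'' as equality of $\exists$-types over $M$, and ``inconsistent'' as the $T$-provable inconsistency witnessed by $\psi$.
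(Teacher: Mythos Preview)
Your route differs from the paper's, and as written it has a gap. The paper does not develop tree indiscernibility or Morley trees at all. Following Chernikov--Ramsey's Proposition~5.3 verbatim, the only adaptation needed is Proposition~\ref{chernikov-ramsey-5.2} in the appendix: from an $\sopone$-witness one extracts, using only \emph{sequence} indiscernibility (via Erd\H{o}s--Rado, Fact~\ref{erdos-rado-indiscernible}) and a Skolemisation lemma, an existentially closed $M$ and tuples $c_1,c_2,b_1,b_2$ with $c_1\nforks^u_M c_2$, $c_i\nforks^u_M b_i$, $b_1\equiv_M b_2$, $\mathfrak M\models\phi(b_i,c_i)\wedge\psi(c_1,c_2)$, where $\nforks^u$ is \emph{coheir} independence (finite satisfiability in $M$). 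One then checks that $a\nforks^u_M b$ implies $a\nforks_M b$ --- this is where strong finite character, existence over models, monotonicity and symmetry are used --- so (after symmetry) the hypotheses of independent $3$-amalgamation for $\nforks$ are met, producing a single $b$ with $\phi(b,c_1)\wedge\phi(b,c_2)$, contradicting $\psi(c_1,c_2)$.

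The problematic step in your sketch is ``upgrade to a Morley tree over $M$'' for the abstract $\nforks$. The axioms assumed for $\nforks$ do not include extension, transitivity, or base monotonicity (the paper explicitly remarks on this just after the theorem), and without extension there is no evident way to arrange that the subtrees below distinct successors are mutually $\nforks$-independent or that branches are $\nforks$-Morley sequences. The paper avoids this entirely by doing all the combinatorial extraction with $\nforks^u$, which has the needed closure properties automatically in any theory, and only invoking $\nforks$ at the final $3$-amalgamation step via the implication $\nforks^u\Rightarrow\nforks$. As a bonus this also means no tree-modeling lemma in the positive setting is required; the appendix gets by with Erd\H{o}s--Rado for sequences and a Skolemisation compatible with existential closedness (Lemma~\ref{skolem-lemma}).
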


\begin{proof}
The proof is identical to the proof of
\cite[Proposition~5.3]{chernikov-ramsey-tree-properties}, except the use of
Proposition 5.2 in that paper is replaced by Proposition
\ref{chernikov-ramsey-5.2} from the Appendix.
\end{proof}

\begin{remark}
The reader familiar with independence relations for simple theories may observe that the properties given here do not imply either base monotonicity or local character.
\end{remark}

\begin{theorem}
\label{no-sopone}
The independence $\nforks$ for exponential fields defined in Definition \ref{independence-definition} satisfies the above five conditions, even over any $\EA$-field, not just over existentially closed E-fields. Consequently, if $T$ is any \JEPrefinement\ of \Tefield\ then $\ECcat(T)$ is $\nsopone$.
\end{theorem}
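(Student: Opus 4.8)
The plan is to verify the five conditions of Theorem~\ref{chernikov-ramsey-theorem} directly from Definition~\ref{independence-definition}, working inside a monster model $\mathfrak M$ (or, more generally, over an arbitrary $\EA$-field $F$ in place of a small model $M$). Throughout I will freely use that for an existentially closed $M$ — in fact for any $\EA$-field $F$ — we have $\eacl(A) = \EAgen{A}_{\mathfrak M}$ by Corollary~\ref{eacl characterization} (or just $\EAgen{\cdot}$ when the base is already algebraically closed and exponential), so that $A \nforks_F B$ is literally $\ACF$-independence of $\EAgen{AF}$ and $\EAgen{BF}$ over $\EAgen{F} = F$. The first four conditions are then almost immediate from the corresponding well-known properties of $\ACF$-independence (Fact~\ref{ACF amalg} and the remarks around Definition~\ref{ACF indep defn}): \emph{existence over models} is the statement $\td(\bar a/F) = \td(\bar a/F)$; \emph{monotonicity} follows because $\EAgen{\bar a F} \subseteq \EAgen{\bar a \bar a' F}$ and similarly on the other side, so $\ACF$-independence of the larger pair gives it for the smaller; \emph{symmetry} is symmetry of $\ACF$-nonforking; and for \emph{strong finite character}, if $\bar a \forks_F \bar b$ then already at the level of fields there is a finite tuple from $\EAgen{\bar a F}$ witnessing a transcendence-degree drop over $\EAgen{\bar b F}$, and one packages this — together with finitely many instances of the exponential axioms describing how that tuple sits in $\EAgen{\bar a F}$ — into a single existential formula $\phi(\bar x, \bar b, \bar m)$ over $\bar b F$ whose satisfaction forces $\bar a' \forks_F \bar b$; here one uses that $\eacl$ is witnessed by existential formulas (Fact~\ref{ec models fact}(iv) and the discussion following the definition of $\eacl$).

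The substantive point is condition~(v), \emph{independent $3$-amalgamation}. Given $\bar c_1 \nforks_F \bar c_2$, $\bar b_1 \nforks_F \bar c_1$, $\bar b_2 \nforks_F \bar c_2$, and $\bar b_1 \equiv_F \bar b_2$, I want $\bar b$ with $\bar b \equiv_{\bar c_1 F} \bar b_1$ and $\bar b \equiv_{\bar c_2 F} \bar b_2$. Pass to the $\EA$-subfields $B_i = \EAgen{\bar b_i F}$ and $C_i = \EAgen{\bar c_i F}$; the hypothesis $\bar b_1 \equiv_F \bar b_2$ gives (using that amalgamation bases determine existential types, via the Proposition in Section~2 on $\etp$, since $\EA$-fields are amalgamation bases by Theorem~\ref{amalg base is EA-field}) an $F$-isomorphism $B_1 \cong B_2$ matching $\bar b_1 \leftrightarrow \bar b_2$. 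This configuration is exactly an independent $\mathcal P^-(3)$-system of $\EA$-fields over $F$: index $F$ by $\emptyset$, $B_i$ by $\{i\}$ for $i \in \{0,1\}$ suitably, $C$-data by $\{0,1\}$-type nodes — more precisely, set up the system so that the three proper subsets of $3$ carry $F$, $\EAgen{\bar b_1 \bar c_1 F}$ and (an isomorphic copy of) $\EAgen{\bar b_2 \bar c_2 F}$, glued along the common $F$, and check the independence condition from the three hypotheses. Then Theorem~\ref{n-amalgamation} ($n=3$) completes this to an independent $\mathcal P(3)$-system of $\EA$-fields, producing an $\EA$-field $F_3$ with embeddings of all the data; the image of $\bar b_1$ (equivalently of $\bar b_2$) in $F_3$ is the desired $\bar b$. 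That $\bar b \equiv_{\bar c_1 F} \bar b_1$ and $\bar b \equiv_{\bar c_2 F} \bar b_2$ follows because the relevant embeddings restrict to $\bar c_i F$-isomorphisms onto $\EAgen{\bar b_i \bar c_i F}$ and, these being amalgamation bases inside the monster, the existential types agree (Proposition in Section~2 again). Finally one uses homogeneity of $\mathfrak M$ to realise $\bar b$ inside $\mathfrak M$.

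The main obstacle I anticipate is the bookkeeping in step~(v): one must check that the three independence hypotheses, which a priori are pairwise statements about $\ACF$-independence of various $\EA$-closures over $F$, really do assemble into the single independence condition $F_a \nforks_{\bigcup_{c \subsetneq a} F_c} \bigcup_{a \not\subseteq d \subseteq b} F_d$ required of a $\mathcal P^-(3)$-system in Section~5, and that the isomorphism $B_1 \cong B_2$ is compatible with the $C_i$ data in the way the functor demands. This is where one has to be careful that the weak independence notion of Definition~\ref{independence-definition} — which, as the authors note, ignores logarithms — is nonetheless exactly the $\ACF$-independence of $\EA$-closures, so that Facts~\ref{acf-n-amalgamation} and~\ref{generalised-heir}, and hence Theorem~\ref{n-amalgamation}, apply verbatim. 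Once the system is correctly set up, the existence half (there is no uniqueness claim, as the Remark after Theorem~\ref{n-amalgamation} stresses) is handed to us by $3$-amalgamation. The closing sentence — that every \JEPrefinement\ of $\Tefield$ is $\nsopone$ — is then immediate from Theorem~\ref{chernikov-ramsey-theorem}, since all the verifications were carried out over an arbitrary $\EA$-field and in particular over any monster model of any such \JEPrefinement.
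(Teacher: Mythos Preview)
Your overall strategy coincides with the paper's: verify the five conditions and invoke Theorem~\ref{chernikov-ramsey-theorem}. The paper distributes the effort oppositely, however: it dismisses (ii)--(iv) and independent 3-amalgamation in a line each (the latter as a ``well-known and straightforward'' reformulation of Theorem~\ref{n-amalgamation}) and spends all its detail on strong finite character. Your sketch of strong finite character is the paper's argument in outline.

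The gap is in your $\mathcal P^-(3)$-system. There are seven proper subsets of $3$, not three, and your two descriptions of the system contradict each other; neither mentions the node $\EAgen{\bar c_1 \bar c_2 F}$. The correct system has $F$ at $\emptyset$; the EA-fields $\EAgen{\bar c_1 F}$, $\EAgen{\bar c_2 F}$, and a single abstract $B \cong \EAgen{\bar b_1 F} \cong \EAgen{\bar b_2 F}$ at the three singletons; and $\EAgen{\bar c_1 \bar c_2 F}$, $\EAgen{\bar b_1 \bar c_1 F}$, $\EAgen{\bar b_2 \bar c_2 F}$ at the three two-element subsets. The node $\EAgen{\bar c_1 \bar c_2 F}$ is not optional: after Theorem~\ref{n-amalgamation} produces $F_3$, you must embed $F_3$ into the monster \emph{fixing both $\bar c_1$ and $\bar c_2$}, and the amalgamation base over which you do this is precisely $\EAgen{\bar c_1 \bar c_2 F}$. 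Without it you can embed only over $F$, lose control of the $\bar c_i$, and cannot conclude $\bar b \equiv_{\bar c_i F} \bar b_i$. Once the system is set up this way, the three hypotheses are exactly the nontrivial independence conditions required (for a $\mathcal P^-(3)$-system these occur only for singleton $a$ inside two-element $b$), so your worry about ``bookkeeping'' dissolves and the rest of your argument goes through.
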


\begin{proof}
Let $T$ be a \JEPrefinement\ of $\Tefield$, and let $\mathfrak F$ be a monster model for $T$.

Existence over models, monotonicity, and symmetry follow from the same properties for $\ACF$-independence. 
Independent 3-amalgamation follows from Theorem~\ref{n-amalgamation}. Although the two statements of independent 3-amalgamation look different, it is well-known and straightforward to prove that they are equivalent.

We prove strong finite character.

Let $F \subseteq \mathfrak F$ be a small EA-subfield. 
Assume that $\bar a \forks_F \bar b$. Then $\EAgen{F\bar a}$ and $\EAgen{F\bar
b}$ are not \ACF-independent over $F$. So there is a finite tuple
 $\bar \beta \in \EAgen{F\bar b}$ which is algebraically independent over $F$, but not algebraically independent over $\EAgen{F\bar a}$. Let $q(\ybar)$ be a polynomial witnessing the algebraic dependence. By dividing through by some coefficient, we may assume that the coefficient of some non-constant term is 1.
 
 Now write $q(\ybar)$ as $p(\bar \alpha,\ybar)$ where $\bar \alpha$ is a finite tuple from $\EAgen{F\bar a}$ and $p(\bar x, \bar y) \in F[\xbar,\ybar]$.

Let $\psi(\bar x, \bar a) \in \etp^{\mathfrak F}(\bar \alpha/F\bar a)$ be an $\exists$-formula which exhibits the witnesses which show that $\bar\alpha$ is in the EA-closure of $F\abar$. We can assume that for any $\abar'$ and $\bar\alpha'$, if $\mathfrak F \models \psi(\bar\alpha',\abar')$ then $\bar\alpha' \in \EAgen{F\abar'}$.

By Corollary~\ref{eacl characterization}, $\bar \beta$ is in the model-theoretic
algebraic closure of $F \bar b$.  Take  $\chi(\bar y, \bar b) \in
\etp^{\mathfrak F}(\bar \beta/F\bar b)$ to be the $\exists$-formula defining the
smallest finite set containing $\bar\beta$ and defined over $F\bar b$. Then
since $\bar \beta$ is (field-theoretically) algebraically independent over $F$, if $\mathfrak F
\models \chi(\bar\beta',\bar b)$ then $\bar \beta'$ is also algebraically
independent over $F$.


We claim that if $\bar a'$ realises the $\exists$-formula $\phi(\bar z)$ given by 
\[\exists \bar x, \bar y \left[\psi(\bar x, \bar z) \land \chi(\bar y, \bar b) \land
p(\bar x, \bar y) = 0 \right],\]
then $\bar a' \forks_F \bar b$. 
Assume that $\phi(\bar a')$ holds and let $\bar \alpha', \bar \beta' \in \mathfrak F$ be witnesses for $\xbar$ and $\ybar$ respectively.

Then $\bar \beta' \in \EAgen{F\bar b}$ and, by the assumption on $\chi$,
the tuple $\bar \beta'$ is algebraically independent over $F$.
By the assumption on $\psi$ we have $\bar \alpha' \in \EAgen{F\bar a'}$.
So the condition that $p(\bar \alpha',\bar \beta')=0$ implies that
$\td(\bar \beta'/F\alpha') < \td(\bar \beta'/F)$. So $\EAgen{F\bar a'}$ and
$\EAgen{F\bar b}$ are not \ACF-independent over $F$, so $\bar a' \forks_F \bar
b$. So $\nforks$ has strong finite character, and hence satisfies all the conditions of Theorem~\ref{chernikov-ramsey-theorem}.
\end{proof}

\begin{appendices}
\section {Generalised stability for the category of existentially closed models}

In this appendix we give the technicalities of generalised stability theory for the category 
$\ECcat(T)$ of an inductive theory $T$. Everything here applies to positive
model theory as well (where atomic formulas need not have negations). These
results are well known for complete theories, but need some modifications to
work more generally. The main modifications are in the definitions, as we have
already done for $\tptwo$ and $\sopone$.

We fix an inductive theory $T$ with the JEP and let $\mathfrak M$ denote its
monster model. All subsets and tuples are assumed to come from $\mathfrak M$. To
simplify the notation we make no distinction between singletons and tuples. For
simplicity, the tuples are assumed to be finite, but this is not necessary.
The notation $a \equiv_A b$ means that $\etp(a/A) = \etp(b/A)$, or equivalently that
there is an automorphism of $\mathfrak M$ fixing $A$ pointwise and taking $a$ to
$b$. If $I$ is a linear order, then a sequence $(a_i)_{i \in I}$ is called {\em
indiscernible} over $A$ if for every $i_1 < \dots < i_n$ and $j_1 < \dots < j_n$
we have $a_{i_1}\dots a_{i_n} \equiv_A a_{j_1}\dots a_{j_n}$. The Ramsey method of
constructing indiscernibles fails in our setting, but the Erd\H{o}s-Rado method
works to give the following fact:
\begin{fact} [Lemma 3.1 of \cite{pillay-ec-forking}]
\label{erdos-rado-indiscernible}
Fix a set $A$. If $\kappa$ is sufficiently large and $(a_i)_{i < \kappa}$ is any
sequence, then there is a sequence $(b_i)_{i < \omega}$ indiscernible over $A$
such that for every $n < \omega$ there are $i_1 < \dots < i_n < \kappa$ with
$$b_1\dots b_n \equiv_A a_{i_1}\dots a_{i_n}.$$
\end{fact}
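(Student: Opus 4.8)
The plan is to extract the indiscernible sequence as a literal subsequence of $(a_i)_{i<\kappa}$, colouring increasing tuples by their existential type over $A$ and applying the Erd\H{o}s--Rado partition theorem in place of Ramsey. First I would record that all the $a_i$ share a common finite length $\ell$, so that for each $n$ the concatenation $a_{i_1}\dots a_{i_n}$ lives in the fixed finite number $\ell n$ of variables. Let $\lambda = 2^{|A|+|T|+\aleph_0}$; this bounds the number of existential types over $A$ in any fixed finite number of variables, since such a type is a set of existential formulas over $A$ and there are at most $|A|+|T|+\aleph_0$ of those. Crucially $\lambda$ does not depend on $\kappa$, so it can serve as a fixed palette of colours.

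Next I would choose $\kappa$ large enough that the partition relation $\kappa \to (\omega)^{<\omega}_\lambda$ holds --- that is, every colouring of $[\kappa]^{<\omega}$ by $\lambda$ colours admits an infinite homogeneous subset; by iterating the Erd\H{o}s--Rado theorem over the finite arities this holds for some $\kappa$ (a $\beth$-number) depending only on $\lambda$. Define a colouring $c$ of $[\kappa]^{<\omega}$ by $c(\{i_1 < \dots < i_n\}) = \etp(a_{i_1}\dots a_{i_n}/A)$, a colour drawn from the palette of size $\lambda$. By the partition relation there is a set $H = \{h_0 < h_1 < \dots\} \subseteq \kappa$ of order type $\omega$ on which $c$ is homogeneous, meaning $c$ is constant on $[H]^n$ for every $n$. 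Put $b_k = a_{h_k}$.

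The verification is then immediate. For indiscernibility, given $k_1 < \dots < k_n$ and $k'_1 < \dots < k'_n$, both $\{h_{k_1} < \dots < h_{k_n}\}$ and $\{h_{k'_1} < \dots < h_{k'_n}\}$ lie in $[H]^n$, so homogeneity gives $\etp(b_{k_1}\dots b_{k_n}/A) = \etp(b_{k'_1}\dots b_{k'_n}/A)$, i.e.\ $b_{k_1}\dots b_{k_n} \equiv_A b_{k'_1}\dots b_{k'_n}$, which is exactly indiscernibility over $A$ in the sense of equality of existential types. For the final clause, since $(b_k)$ is a genuine subsequence we have $b_1\dots b_n = a_{h_1}\dots a_{h_n}$ with $h_1 < \dots < h_n < \kappa$, so taking $i_j = h_j$ yields $b_1\dots b_n \equiv_A a_{i_1}\dots a_{i_n}$ (indeed with literal equality).

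The one point requiring care --- and the reason the statement is phrased with a long index set $\kappa$ rather than the $\omega$ one would use in a first-order argument --- is why Ramsey does not suffice. In the complete first-order setting one colours by the truth value of a single formula (two colours), iterates over all formulas, and realises the resulting Ehrenfeucht--Mostowski type by compactness; that last compactness step manufactures the indiscernible sequence. In the positive, existentially closed setting it is unavailable, since existential types are not closed under the negations needed to realise such a limit type, so I avoid it by colouring with the entire existential type at once. The price is that the colour set $\lambda$ is infinite, whence the length-$\omega$ Ramsey relation $\omega \to (\omega)^n_\lambda$ fails and a sufficiently long sequence must instead be fed into Erd\H{o}s--Rado. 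Thus the genuine inputs are precisely the bound on the number of existential types over $A$ and the observation that homogeneity for this colouring \emph{is} indiscernibility; everything else is the standard partition-calculus extraction.
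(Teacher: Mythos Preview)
The paper does not give its own proof of this statement; it is recorded as a Fact with a citation to Pillay. So there is nothing in the paper to compare against directly, and I assess your argument on its merits.

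There is a genuine gap. The partition relation you invoke, $\kappa \to (\omega)^{<\omega}_\lambda$ --- a single infinite $H$ on which the colouring is constant on $[H]^n$ for \emph{every} $n$ simultaneously --- is not a consequence of iterating Erd\H{o}s--Rado and is not provable in ZFC for any $\kappa$. The least cardinal satisfying even $\kappa \to (\omega)^{<\omega}_2$ is by definition the $\omega$-Erd\H{o}s cardinal, a genuine large cardinal. Iterating Erd\H{o}s--Rado across arities yields a nested chain $H_1 \supseteq H_2 \supseteq \cdots$ with $H_n$ homogeneous for arity $n$, but the sizes drop at each step and there is no way in ZFC to thread a single infinite set through all of them; your $(b_k)$ cannot be produced as a literal subsequence of the $(a_i)$.

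The fix is that your worry about compactness is misplaced, so the standard route is available after all. Build $I_0 \supseteq I_1 \supseteq \cdots$ with $I_n$ still large and $n$-indiscernible (this uses only Erd\H{o}s--Rado at each fixed arity, which is why $\kappa$ need only be a $\beth$-number), let $p_n$ be the common existential type over $A$ of increasing $n$-tuples from $I_n$, and realise the existential EM-type $\bigcup_n \bigcup_{i_1<\cdots<i_n} p_n(x_{i_1},\ldots,x_{i_n})$ by saturation of the monster. The key observation you are missing is that each $p_n$, being the existential type of an actual tuple in $\mathfrak{M}$, is \emph{maximal} (Fact~\ref{ec models fact}(iii)): any tuple realising all of $p_n$ therefore has existential type exactly $p_n$, with no need to impose negated formulas. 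Hence the realising sequence $(b_i)_{i<\omega}$ is automatically indiscernible, and by construction every $b_1\cdots b_n$ has the same existential type as some increasing tuple from the $a_i$. This is precisely why the statement only asks for $b_1\cdots b_n \equiv_A a_{i_1}\cdots a_{i_n}$ rather than demanding that the $b_i$ be drawn from among the $a_i$.
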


\begin{definition}
A partial existential type $\Sigma(x, b)$ {\em divides} over $A$ if there is a
sequence $(b_i)_{i < \omega}$ indiscernible over $A$ in $\etp(b/A)$ such
that $\bigcup_{i < \omega} \Sigma(x, b_i)$ is inconsistent.
\end{definition}

Note that if $\Sigma(x, b)$ divides over $A$, then by compactness there is an
existential formula $\phi(x, b) \in \Sigma(x, b)$ that divides. Dividing of
formulas can be explicitly characterised as follows.

\begin{lemma}
A formula $\phi(x, b)$ divides over $A$ if and only if there is $k < \omega$, an
existential formula $\psi(y_1, ..., y_k)$ and a sequence $(b_i)_{i < \omega}$
satisfying the following conditions:
\begin{enumerate}[(i)]
\item for each $i < \omega$, we have $b_i \equiv_A b$;
\item $\psi(y_1, \dots , y_k)$ implies that $\phi(x, y_1) \land \dots  \land
\phi(x, y_k)$ is inconsistent, i.e.
$$T \entails \lnot \exists x, y_1, \dots , y_k [\psi(y_1, \dots , y_k) \land
\phi(x, y_1) \land \dots  \land \phi(x, y_k)];$$
\item for each $i_1 < \dots  < i_k < \omega$, we have $\M \models \psi(b_{i_1}, \dots , b_{i_k})$.
\end{enumerate}
\end{lemma}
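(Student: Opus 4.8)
The statement is the positive-logic analogue of the classical first-order characterisation of dividing by a single formula. The plan is to prove the two implications separately: for the ``if'' direction I would inflate the given witnessing sequence and apply the Erd\H{o}s--Rado extraction of indiscernibles (Fact~\ref{erdos-rado-indiscernible}), and for the ``only if'' direction I would produce the required existential formula $\psi$ from the defining property of existentially closed models (Fact~\ref{ec models fact}(iv)); the latter is the only genuinely non-classical ingredient.

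For the ``if'' direction, assume $k$, $\psi(y_1,\dots,y_k)$ and $(b_i)_{i<\omega}$ satisfy (i)--(iii). First I would pass to a long sequence: for $\kappa$ large (to be fixed by Fact~\ref{erdos-rado-indiscernible}), the partial type in variables $(y_i)_{i<\kappa}$ asserting that each $y_i$ realises $\etp(b/A)$ and that $\psi(y_{i_1},\dots,y_{i_k})$ holds of every increasing $k$-subtuple is finitely satisfiable in $(b_i)_{i<\omega}$ via an order-preserving reindexing, hence realised in $\M$; by maximality of existential types realised in $\M$ (Fact~\ref{ec models fact}(iii)) the realisation $(b_i)_{i<\kappa}$ still satisfies (i) and (iii). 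Applying Fact~\ref{erdos-rado-indiscernible} yields $(c_i)_{i<\omega}$ indiscernible over $A$ such that every finite increasing subtuple of $(c_i)$ has the same type over $A$ as some increasing subtuple of $(b_i)_{i<\kappa}$. Then each $c_i \equiv_A b$, so $(c_i)$ lies in $\etp(b/A)$, and $\M \models \psi(c_{j_1},\dots,c_{j_k})$ for every increasing $k$-tuple, so by (ii) each such set $\{\phi(x,c_{j_1}),\dots,\phi(x,c_{j_k})\}$ is inconsistent; hence $\bigcup_{i<\omega}\phi(x,c_i)$ is inconsistent and $(c_i)$ witnesses that $\phi(x,b)$ divides over $A$.

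For the ``only if'' direction, let $(b_i)_{i<\omega}$ be indiscernible over $A$ in $\etp(b/A)$ with $\bigcup_{i<\omega}\phi(x,b_i)$ inconsistent. By compactness a finite subset is inconsistent, and by indiscernibility I may take it to be $\{\phi(x,b_1),\dots,\phi(x,b_k)\}$, with the same holding of every increasing $k$-tuple; this fixes $k$, and (i) holds since the sequence lies in $\etp(b/A)$. The formula $\theta(y_1,\dots,y_k) := \exists x\,[\phi(x,y_1)\wedge\dots\wedge\phi(x,y_k)]$ is existential and $\M \models \lnot\theta(b_1,\dots,b_k)$, as the set is not realised in the model $\M$. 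Since $\M$ is an existentially closed model of $T$, Fact~\ref{ec models fact}(iv) applied to $\theta$ and $(b_1,\dots,b_k)$ gives an existential $\psi(y_1,\dots,y_k)$ with $\M \models \psi(b_1,\dots,b_k)$ and $T \entails \lnot\exists y_1\dots y_k\,(\theta(y_1,\dots,y_k)\wedge\psi(y_1,\dots,y_k))$, i.e.\ $T \entails \lnot\exists x\,y_1\dots y_k\,(\psi(\bar y)\wedge\bigwedge_{j\le k}\phi(x,y_j))$, which is (ii). Finally (iii) holds because $\psi$ is existential and parameter-free, so indiscernibility of $(b_i)$ over $A$ transports $\psi(b_1,\dots,b_k)$ to every increasing $k$-tuple $b_{i_1}\dots b_{i_k} \equiv_A b_1\dots b_k$.

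\textbf{Main obstacle.} The only real departure from the first-order argument is constructing $\psi$: in a complete first-order theory one simply takes $\psi := \lnot\theta$, but here $\lnot\theta$ is not existential, so one must route through Fact~\ref{ec models fact}(iv). The only other care needed is the bookkeeping in the ``if'' direction --- stretching the given $\omega$-sequence to a length fit for Fact~\ref{erdos-rado-indiscernible} and checking that the extracted indiscernible sequence still meets (i) and (iii), so that (ii) can force inconsistency.
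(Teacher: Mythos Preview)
Your proposal is correct and follows essentially the same route as the paper: stretch the sequence by compactness and extract indiscernibles via Fact~\ref{erdos-rado-indiscernible} for the ``if'' direction, and for the ``only if'' direction use compactness plus indiscernibility to isolate $k$, then invoke Fact~\ref{ec models fact}(iv) to obtain the existential $\psi$ and transport it along the indiscernible sequence. The paper's write-up is terser (it leaves the appeal to Fact~\ref{ec models fact}(iv) implicit in the phrase ``therefore there must be an existential formula $\psi$''), but the argument is the same.
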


\begin{proof}
If $(b_i)_{i < \omega}$ is the indiscernible sequence witnessing the dividing,
then for some $k < \omega$ and $i_1 < \dots < i_k < \omega$ the formula $\phi(x,
b_{i_1}) \land \dots  \land \phi(x, b_{i_k})$ is inconsistent. Therefore there
must be an existential formula $\psi(y_1, \dots , y_k)$, satisfied by $b_{i_1},
\dots , b_{i_k}$, that implies this. By indiscernibility of $(b_i)_{i <
\omega}$, the formula $\psi$ is satisfied for every $i_1 < \dots < i_k <
\omega$.

Conversely assume that the conditions are satisfied. Then by compactness we can
have an arbitrarily long sequence $(b_i)_{i < \kappa}$ satisfying the same
property. Hence by Fact \ref{erdos-rado-indiscernible} we can extract an
indiscernible sequence witnessing the dividing. 
\end{proof}

\begin{definition}
Let $T$ be an inductive theory with JEP. Then $\ECcat(T)$ is called {\em simple} if dividing has local
character. That is, for every $A$ and $b$ there is a subset $A_0 \subseteq A$ of
cardinality at most $|T|$ such that $\etp(b/A)$ does not divide over $A_0$.
\end{definition}

\begin{remark}
This definition of simplicity is due to \cite{ben-yaacov-simplicity}, which
develops an independence relation based on non-dividing for simple theories. The
approach of \cite{pillay-ec-forking} is slightly different. The notion of
dividing there is extended to forking and simplicity is defined as the local
character of forking. This condition, however, implies that forking is
equivalent to dividing and therefore also implies (but is stronger than)
simplicity in the above sense.
\end{remark}

\begin{proposition}\label{simple implies TP2}
If $\ECcat(T)$ is simple then no existential formula has $\tptwo$.
\end{proposition}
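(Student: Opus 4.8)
The plan is to argue by contraposition: assuming some existential formula $\phi(x,y)$ has $\tptwo$ with respect to $\ECcat(T)$, I will produce a set $A$ in $\M$ and an existential type $p$ over $A$ that divides over \emph{every} subset of $A$ of cardinality $\le|T|$, contradicting the local character of dividing that defines simplicity. So fix $\phi$, an existential formula $\psi(y_1,y_2)$, and an array $(\bar a_{i,j})_{i,j<\omega}$ witnessing $\tptwo$, so that every path $\{\phi(x,\bar a_{i,\sigma(i)}):i<\omega\}$ is consistent, $T\vdash\lnot\exists x\,y_1\,y_2\,[\psi(y_1,y_2)\land\phi(x,y_1)\land\phi(x,y_2)]$, and $\M\models\psi(\bar a_{i,j},\bar a_{i,k})$ whenever $j\neq k$. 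The first step is to upgrade this to a \emph{mutually indiscernible} array $(\bar a_{i,j})_{i<\lambda,\,j<\omega}$ with $\lambda=|T|^+$ many rows, where each row $(\bar a_{i,j})_{j<\omega}$ is indiscernible over the union of the other rows, whose entries are pairwise distinct, and which still witnesses $\tptwo$ in the sense that all finite sub-paths (over distinct rows) are consistent and $\M\models\psi(\bar a_{i,j},\bar a_{i,k})$ for $j\neq k$ (condition (ii) on $\psi$ is a property of the formulas and is untouched). This is the one point where the positive-logic setting requires care: the usual Ramsey-style extraction of a mutually indiscernible array is unavailable, so one runs iterated applications of Erd\H{o}s--Rado (Fact~\ref{erdos-rado-indiscernible}) starting from a sufficiently broad array, obtaining an $\omega$-indexed sequence of rows which is then re-stretched to $\lambda$ rows by compactness; everything that must survive --- consistency of finite sub-paths, the relation $\psi(\bar a_{i,j},\bar a_{i,k})$, and $1$-indiscernibility of each row over the complement --- is captured by existential formulas because $\phi$ and $\psi$ are, so it is preserved both by the extraction and by the stretching of the Ehrenfeucht--Mostowski type.

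Granting such an array, the rest is a routine transcription of the first-order argument. By saturation of $\M$, pick $b$ realising the $0$-th path, i.e.\ $\M\models\phi(b,\bar a_{i,0})$ for all $i<\lambda$, and set $A=\bigcup_{i<\lambda}\{\bar a_{i,j}:j<\omega\}$ and $p=\etp(b/A)$. Let $A_0\subseteq A$ with $|A_0|\le|T|$. Since $\lambda=|T|^+>|A_0|$ and the entries of the array are distinct, there is a row index $i^*<\lambda$ with $A_0\cap\{\bar a_{i^*,j}:j<\omega\}=\emptyset$; then $A_0$ is contained in the union of the rows other than $i^*$, so the $i^*$-th row $(\bar a_{i^*,j})_{j<\omega}$ is an indiscernible sequence over $A_0$, and in particular every $\bar a_{i^*,j}$ realises $\etp(\bar a_{i^*,0}/A_0)$. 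Moreover $\{\phi(x,\bar a_{i^*,j}):j<\omega\}$ is inconsistent, indeed already $2$-inconsistent: for $j\neq k$ we have $\M\models\psi(\bar a_{i^*,j},\bar a_{i^*,k})$, and condition (ii) of the $\tptwo$ definition then yields $T\vdash\lnot\exists x\,[\phi(x,\bar a_{i^*,j})\land\phi(x,\bar a_{i^*,k})]$. Hence the formula $\phi(x,\bar a_{i^*,0})$, which lies in $p$, divides over $A_0$ as witnessed by the sequence $(\bar a_{i^*,j})_{j<\omega}$; and since a partial type divides over a set as soon as one of its formulas does (the relevant extraction here again only needs Erd\H{o}s--Rado together with the $2$-inconsistency just noted), $p$ itself divides over $A_0$. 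As $A_0$ was an arbitrary subset of $A$ of cardinality at most $|T|$, this contradicts the definition of simplicity, so no existential formula has $\tptwo$.

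The main obstacle is thus entirely in the first step --- obtaining a mutually indiscernible array in the absence of Ramsey's theorem --- and amounts to checking that the Erd\H{o}s--Rado extraction can be carried out (at the cost of a large starting cardinal and an extra compactness step to recover $|T|^+$ rows) while preserving the relevant finite, existential combinatorial data of the $\tptwo$-pattern. Once that array is in hand, the dividing argument above is the standard first-order proof that simplicity implies $\tptwo$-freeness, read off through the definition of dividing given earlier.
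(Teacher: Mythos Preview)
Your proof is correct, but it takes a different route from the paper's.  You build a \emph{mutually indiscernible} array $(\bar a_{i,j})_{i<\lambda,\,j<\omega}$, then argue that any small $A_0$ misses some row, so that row is $A_0$-indiscernible and witnesses dividing.  As you note, the only nontrivial work lies in extracting mutual indiscernibility via iterated Erd\H{o}s--Rado in the positive setting.

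The paper sidesteps exactly this obstacle.  Starting from a $\tptwo$-array $(\bar a_{i,j})_{i,j<\kappa}$ with $\kappa$ large, it builds a single path $(b_i)_{i<\lambda}$ by a pigeonhole argument: at stage $i$, look at the types $\etp(\bar a_{i,j}/\{b_k:k<i\})$ as $j$ ranges over $\kappa$; for $\kappa$ large enough one such type repeats infinitely often, so choose $b_i=\bar a_{i,\sigma(i)}$ realising such a repeating type.  Then the infinitely many $\bar a_{i,j}$ of the same type over $\{b_k:k<i\}$, together with the uniform $\psi$-inconsistency, already satisfy the criterion of the paper's Lemma characterising dividing (no indiscernibility needed at this point).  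Hence $\phi(x,b_i)$ divides over $\{b_k:k<i\}$, and for any $A_0\subseteq\{b_i:i<\lambda\}$ of size $<\lambda$ regularity of $\lambda$ gives some $i$ with $A_0\subseteq\{b_k:k<i\}$, whence $\etp(b/\{b_i:i<\lambda\})$ divides over $A_0$.

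So both arguments are valid.  Yours is the standard first-order proof and is conceptually clean once the array is in hand; the paper's is more elementary in that it never needs mutual indiscernibility, only a type-repetition count, and thereby avoids the one step you flagged as delicate.
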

\begin{proof}
Assume that the existential formula $\phi(x, y)$, has $\tptwo$ and let $\psi(y_1, y_2)$ be
the formula witnessing that $\phi(x, y_1) \land \phi(x, y_2)$ is inconsistent.
Fix a regular cardinal $\lambda$. We will construct a type over a set of
cardinality $\lambda$ that divides over every subset of smaller cardinality.

By compactness, for arbitrarily large $\kappa$ we can find parameters
$(a_{i,j})_{i,j < \kappa}$ such that 
\begin{enumerate}[(i)]
\item for every $\sigma \in \kappa^\kappa$ the set $\{\phi(x, a_{i,\sigma(i)}) :
i < \kappa\}$ is consistent, and
\item for every $i, j , k < \kappa$, if $j \neq k$, then $\M \models \psi(a_{i,j},
a_{i,k})$.
\end{enumerate}

Construct a function $\sigma \in \kappa^\lambda$ by induction on $i$ and take
$b_i = a_{i, \sigma(i)}$. Assume that $\sigma|_i$ has been constructed. Consider
the sequence 
$$(\etp(a_{i,j}/\{b_k : k < i\}))_{j < \kappa}.$$
If $\kappa$ is large enough, one of these types has to repeat infinitely often.
Pick $\sigma(i)$ such that $\etp(a_{i,\sigma(i)}/\{b_k : k < i\})$ appears
infinitely often and set $b_i = a_{i,\sigma(i)}$. Then $\phi(x, b_i)$ divides
over $\{b_k : k < i\}$. Now let $b$ realise $\{\phi(x, b_i) : i < \lambda\}$.
Then $\etp(b/\{b_i : i < \lambda\})$ divides over every subset of smaller
cardinality.
\end{proof}

Next we turn our attention to $\nsopone$. We prove Proposition
\ref{chernikov-ramsey-5.2}, which
is the generalisation of Proposition 5.2 of
\cite{chernikov-ramsey-tree-properties} to our setting, and which is used in
Theorem \ref{chernikov-ramsey-theorem}. The proof is essentially the same. The
only differences that are not cosmetic are sidestepping the use of Ramsey's
Theorem via Fact \ref{erdos-rado-indiscernible}, and the use of Skolem
functions.

If $A$ is a subset of an existentially closed model $M$, then by $\edcl(A)$ we
denote the set of all the elements of $M$ which are pointwise existentially
definable over $A$. As with $\eacl$, the operator $\edcl$ is a closure operator.
We first show how to add Skolem functions in our setting. 

\begin{lemma}
\label{skolem-lemma}
Let $T$ be an inductive theory in the language $L$. Then there is an expansion
$L'$ of $L$ and an inductive theory $T'$ extending $T$ such that for every
existentially closed model $M' \models T'$ the following hold
\begin{enumerate}[(i)]
\item $M'|_L$ is an existentially closed model of $T$;
\item if $A \subseteq M'$, then $\edcl(A)$ is an existentially closed model of
$T'$.
\end{enumerate}
\end{lemma}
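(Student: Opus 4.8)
The plan is to produce a Skolemisation adapted to existentially closed models. First I would fix, for every pair consisting of an existential formula $\exists \bar y\,\theta(\bar x,\bar y)$ (with $\theta$ quantifier-free) in the language $L$, a new function symbol $f_{\theta}$ of the appropriate arity. Let $L'$ be $L$ together with all these symbols (iterating this construction $\omega$ times so that new formulas built using the new symbols are also covered — i.e.\ take $L' = \bigcup_n L_n$ where $L_0 = L$ and $L_{n+1}$ adds Skolem symbols for all existential $L_n$-formulas). The subtlety compared to the classical case is that we cannot simply add the defining axiom $\forall \bar x\,[\exists \bar y\,\theta(\bar x,\bar y) \to \theta(\bar x, f_{\theta}(\bar x))]$ and expect existentially closed models of the expanded theory to restrict to existentially closed models of $T$: adding universal axioms is harmless for the ``restricts to a model of $T$'' part, but we must check existential closedness survives and, more delicately, that $\edcl$-closed sets become models.

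The key steps, in order, are as follows. Step one: let $T'$ be $T$ together with, for each Skolem symbol $f_{\theta}$, the sentence $\forall \bar x\,[\,(\exists \bar y\,\theta(\bar x,\bar y)) \to \theta(\bar x, f_{\theta}(\bar x))\,]$. This is a set of $\forall\exists$-sentences (indeed $\forall$-sentences modulo the obvious rewriting), so $T'$ is inductive. Step two: check that $T'$ is consistent and that every model of $T$ expands to a model of $T'$ — this is the usual argument, choosing values of $f_{\theta}$ using the axiom of choice, and sending tuples outside the domain of definition anywhere (say to a fixed element). Combined with Fact~\ref{companion-fact}, since the universal consequences of $T'$ in the language $L$ coincide with those of $T$ (any model of $T$ has an $L'$-expansion modelling $T'$, and any model of $T'$ restricts to a model of $T$), the existentially closed models of $T'$ restrict to existentially closed models of $T$: given $M' \models T'$ existentially closed and an $L$-extension $M'|_L \subseteq B \models T$ realising an existential $L$-formula over $M'|_L$, expand $B$ to a model of $T'$, note this is an $L'$-extension of $M'$ (the Skolem axioms are universal, hence we must be slightly careful — the inclusion $M' \subseteq B'$ need not respect the Skolem functions). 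This last point is the genuine obstacle and I address it in step three.

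Step three (the main obstacle): to get $M' \subseteq B'$ as an $L'$-embedding, one does \emph{not} extend an arbitrary model of $T$, but rather works with the chain-building characterisation. Given the existential $L$-formula $\exists \bar z\,\phi(\bar z, \bar a)$ with $\bar a \in M'$, true in some $L$-extension $B \models T$, we want it true in $M'|_L$. Build an $L'$-structure extending $M'$ as follows: first take an $L$-extension of $M'|_L$ realising the formula, then freely close under the Skolem functions, interpreting $f_{\theta}$ on new tuples by the $T'$-axioms and keeping the old interpretation on tuples already in $M'$; iterating produces an $L'$-extension $B' \models T'$ of $M'$ (as $L'$-structures) with $B'|_L \supseteq M'|_L$ satisfying the formula. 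Now pass to an existentially closed $L'$-extension $B'' \models T'$ of $B'$; since $M'$ is existentially closed and $\exists \bar z\,\phi(\bar z,\bar a)$ holds in $B''$, it holds in $M'$, proving (i). For (ii): if $A \subseteq M'$, then $\edcl(A)$ is closed under every Skolem function $f_{\theta}$ (because $f_{\theta}(\bar d)$ is existentially definable over $\bar d$, as $f_{\theta}$ is a term), hence $\edcl(A)$ is an $L'$-substructure of $M'$; it satisfies each universal Skolem axiom of $T'$ since $M'$ does; and it satisfies the $\forall\exists$-axioms of $T$ because, given a tuple in $\edcl(A)$ and the relevant existential $L$-witness formula, the Skolem term $f_{\theta}$ applied to that tuple already lies in $\edcl(A)$ and witnesses it. A short additional argument using Fact~\ref{ec models fact}(iv) — that the relevant non-solutions are ruled out by existential formulas over $A$ — upgrades ``model of $T'$'' to ``existentially closed model of $T'$'': any existential $L'$-formula false in $\edcl(A)$ over parameters from $\edcl(A)$ is false in $M'$ (since $\edcl(A)$ is an $L'$-substructure closed under Skolem functions, realisation of existential formulas transfers both ways between $\edcl(A)$ and $M'$), and $M'$ being existentially closed supplies the required companion existential formula. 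I expect the bookkeeping around ``$L'$-embedding versus $L$-embedding'' in step three to be where all the real content lies; the rest is routine.
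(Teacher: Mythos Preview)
There is a genuine gap in your approach to (i). By adding total Skolem function symbols with the axiom $\forall \bar x[\exists \bar y\,\theta(\bar x,\bar y)\to\theta(\bar x,f_\theta(\bar x))]$ and iterating so that every existential $L'$-formula is covered, you make $T'$ model complete: modulo $T'$, the formula $\exists \bar y\,\theta(\bar x,\bar y)$ is equivalent to the quantifier-free $\theta(\bar x,f_\theta(\bar x))$. Hence \emph{every} model of $T'$ is an existentially closed model of $T'$. Combined with your own Step~2 observation that every model of $T$ expands to a model of $T'$, condition~(i) would then assert that every model of $T$ is existentially closed, which is false in general. The paper flags exactly this at the start of its proof.

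The specific failure in Step~3 is the claim that one can expand the $L$-extension $B$ to a model $B'\models T'$ with $M'\subseteq B'$ as $L'$-structures while ``keeping the old interpretation on tuples already in $M'$''. Take $\bar a\in M'$ and quantifier-free $\theta$ with $\exists y\,\theta(\bar a,y)$ false in $M'$ but true in $B$. In $M'$ the value $f_\theta(\bar a)$ is some default element not satisfying $\theta(\bar a,-)$. If you keep this value in $B'$ then $B'$ violates the Skolem axiom for $\theta$; if you change it then $M'\hookrightarrow B'$ is not an $L'$-embedding. There is no way around this with total function symbols.

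The paper's construction avoids this by using \emph{relation} symbols $R_\phi(\bar x,y)$ for the graphs of partial Skolem functions, with axioms making $R_\phi$ functional and nonempty exactly where $\exists y\,\phi(\bar x,y)$ holds. When $\exists y\,\phi(\bar a,y)$ becomes true in an $L$-extension $N\supseteq M'|_L$, one may set $R_\phi(\bar a,b)$ for some new $b\in N\setminus M'$ without touching $R_\phi$ on pairs from $M'$; thus $N$ expands to a model of $T'$ that genuinely extends $M'$ as an $L'$-structure, and (i) goes through. Your argument for (ii) is essentially correct and adapts to the relational setting with only cosmetic changes.
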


\begin{proof}
If we add function symbols witnessing existential quantifiers and the
corresponding Skolem axioms in the usual way, then the resulting theory $T'$
will be model complete. So every model of $T'$ will be existentially closed,
and the first condition may fail.

We instead add {\em partial} Skolem functions through relation symbols. More
specifically, for each existential formula $\phi(\bar x, y)$ and each partition
of its free variables into a tuple $\bar x$ and a single variable $y$, we add a
new relation symbol $R_\phi(\bar x, y)$ together with the following axioms:
\begin{enumerate}[(i)]
\item $\forall \bar x, y [R_\phi(\bar x, y) \to \phi(\bar x, y)]$;
\item $\forall \bar x, y_1, y_2 [R_\phi(\bar x, y_1) \land R_\phi(\bar x, y_2)
\to y_1 = y_2]$;
\item $\forall \bar x [\exists y \phi(\bar x, y) \liff \exists y R_\phi(\bar x,
y)]$.
\end{enumerate}
We iterate this process the usual way and take the limit. Let $L'$ denote this
language and $T'$ denote the resulting theory. 

In any model $M'$ of $T'$, the symbol $R_\phi$ is interpreted by a partial
function witnessing $\phi(\bar x, y)$. So if $N$ is an extension of $M'|_L$,
then this partial function can be extended to witness $\phi(\bar x, y)$ in $N$.
Therefore we can interpret the symbols of $L' \minus L$ in such a way that the
resulting structure $N'$ is a model of $T'$ and an extension of $M'$. (This may
not be possible if we add function symbols instead.) Thus, if $M'$ is
existentially closed, any existential formula $\psi(\bar x)$ over $M'$ witnessed
by elements of $N'$ is already witnessed by elements of $M'$. In particular,
taking $\psi$ to be an $L$-formula, we see that $M'|_L$ must be existentially
closed too.

For the second condition, let $A \subseteq M'$ and $B = \edcl(A)$. Let
$\phi(\bar b, y)$ be an existential formula over $B$ and $M' \models \exists y
\phi(\bar b, y)$.  We need to find an element witnessing $y$ in $B$. Then $B$
will be a substructure of $M'$ and so a model of $T'_\forall$ and by Facts
\ref{ec models fact} and \ref {companion-fact}, also an existentially closed
model of $T'$. Since $B = \edcl(A)$, there is an existential formula $\psi(\bar
a, y)$ over $A$ such that $M' \models \forall y (\phi(\bar b, y) \liff \psi(\bar
a, y))$. Then there is $b'$ such that $M' \models R_\psi(\bar a, b')$. But this
$b'$ is $\exists$-definable over $A$ and so is in $B$.
\end{proof}

We write $a \nforks^u_C b$ to mean that $\etp(a/Cb)$ is finitely satisfiable in
$C$. Again, we work in a monster model $\M$ of $T$.

\begin{proposition}
\label{chernikov-ramsey-5.2}
If $\phi(x, y)$ has $\sopone$ with the inconsistency witnessed by
$\psi(y_1,y_2)$, then there is an existentially closed model $M$ and tuples
$c_1, c_2, b_1, b_2$ such that $c_1 \nforks^u_M c_2$, $c_1 \nforks^u_M b_1$,
$c_2 \nforks^u_M b_2$, $b_1 \equiv_M b_2$, and such that 
$$\M \models \phi(b_1, c_1) \land \phi(b_2, c_2) \land \psi(c_1,c_2).$$
\end{proposition}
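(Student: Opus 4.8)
The plan is to mimic the proof of Proposition~5.2 of \cite{chernikov-ramsey-tree-properties}, working from a witness to $\sopone$ for $\phi$. So suppose $\phi(x,y)$ has $\sopone$ with inconsistency witnessed by $\psi(y_1,y_2)$, and fix a tree of parameters $(\bar a_\eta : \eta \in 2^{<\omega})$ in an amalgamation base $A$ satisfying conditions (i)--(iii) of the definition. First I would pass to a Skolemised expansion $T'$ of $T$ as in Lemma~\ref{skolem-lemma}, so that existentially closed models of $T'$ restrict to existentially closed models of $T$ and $\edcl$ of any set is an existentially closed model; this is what lets us produce the model $M$ we need at the end. Work inside a monster model of $T'$ (which also serves as a monster model of $T$).

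Next I would extract indiscernibility along the tree. Using Fact~\ref{erdos-rado-indiscernible} in place of Ramsey's theorem (this is the main cosmetic change from the first-order argument), I would build from the tree a sufficiently indiscernible array: concretely, take the sequence of rows $(\bar a_{\sigma|_n})_n$ along a branch and a sequence of branches, and extract a mutually-indiscernible-style configuration so that the ``spread-out'' tree structure persists. The key consequences to retain are: (a) for the branch $\sigma = 0^\omega$ the set $\{\phi(x,\bar a_{\sigma|_n}) : n<\omega\}$ is consistent, realised by some $b$; (b) along any branch the rows form an indiscernible sequence over the appropriate base; and (c) the ``$\eta^\frown 1$ versus later $\nu$'' instances of $\psi$ survive. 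Let $M$ be the $\edcl$ (in the $T'$-sense) of the base parameters of this configuration; by Lemma~\ref{skolem-lemma}(ii) this is an existentially closed model, and it is small.

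Now set $c_1, c_2$ to be two of the ``sibling'' nodes at some level sitting above $M$ in the spread-out tree — e.g. $c_1 = \bar a_{\nu^\frown 0 \, 0^k}$-type data and $c_2 = \bar a_{\nu^\frown 1}$-type data, chosen so that $\psi(c_1,c_2)$ holds by clause (iii) — and let $b_1$ realise the path-type through $c_1$ and $b_2$ realise the path-type through $c_2$, so that $\phi(b_1,c_1)\land\phi(b_2,c_2)\land\psi(c_1,c_2)$ holds and $b_1 \equiv_M b_2$ (the two paths have the same type over $M$ by indiscernibility of the tree). The finite-satisfiability statements $c_1 \nforks^u_M c_2$, $c_1 \nforks^u_M b_1$, $c_2 \nforks^u_M b_2$ are then read off from the indiscernibility/spread-out structure exactly as in \cite{chernikov-ramsey-tree-properties}: each relevant type is finitely satisfied in $M$ because $M$ contains a cofinal supply of nodes realising the needed finite patterns. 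I expect the main obstacle to be bookkeeping: setting up precisely the right notion of ``spread-out, indiscernible tree'' that is both (1) obtainable via Fact~\ref{erdos-rado-indiscernible} rather than Ramsey, and (2) strong enough to deliver simultaneously the consistency of the path-type, the $\psi$-inconsistency along siblings, and all three finite-satisfiability-over-$M$ conditions. Once the combinatorial configuration is in hand, the verification of each listed property is routine and follows \cite{chernikov-ramsey-tree-properties} line by line.
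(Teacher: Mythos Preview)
Your high-level strategy matches the paper's: pass to a Skolemised expansion via Lemma~\ref{skolem-lemma}, replace Ramsey by Fact~\ref{erdos-rado-indiscernible}, and otherwise follow \cite{chernikov-ramsey-tree-properties}. However, your description of the combinatorial core departs from what both the paper and Chernikov--Ramsey actually do, and the departure is not merely cosmetic.

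You plan to extract a ``spread-out, indiscernible tree'' or ``mutually-indiscernible-style configuration'' and then read everything off that object. Fact~\ref{erdos-rado-indiscernible} gives only \emph{linear} indiscernible sequences; obtaining genuine tree-indiscernibles in the positive/existentially-closed setting is not available and is not what the paper does. The paper (following Chernikov--Ramsey's own Proposition~5.2) avoids tree indiscernibles entirely: by a pigeonhole argument inside a very tall tree $2^{<\kappa}$ it inductively builds a sequence of pairs $(\eta_i,\nu_i)_{i<\lambda}$ with $\nu_i^\frown 0 \preceq \eta_i$ and $a_{\eta_i}\equiv a_{\nu_i}$ over all earlier pairs. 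One then fixes a single realisation $b_2$ of $\{\phi(x,a_{\eta_i}):i<\lambda\}$ and applies Fact~\ref{erdos-rado-indiscernible} \emph{over $b_2$} to extract a linear indiscernible sequence $(e_id_i)_{i<\omega+2}$. The model is $M=\edcl(\{e_nd_n:n<\omega\})$, and one takes $c_1=d_\omega$, $c_2=e_{\omega+1}$; the three $\nforks^u_M$ conditions follow directly from linear indiscernibility. The element $b_1$ is \emph{not} a second path-realisation as you suggest: one shows $e_\omega\equiv_M d_\omega$ (using the built-in type equality $a_{\eta_i}\equiv a_{\nu_i}$) and takes $b_1$ to be the image of $b_2$ under an automorphism sending $e_\omega$ to $d_\omega=c_1$. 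So the ``main obstacle'' you anticipate is not bookkeeping for tree indiscernibles but rather the pigeonhole step producing the pairs $(\eta_i,\nu_i)$; once that is in hand, only linear Erd\H os--Rado is needed.
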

\begin{proof}
By Lemma \ref{skolem-lemma} we may assume that $\edcl(A)$ is an existentially
closed model of $T$ for every subset $A$ of $\M$. 

Let $\lambda$ be a cardinal. By compactness, for arbitrarily large $\kappa$ we
can find parameters $(a_\eta)_{\eta \in 2^{<\kappa}}$ such that
\begin{enumerate}[(i)]
\item for every $\sigma \in 2^\kappa$ the set $\{\phi(x, a_{\sigma|_i}) : i <
\kappa\}$ is consistent;
\item for every $\eta, \nu \in 2^{<\kappa}$ such that $\eta^\frown0 \preceq
\nu$, $\M \models \psi(a_{\eta^\frown1},a_\nu)$.
\end{enumerate}

Construct a sequence $(\eta_i, \nu_i)_{i < \lambda}$ inductively. Assume
$(\eta_j, \nu_j)_{j < i}$ has been constructed. Let $\eta = \bigcup_{j < i}
\eta_i$. If $\kappa$ is large enough, there are $\alpha < \beta < \kappa$ such
that
$$a_{\eta^\frown0^{\alpha \frown}1} \equiv_{\{a_{\eta_j} a_{\nu_j} : j < i\}}
a_{\eta^\frown0^{\beta \frown}1}.$$
Define $\nu_i = \eta^\frown0^{\alpha \frown}1$ and $\eta_i =
\eta^\frown0^{\beta \frown}1$.

Since $\eta_i$ extends $\eta_j$ for $j < i$, there is $b_2$ that realises
$\{\phi(x, a_{\eta_i}) : i < \lambda\}$. If $\lambda$ is large enough, by Fact
\ref{erdos-rado-indiscernible}, there is a sequence $(e_i,d_i)_{i < \omega+2}$
indiscernible over $b_2$, such that for every $n < \omega$ there are $i_1 <
\dots < i_n < \lambda$ with
$$e_1d_1\dots e_nd_n \equiv_{b_2} a_{\eta_{i_1}} a_{\nu_{i_1}} \dots  a_{\eta_{i_n}} a_{\nu_{i_n}}.$$ 
Let $M = \edcl(\{e_nd_n : n < \omega\})$, $c_1 = d_\omega$ and $c_2 =
e_{\omega+1}$. Then $c_1 \nforks^u_{\{e_nd_n : n < \omega\}} c_2$ and $c_2
\nforks^u_{\{e_nd_n : n < \omega\}} b_2$ by indiscernibility. It follows that
$c_1 \nforks^u_M c_2$, $c_2 \nforks^u_M b_2$ and $\M \models \phi(b_2, c_2)$.
Also note that $c_1c_2 = d_\omega e_{\omega+1} \equiv a_{\nu_0} a_{\eta_1}$. But
for some $\alpha$, we have $\nu_0 = {0^\alpha}^\frown1$ and $0^{\alpha+1} \prec
\eta_1$. So $\M \models \psi(c_1,c_2)$.

It remains to find an appropriate $b_1$. We claim that $e_\omega \equiv_M
d_\omega$. Indeed for a fixed $n < \omega$ find $i_0 < \dots  i_n < \lambda$ such
that $e_0d_0\dots e_nd_ne_\omega d_\omega \equiv_{b_2}
a_{\eta_{i_0}}a_{\nu_{i_0}}\dots a_{\eta_{i_n}}a_{\nu_{i_n}}$. It now follows that
\begin{align*}
e_0d_0\dots e_{n-1}d_{n-1}e_\omega & \equiv
a_{\eta_{i_0}}a_{\nu_{i_0}}\dots a_{\eta_{i_{n-1}}}a_{\nu_{i_{n-1}}}a_{\eta_{i_n}}
\\
& \equiv
a_{\eta_{i_0}}a_{\nu_{i_0}}\dots a_{\eta_{i_{n-1}}}a_{\nu_{i_{n-1}}}a_{\nu_{i_n}}
& \text {by the choice of $\eta_{i_n}$ and $\nu_{i_n}$} \\
& \equiv
e_0d_0\dots e_{n-1}d_{n-1}d_\omega.
\end{align*}
Now let $f \in \Aut(\mathfrak M/M)$ such that $f(e_\omega) = d_\omega = c_1$.
Let $b_1 = f(b_2)$. Then $b_1 \equiv_M b_2$ and $\M \models \phi(b_2, e_\omega)$, 
so we also have $\M \models \phi(b_1, c_1)$. Finally $e_\omega \nforks^u_{\{e_nd_n : n < \omega\}}
b_2$ by indiscernibility. Therefore $c_1 \nforks^u_{\{e_nd_n : n < \omega\}}
b_1$ and so $c_1 \nforks^u_M b_1$.
\end{proof}

\end{appendices}

\bibliographystyle{plainnat}
\bibliography{ref}

\end{document}